\DeclareMathSymbol{\leqslant}{\mathalpha}{AMSa}{"36} 
\DeclareMathSymbol{\geqslant}{\mathalpha}{AMSa}{"3E} 
\renewcommand{\leq}{\;\leqslant\;}                   
\renewcommand{\geq}{\;\geqslant\;}                   
\newtheorem{Th}{Theorem}
\newtheorem{Le}[Th]{Lemma}
\newtheorem{Pro}[Th]{Proposition}
\newtheorem{Def}[Th]{Definition}
\newtheorem{Cor}[Th]{Corollary}
\newcommand{\cA}{\ensuremath{\mathcal A}}
\newcommand{\cD}{\ensuremath{\mathcal D}}
\newcommand{\cE}{\ensuremath{\mathcal E}}
\newcommand{\cF}{\ensuremath{\mathcal F}}
\newcommand{\cH}{\ensuremath{\mathcal H}}
\newcommand{\cO}{\ensuremath{\mathcal O}}
\newcommand{\cP}{\ensuremath{\mathcal P}}
\newcommand{\cR}{\ensuremath{\mathcal R}}
\newcommand{\cX}{\ensuremath{\mathcal X}}
\newcommand{\bbD}{{\ensuremath{\mathbb D}} }
\newcommand{\bbE}{{\ensuremath{\mathbb E}} }
\newcommand{\bbN}{{\ensuremath{\mathbb N}} }
\newcommand{\bbP}{{\ensuremath{\mathbb P}} }
\newcommand{\bbR}{{\ensuremath{\mathbb R}} }
\newcommand{\Om}{\Omega}
\newcommand{\E}{\bbE}
\newcommand{\N}{\bbN}
\newcommand{\Ne}{\bbN^{\ast}}
\newcommand{\R}{\bbR}
\newcommand{\bbd}{\mathbf{d}}
\newcommand{\bbbd}{\bar{\bbd}}
\newcommand{\bbbD}{\bar{\bbD}}
\newcommand{\tN}{\tilde{N}}
\newcommand{\hOm}{\hat{\Om}}
\newcommand{\hcA}{\hat{\cA}}
\newcommand{\hbbP}{\hat{\bbP}}
\newcommand{\hE}{\hat{\E}}
\newcommand{\crea}{\varepsilon^+}
\newcommand{\anni}{\varepsilon^-}
\newcommand{\sbbD}{\underline{\bbD}}
\newcommand{\1}{\mathbf{1}}
\newcommand{\bK}{\bar{K}}
\newcommand{\LDP}[1]{{\cH}_{\bbD^{#1},\cP}}
\newcommand{\LDdP}[1]{{\cH}_{(\bbD\ho{\bbd})^{#1},\cP}}
\newcommand{\bLDdP}[1]{{\cH}_{(\widetilde{\bbD\ho\bbd })^{#1},\cP}}
\newcommand{\bLDP}[1]{{\cH}_{\bbbD^{#1},\cP}}
\newcommand{\ho}{\hat{\otimes}}
\title{Iteration of the lent particle method for\\existence of smooth densities of Poisson functionals }\date{}
\author{Nicolas BOULEAU and Laurent DENIS}
\begin{document}
\maketitle

\date

\begin{abstract}
In previous works (\cite{bouleau-denis,bouleau-denis2}) we have introduced a new method called {\it the lent particle method} which is an efficient tool to establish  
existence of densities for Poisson functionals. We now go further and iterate this method in order to prove smoothness of densities. More precisely, we construct 
Sobolev spaces of any order and prove a Malliavin-type criterion of existence of smooth density. We apply this approach to SDE's driven by Poisson random measures and also 
present some non-trivial examples to which our method applies.

\end{abstract}
{\bf AMS 2000 subject classifications:} Primary {60G57, 60H05} ;
secondary {60J45,60G51}
\\
{\bf Keywords:}  {stochastic differential equation, Poisson functional, Dirichlet Form, energy
image density, L\'evy processes, gradient, carr\'e du champ}

\section{Introduction}
{
In order to study the regularity of the distributions of  Wiener functionals, in particular solutions of SDE's driven by a Wiener process, Dirichlet forms have shown their interest thanks to the weak hypotheses they need (cf. \cite{bouleau-hirsch2}). Soon after the extension of the Malliavin calculus to the jump case (Bismut, Bichteler, Gravereaux, Jacod, L\'eandre) in the 1980's, Agn\`es Coquio obtained existence of density results  for SDE's driven by Poisson random measures by Dirichlet forms methods (cf. \cite{coquio}) weakening, as in the continuous case, the assumptions on the coefficients of the SDE.

We took anew this approach, generalizing  the method thanks to an important simplification expressed by a formula : {\it the lent particle formula} (cf. \cite{bouleau4}, \cite{bouleau-denis}, \cite{bouleau-denis2}), which expresses the gradient on the Poisson space (upper space) in function of the gradient on the initial space (bottom space) and the creation and annihilation operators.

This representation of the gradient is expressed in functional spaces which} allow{s} easily an iteration of the gradient thanks to product spaces. 

We take here advantage of this algebraic simplicity to obtain results of existence of $\mathcal{C}^\infty$-density.  Dirichlet forms methods are rarely used for this only goal. What they bring is not, in the present redaction,  weakening  the assumptions, but clarifying and generalizing the method, with especially the possibility to address the case of an infinite dimensional initial space (bottom space) as the Wiener space itself. We illustrate this direction by an example where a particle jumps with jumps given by an auxiliary diffusion.
Finally this study shows that the language of Dirichlet forms allows easily  to extend the Malliavin calculus to Poisson measures and the introduced notions: Sobolev spaces, gradients, iterated gradients... are simple and interesting by themselves and we think they are adapted to a deeper investigation of extensions of classical functional inequalities.

{Part 2 is devoted to recalling previous results and fixing the notation. Sobolev spaces are defined in Part 3. The connexion with classical hypotheses and functional inequalities (Khintchine and Meyer inequalities) are exposed in Part 4. Next, Part 5 and 6 are devoted to the criterion of regularity and its application to solutions of SDE's and Part 7 illustrates the results by some non classical examples.}
\section{Notation and hypotheses.}
%
%
%

\subsection{Hypotheses on the bottom structure}{\label{notations}}
Let $(\Xi,\cX ,\nu,\bbd,\gamma)$ be a local symmetric Dirichlet
structure which admits a carr\'e du champ operator i.e. $(\Xi,\cX
,\nu )$ is a measured space, $\nu$ is $\sigma$-finite and the
bilinear form
\[ e [f,g]=\frac12\int\gamma [f,g]\, d\nu,\]
is a local  Dirichlet form with domain $\bbd\subset L^2 (\nu )$
and carr\'e du champ operator $\gamma$ (see Bouleau-Hirsch
\cite{bouleau-hirsch2}, Chap. I). We assume that for all $x\in \Xi$,
$\{ x\}$ belongs to $\cX$ and that $\nu$ is diffuse
($\nu(\{x\})=0\;\forall x$). The structure  $(\Xi,\cX
,\nu,\bbd,\gamma)$ is called the {\it bottom structure}. Following
\cite{bouleau-denis} \S 3.2.2, we are given an auxiliary
probability space $(R,\cR ,\rho)$ such that the dimension of the
vector space $L^2 (R,\cR
 ,\rho)$ be infinite.\\
We assume that the Hilbert space $\bbd$ is separable so that the
bottom Dirichlet structure admits a gradient operator, {denoted by $\flat$}, and we
choose a version of it with values in
 the  space $L_0=\{ g\in L^2 (R,\cR ,\rho); \int_R g(r)\rho(dr)=0\}$.\\
  Let us recall some important properties:
  \begin{itemize}
  \item $\forall u\in\bbd, u^{\flat} \in L^2 (\Xi,\cX,\nu ; L_0)\subset L^2 (\Xi\times R ,\cX \otimes \cR
 ,\nu\times\rho).$
 \item $\forall u\in \bbd$, $\int_R \| u^{\flat}\|^2 (\cdot ,r)\rho (dr) =\gamma[u]$.
 \item (chain rule in dim 1) if
$F:\R\rightarrow \R$ is Lipschitz  then $\forall u\in\bbd,\
(F\circ u)^{\flat}=(F'\circ u )u^{\flat}.$ \item (chain rule in
dim $d$) if $F$ is $\mathcal{C}^1$ (continuously differentiable)
and Lipschitz from $\R^d$ into $\R$ then
\[ \forall u=(u_1 ,\cdots ,u_d) \in \bbd^d ,\ (F\circ
u)^{\flat}=\sum_{i=1}^d (F'_i \circ u ) u_i^{\flat}.\]
\end{itemize}
Although not necessary, we assume for simplicity that constants
belong to $\bbd_{loc}$ (see Bouleau-Hirsch \cite{bouleau-hirsch2}
Chap. I Definition 7.1.3.) and that
\begin{equation}\label{232}
1\in \bbd_{loc} \makebox{ which implies }\ \gamma [1]=0 \makebox{
and  } 1^{\flat}=0.
\end{equation}
We now want to iterate the derivative and  define {$\mathbf{ d}^{\infty}$} to
be the set of infinitely differentiable elements in $\bbd$.
We suppose that the bottom structure and $N$ satisfy the following hypothesis:\\
\underline{Hypothesis { (H)}}.
The structure $(\Xi,\cX ,\nu,\bbd,\gamma)$ with generator $(a,\mathcal{D}(a))$ is such that there
exists a subspace $H$ of $\cD (a)\bigcap L^1 (\nu )$,  dense in $L^1 (\nu )\cap L^2 (\nu )$
and such that $\forall f\in H,\;\gamma[f]\in L^2(\nu)$.\\
 {\bf Remark:} In \cite{bouleau-denis, bouleau-denis2}, we assume that the bottom structure satisfies what we call (EID) property. Here, since we  consider 
{\it infinitely differentiable} Poisson functionals, we will be able to prove directly existence and regularity of the density so that we do not need to assume such an hypothesis. 
Moreover, it is obvious that in these references, the (EID) property plays no role in the construction and the properties of the upper structure.
\subsection{Dirichlet structures and Poisson measures.}

 We are given $N$ a Poisson random measure on $[0,+\infty[\times \Xi$ with intensity $dt\times \nu$ defined on the
probability space $(\Om_1 ,\cA_1 ,\bbP_1)$ where $\Om_1$ is the
configuration space, $\cA_1$ the $\sigma$-field generated by $N$ and
$\bbP_1$ the law of $N$. We set $\tN =N-
 dt\times\nu$.\\
 We consider also
another  probability space $(\Om_2,\cA_2 ,\bbP_2)$ on which typically, a semi-martingale or another Poisson measure is defined (see application to SDE's, Section \ref{SDE}).



We shall work on the product probability space:
\[(\Om,\cA ,\bbP)=(\Om_1 \times \Om_2 ,\cA_1 \otimes \cA_2 ,\bbP_1
\times \bbP_2 ).\]

As we have in mind to iterate the derivative, we generalize { the} definition given in \cite{bouleau-denis} in the following way.\\
For all $n\in \Ne$,  we  construct a
random Poisson measure $N\odot\rho^{\odot n}$ on $[0,+\infty[\times \Xi\times R^n
$ with compensator $dt\times\nu\times \underbrace{\rho\times\cdots\times\rho}_{n\ times}$ such that if $N=\sum_i
\varepsilon_{(\alpha_i ,u_i )}$ then $N\odot\rho ^{\odot n}=\sum_i
\varepsilon_{(\alpha_i ,u_i ,R^1_i,\cdots ,R^n_i )}$ where $(R^k_i )_{i\in\Ne}$, $k\in\Ne$,  are independent sequences of
i.i.d. random variables,  independent of $N$ whose common law is
$\rho$ and defined on some infinite product probability space $(\hOm, \hcA ,\hbbP)^{\Ne}$
so that $N\odot\rho^{\odot n}$ is defined on the product probability space:
$(\Om,
\cA ,\bbP)\times (\hOm, \hcA ,\hbbP)^{\Ne}$.\\
In the case $n=1$, we simply denote $N\odot\rho^{\odot 1}=N\odot \rho$.\\
We now introduce the creation and annihilation
operators $\crea$ and $\anni$:
\[\begin{array}{l} \forall (t,u)\in [0,+\infty[\times \Xi,\forall w_1\in\Om_1,\
\crea_{(t ,u)} (w_1)=w_1{\bf 1}_{\{ (t ,u)\in supp
\, w_1\}}+(w_1+\varepsilon_{(t ,u)}\}) {\bf 1}_{\{ (t ,u)\notin supp \, w_1\}}\\
\forall (t,u)\in [0,+\infty[\times \Xi,\forall w_1\in\Om_1,\ \anni_{(t
,u)} (w_1)=w_1{\bf 1}_{\{ (t ,u)\notin supp \,
w_1\}}+(w_1-\varepsilon_{(t ,u)}\}) {\bf 1}_{\{ (t ,u)\in supp \,
w_1\}}.
\end{array}\]
In a natural way, we extend these operators on $\Omega$ by setting
if $w=(w_1 ,w_2)$:
\[ \crea_{(t ,u)}(w)=(\crea_{(t ,u)}(w_1),w_2)\
\makebox{and}\ \anni_{(t ,u)}(w)=(\anni_{(t ,u)}(w_1),w_2),\] and
then to the functionals by
\[ \crea H(w,t ,u)=H(\crea_{(t ,u)} w,t, u)\quad\makebox{ and }\quad\anni H(w,t ,u)=
H(\anni_{(t , u)}w,t,u).\] We now recall the main Theorem of
\cite{bouleau-denis} (see also Theorem 3 in \cite{bouleau-denis2}) which gives a construction and an explicit formula for a
gradient of the upper structure $(\Om ,\cA ,\bbP ,\bbD ,\cE ,\Gamma)$.\\
   We denote by $\cD_0$ the set of elements in $L^2 (\bbP )$
which are the linear combinations of variables of the form
$e^{i\tN (f)}$ with $f\in \left({H}\otimes L^2 (dt)\right)\bigcap L^1 (\nu\times dt)$ {where $H\subset \mathcal{D}(a)$ was introduced in hypothesis (H0)}, recall that $\tN =N-dt\times\nu$,  where ${H}\otimes L^2 (dt)$ denotes the algebraic tensor product of $H$ and $L^2 (dt)$.\\
 Following \cite{bouleau-hirsch2}
  we denote by $L^2 (\R^+ ,dt)\ho \bbd  $ the domain of  the Dirichlet structure which is the product of $(\bbd ,e )$ and the trivial one on $L^2 (dt )$.
  We still denote by $a$ and $\gamma$  its generator and its
  carr\'e du champ, these operators  act only on the  variable $x\in
  \Xi$.\\

If $U=\sum_p \lambda_p e^{i\tN (f_p)}$ belongs to $\cD_0$, we put
\begin{equation}{\label{214}}
\tilde{A}_0 [U]=\sum_p \lambda_p e^{i\tN (f_p)}(i\tN ({a}[f_p
])-\frac12 N (\gamma [f_p])),\end{equation} where, as explained
above, if $f(x,t)=\sum_l u_l (x) \varphi_l (t) \in \cD (a)\otimes
L^2 (dt)$
\[ {a} [f]=\sum_l a[u_l ]\varphi_l \makebox{ and } \gamma [f]=\sum_l \gamma [u_l ]\varphi_l.\]
We denote $\sbbD$ the
completion of $\cD_0\otimes L^2 ([0,+\infty[,dt)\otimes \bbd$ with
respect to the norm
\begin{eqnarray*}\| H\|_{\sbbD}\!\!&=&\!\!\left( \E\int_0^{\infty}\!\!\!\int_{ \Xi }\anni(\gamma
[H])(w,t,u)N(dt,du)\right)^{\frac12} \!+\E\int_0^{\infty}\!\!\!\int_\Xi
(\anni|H|)(w,t,u)\eta (t,u)N(dt,du)\\
\!\!&=&\!\! {\left( \E\int_0^{\infty}\int_{ \Xi }\gamma
[H](w,t,u)\nu (du)dt\right)^{\frac12}\! +\E\int_0^{\infty}\int_\Xi
|H|(w,t,u)\eta (t,u)\nu(du)dt},\end{eqnarray*} where $\eta$ is a fixed
positive function in $L^2 (\R^+ \times \Xi ,dt\times d\nu)$.\\
Finally we denote by $\bbP_N$ the measure $\bbP_N=\bbP (dw)N_w
(dt,du)$. {One has to remember that  the image of $\bbP\times\nu\times dt$ by
$\crea$ is nothing but $\bbP_N$ whose image by $\anni$ is $\bbP\times\nu\times dt$  (see Lemma 13 in \cite{bouleau-denis})}.
\begin{Th}
(i) $A_0$ is symmetric, non positive on $\cD_0$ therefore  it is
closable and we can consider its Friedrichs extension $(A,\cD
(A))$ which generates a closed Hermitian form $\cE$ with domain
$\bbD\supset\cD (A)$ such that
\[\forall U\in\cD (A)\ \forall V\in\bbD ,\ \cE (U,V)=-\E
[A[U]\overline{V}].\] Moreover, $(\cE ,\bbD)$ is a local Dirichlet
form which admits a carré du champ operator
$\Gamma$.\\
(ii) The Dirichlet form $(\bbD ,\cE)$ admits a gradient operator
that we denote by $\sharp$ and given by the following formula:
\begin{equation}\label{formulegradient}
\forall F\in\bbD,\quad\ F^\sharp = \int_0^{+\infty}\int_{\Xi\times R}
\anni((\crea F )^\flat)\, dN\odot \rho\in
L^2(\mathbb{P}\times\hat{\mathbb{P}}) .\end{equation} Formula
{\rm(\ref{formulegradient})} is justified by the following decomposition:
$$F\in\mathbb{D}\quad\stackrel{\crea-I}{\longmapsto}\quad \varepsilon^+F-F\in\underline{\mathbb{D}}
\quad\stackrel{\anni((.)^\flat)}{\longmapsto}\quad\anni((\varepsilon^+F)^\flat)\in
L^2_0(\mathbb{P}_N\times\rho)\quad\stackrel{d(N\odot\rho)}{\longmapsto}\quad F^\sharp\in
L^2(\mathbb{P}\times\hat{\mathbb{P}})$$
where each operator is continuous on the range of the preceding one
and where $L^2_0 (\bbP_N \times \rho )$ is the closed set of
elements $G$ in $L^2
(\bbP_N \times \rho )$ such that $\int_R G d\rho=0$ $\bbP_N$-a.e.\\
Moreover, we have for all $F\in\bbD$
\begin{equation}\label{formuleocc}\Gamma [F]=\hE(F^\sharp )^2
=\int_0^{+\infty}\int_\Xi \anni(\gamma [\crea F ])\,
dN,\end{equation} where $\hE$ denotes the expectation with respect
to probability $\hbbP$.\\
\end{Th}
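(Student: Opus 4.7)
Since the theorem recalls the construction from \cite{bouleau-denis}, the plan is to outline the main steps of its proof in the present setting.

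For part (i), I would first establish the symmetry and non-positivity of $\tilde A_0$ on $\cD_0$ by a direct Fourier computation. For $U=e^{i\tN(f)}$ and $V=e^{i\tN(g)}$ with $f,g$ in the test space supplied by hypothesis (H), the L\'evy--Khintchine formula
\[
\E[e^{i\tN(h)}]=\exp\Bigl(\int (e^{ih}-1-ih)\,d\nu\,dt\Bigr)
\]
allows one to compute $\E[\tilde A_0[U]\bar V]$ explicitly and identify it with a bilinear expression in $(f,g)$ involving $\gamma[f,g]$ only; symmetry then follows from that of $\gamma$, and non-positivity (taking $V=U$) from $\gamma[f]\geq 0$. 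Friedrichs extension delivers the closed Hermitian form $(\cE,\bbD)$, and locality together with the existence of the carr\'e du champ $\Gamma$ follows by checking the Leibniz rule on the algebra $\cD_0$ and applying the general criteria of \cite{bouleau-hirsch2}.

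For part (ii), the plan is to construct $\sharp$ as the composition of the three continuous maps in the diagram displayed just after \eqref{formulegradient}. First, $F\mapsto\crea F-F$ sends $\bbD$ into $\sbbD$: on exponentials one has $\crea e^{i\tN(f)}-e^{i\tN(f)}=e^{i\tN(f)}(e^{if(t,u)}-1)$, whose $\sbbD$-norm is controlled by $\cE(F,F)$, and one then extends by density. Second, $G\mapsto\anni(G^{\flat})$, with $\flat$ acting in the bottom variable, sends this range into $L^2_0(\bbP_N\times\rho)$; membership in $L^2_0$ comes from $\int_R u^{\flat}\,d\rho=0$. Third, integration $G\mapsto\int G\,d(N\odot\rho)$ is an isometry from $L^2_0(\bbP_N\times\rho)$ into $L^2(\bbP\times\hbbP)$, since $N\odot\rho$ has compensator $dt\times\nu\times\rho$ and $G$ has zero $\rho$-mean, so the ordinary and compensated integrals coincide. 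Composing these three steps produces the formula \eqref{formulegradient}. The identity $\Gamma[F]=\hE(F^{\sharp})^2=\int\anni(\gamma[\crea F])\,dN$ then follows by taking the $\hE$-expectation of the square of $F^{\sharp}$, using the independence of the $(R^k_i)$ and the bottom relation $\int_R (u^{\flat})^2\,d\rho=\gamma[u]$.

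The main obstacle is the middle arrow: one must ensure that $\crea F$, viewed as a function of the added particle position $(t,u)$, lies in the domain of $\flat$ in the bottom variable, and that the resulting object is square integrable against $\bbP_N\times\rho$. The crucial tool is the image-measure identity recalled just before the theorem, namely that $\crea$ pushes $\bbP\times\nu\times dt$ onto $\bbP_N$, with $\anni$ as its inverse (Lemma 13 of \cite{bouleau-denis}). This converts computations under $\bbP_N$, where $\flat$ has no direct meaning, into computations under $\bbP\times\nu\times dt$, where the tensor-product Dirichlet structure $L^2(\R^+,dt)\ho\bbd$ furnishes the bottom gradient, and it is precisely this identity that makes the ``add a particle, differentiate, remove the particle'' mechanism of the lent particle formula rigorous.
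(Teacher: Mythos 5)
Note that the paper gives no proof of this theorem: it is explicitly recalled from \cite{bouleau-denis} (see also Theorem 3 of \cite{bouleau-denis2}), and your outline reproduces the strategy of that reference faithfully --- the Fourier/L\'evy--Khintchine computation for symmetry and non-positivity of $\tilde A_0$, the Friedrichs extension, the three-arrow factorization of $\sharp$, and the image-measure identity $\crea(\bbP\times\nu\times dt)=\bbP_N$ as the key to making the middle arrow rigorous. The only points stated a little loosely are that the last arrow is an isometry because, \emph{conditionally on} $N$, the marks $R_i$ are independent and $G(T_i,X_i,\cdot)$ is $\rho$-centered (rather than by the compensation formula alone, which would yield integration against $dt\times\nu\times\rho$ instead of $\bbP_N\times\rho$), and that one must still check that the candidate $\hE(F^\sharp)^2=\int\anni(\gamma[\crea F])\,dN$ actually coincides with the carr\'e du champ of $\cE$ by verifying $\frac12\E\!\int\anni(\gamma[\crea F])\,dN=\cE(F)$ (and its polarized, localized version) on the generating algebra $\cD_0$.
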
Let us recall  without proof  some
properties of this structure which are quite natural.\\
\begin{Pro}{\label{FormuleDerive}}

 If $h\in L^2 (\R^+,dt) \ho \bbd$, then $\tN
(h)=\int_0^{+\infty}\int_\Xi h(t,u)\tN (ds,du)$ belongs to $\bbD$ and

\item \begin{equation}
\Gamma [\tN (h)]=\int_0^{+\infty}\int_\Xi \gamma
[h(t,\cdot)](u)N(dt,du).\end{equation}
\item \begin{equation}{\label{FormuleUpperSpace}}
\left( \tN (h)\right)^{\sharp}=\int_0^{+\infty}\int_{\Xi\times R}
h^\flat (t,u,r)N\odot\rho (dt,du,dr).
\end{equation}
\end{Pro}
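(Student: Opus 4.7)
The plan is to verify the second formula from the explicit representation (\ref{formulegradient}) and the first from (\ref{formuleocc}), by directly computing how $\crea$ and $\anni$ act on $\tN(h)$, and then to extend from a suitable core to all of $L^2(\R^+, dt) \ho \bbd$ by density. The key feature making the computation clean is that $\tN(h)(w)$ is a function of the configuration $w$ alone, so the bottom operators $\flat$ and $\gamma$ (which act only in the $u$-variable) will see $\tN(h)$ as a constant and will only produce terms coming from $h$.

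I would first work with $h$ in a dense subspace on which $\tN(h) \in \bbD$ is known --- for instance, $h$ of the form $\sum_l \varphi_l(t) u_l(x)$ with $u_l \in H$ and $\varphi_l \in L^2(dt) \cap L^\infty$ of compact support, so that $e^{i\lambda \tN(h)} \in \cD_0$ for every real $\lambda$; differentiating in $\lambda$ at $0$ and using (\ref{214}) together with closability then yields $\tN(h) \in \bbD$. The crucial observation is that, because $\nu$ is diffuse, for $\bbP \times dt \times \nu$-almost every $(w, t, u)$ the point $(t,u)$ does not lie in the support of $w$, so $\crea_{(t,u)} w = w + \varepsilon_{(t,u)}$ and hence
\[
\crea \tN(h)(w, t, u) = \tN(h)(w) + h(t, u).
\]
Since $\tN(h)(w)$ is constant in the $u$-variable and $1^\flat = 0$ by (\ref{232}), the chain rule for $\flat$ gives $(\crea \tN(h))^\flat(w, t, u, r) = h^\flat(t, u, r)$; as this no longer depends on $w$, its image under $\anni$ coincides with itself $\bbP_N \times \rho$-a.s. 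Substituting into (\ref{formulegradient}) yields (\ref{FormuleUpperSpace}). The same argument, applied to $\gamma$ in place of $\flat$, gives $\gamma[\crea \tN(h)](w, t, u) = \gamma[h(t, \cdot)](u)$ (again using that $\tN(h)(w)$ is $u$-constant and $\gamma[1] = 0$), and plugging into (\ref{formuleocc}) produces the first formula.

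The density extension is essentially forced by the identities just proved: they show that on the chosen core, $\|\tN(h)\|_{\bbD}^2$ is controlled by $\|h\|_{L^2(dt \times \nu)}^2 + \int \int \gamma[h(t,\cdot)] \, d\nu \, dt$, i.e.\ by the norm in $L^2(\R^+, dt) \ho \bbd$, so $h \mapsto \tN(h)$ extends continuously to the whole space and both formulas pass to the limit by closedness of $\cE$. The main obstacle is the very first step: justifying $\tN(h) \in \bbD$ for the nice class, since the core $\cD_0$ is given only in terms of exponentials $e^{i\tN(f)}$ rather than linear functionals. Once that differentiation-in-$\lambda$ argument is carried out rigorously (with a cutoff to stay in $L^2$ and invoking the closed form), everything else reduces to unfolding the definitions.
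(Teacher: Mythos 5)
Your argument is correct. Note that the paper itself states this proposition explicitly \emph{without proof} (``Let us recall without proof some properties of this structure which are quite natural''), deferring to \cite{bouleau-denis}; your route --- establish $\tN(h)\in\bbD$ on a core by differentiating $e^{i\lambda\tN(h)}$ in $\lambda$ and invoking closedness, observe that $\crea\tN(h)=\tN(h)+h(t,u)$ off a $\bbP\times dt\times\nu$-null set so that $(\crea\tN(h))^\flat=h^\flat$ and $\gamma[\crea\tN(h)]=\gamma[h(t,\cdot)]$ are unaffected by $\anni$, plug into (\ref{formulegradient}) and (\ref{formuleocc}), and close up by the isometry $\|\tN(h)\|^2_{L^2}+2\,\cE[\tN(h)]=\|h\|^2_{L^2(dt)\ho\bbd}$ --- is precisely the standard verification consistent with Theorem 1 and with how the identity is obtained in the cited reference.
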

\section{Definition of the Sobolev spaces}
We now give the construction of Sobolev spaces. As this construction is  valid for any Dirichlet form,  we give the details in the case of the bottom structure and then
apply the same procedure to upper space.
\subsection{On the bottom space}
We first define  Hilbert-valued functions in the domain of the Dirichlet form. To this end, let $E$ be a separable Hilbert space.
We denote by $S(E)$ the set of $E$-valued functions defined on $\Xi$ such that there exist $k\in\Ne$, $e_1 $,$\cdots$, $e_k$ in $E$ and $\varphi_1$ ,
$\cdots$, $\varphi_k$ in $\bbd$ with
\[ u=\sum_{i=1}^k \varphi_i e_i.\]
If $u=\sum_{i=1}^k \varphi_i e_i$ belongs to $S(E)$ we can define its derivative $Du=u^\flat=\sum_{i=1}^k \varphi_i^\flat e_i$ as an element of $L^2 (\nu ; L_0\ho E)${, (here and afterwards $\ho$ denotes completed tensor products between Hilbert spaces whereas $\otimes$ denotes algebraic tensor product)}.\\
We denote by $\bbd (E)$ the completion of $S(E)$ with respect to the norm:
\begin{eqnarray}{\label{Defnorm}} \parallel u\|^2_{\bbd (E)}&=&\| u\|^2_{L^2 (\nu)}+\| u^\flat \|^2_{L^2 (\nu ; L_0 \ho E)}.\end{eqnarray}
It is clear that $\bbd (E)$ is a separable Hilbert space and that the linear map $u\in S(E)\mapsto u^\flat \in L^2 (\nu ; L_0 \ho E)$ can be extended in a unique way as a continuous linear map from $\bbd (E)$ into $L^2 (\nu ; L_0 \ho E)$ and that identity \eqref{Defnorm} still holds.\\
From now on, we assume that  $\bbd$ admits a {\it core}. More precisely, we assume:\\
\underline{Hypothesis (C)}: There exists a dense subvector space $\bbd_0 \subset\bbd$ such that each element $u$ in $\bbd_0$ is such that:
\begin{enumerate}
\item $u\in \bigcap_{p\geq 1} L^p (\nu)$.
\item $u$ is {\it infinitely differentiable} in the sense that $u^\flat \in \bbd (L_0 )$, $u^{(2\flat)} =\left( u^{\flat}\right)^\flat \in \bbd(L_0^{\ho 2}),$
    $ \cdots , u^{((n+1)\flat)} =\left( u^{(n\flat)}\right)^\flat \in \bbd(L_0^{\ho (n+1)}),\cdots  $
\item For all $n\in\Ne$, $u^{(n\flat)}\in \bigcap_{p\geq 1} L^p (\nu ; L_0^{\ho n})$.
\end{enumerate}
We introduce: $\bbd_0 (E)=\{ u=\sum_{i=1}^n \varphi_i e_i \in S (E)| \, {\varphi_i \in \bbd_0},\, i=1,\cdots ,n \}.$\\
In a natural way, similarly to the case $n=1$, if $u$ belongs to $\bbd_0 (E)$, for all $n\in\Ne$, $u^{(n\flat )}$ is well-defined as an element in $\bbd ({L_0^{\ho n}\ho E})$.
We are now able to define the Hilbert-valued Sobolev spaces.
\begin{Def} Let $n\in \Ne$, $p\geq 1$. We denote by $\bbd^{n,p}(E)$ the completion of $\bbd_0 (E)$ w.r.t. the norm
\begin{eqnarray}{\label{NormSobolev}} \|u\|_{n,p}&=&\|u\|_{L^p (\nu;E)}+\| u^\flat\|_{L^p (\nu ; L_0\ho E )}+\cdots +\| u^{(n\flat )}\|_{L^p (\nu;L_0^{\ho n}\ho E)}.
\end{eqnarray}
And we set:
\[ \bbd^{\infty} (E)=\bigcap_{n\in\Ne ,p\geq 1} \bbd^{n,p} (E) .\]
\end{Def}
We note $\bbd^\infty$ for $\bbd^\infty(\mathbb{R})$ and $\bbd^{n,p}$ for $\bbd^{n,p}(\mathbb{R})$, the space $\bbd^\infty(E)$ is endowed with the natural inductive limit topology which makes it a Fr\'echet space.

The following facts are standard to prove because $a$ is a closed operator:
\begin{itemize}
\item  Let $n\in\Ne,\, p\geq 1$, $\bbd^{n,p} (E)$ is a Banach space moreover the {operators} $u\mapsto u^\flat$,$\cdots$, $u\mapsto u^{(n\flat )}$ are
well-defined and continuous and equality \eqref{NormSobolev} holds for all $u\in\bbd^{n,p}$.
\item If $n\leq n'$, $p\leq p'$ then {$\bbd^{n',p}(E)\subset \bbd^{n,p}(E)$ and if $\nu (\Xi )<+\infty$, $\bbd^{n',p'}(E)\subset \bbd^{n,p}(E)$}.
\end{itemize}
We have defined the spaces $\bbd^{n,p}(E)$ using the gradient operator. We shall need also more restricted spaces involving the generator :
\begin{Def} We denote by $\bbbd^{\infty}$ the subvector space of elements $u$ in $\bbd^{\infty}$ such that $u$ belongs to 
$\cD (a)$ and $a(u)\in \bbd^{\infty} $ and we consider $$ \bbbd_0 (E)=\{ u=\sum_{i=1}^n \varphi_i e_i \in S(E)| \, \varphi_i \in \bbbd^{\infty},\, i=1,\cdots ,n \}.$$
Moreover, if $u\sum_{i=1}^n \varphi_i e_i \in \bbbd_0 (E)$, we set 
\[ a(u)=\sum_{i=1}^n a(\varphi_i )e_i  \, \in \bbd^{\infty} (E).\]
\end{Def}
\begin{Def}{\label{Def5}} Let $n\in \Ne$, $p\geq 1$. We denote by $\bbbd^{n,p}(E)$ the completion of $\bbbd_0 (E)$ w.r.t. the norm
\begin{eqnarray*}{\label{NormSobolevDom}} \|u\|_{\bbbd^{n,p}}&=&\|u\|_{n,p}+\|a(u)\|_{n,p}\end{eqnarray*}
And we set: $\bbbd^{\infty} (E)=\bigcap_{n\in\Ne ,p\geq 1} \bbbd^{n,p} (E) .$\\
\end{Def}
Here again, the following facts are easy to prove:
\begin{itemize}
\item For all $n\in\Ne,\, p\geq 1$, $\bbbd^{n,p} (E)\subset \bbd^{n,p} (E)\ $.
\item  For all $n\in\Ne,\, p\geq 1$, $\bbbd^{n,p} (E)$ is a Banach space and the map $u\mapsto a(u)$ is well defined and continuous from  $\bbbd^{n,p} (E)$ into $\bbd^{n,p} (E)$.
\item If $n\leq n'$, $p\leq p'$ then {$\bbbd^{n',p}(E)\subset \bbbd^{n,p}(E)$ and if $\nu (\Xi )<+\infty$, $\bbbd^{n',p'}(E)\subset \bbbd^{n,p}(E)$}.
\item $\bbbd^{\infty}=\bbbd^{\infty }({\R}).$
\end{itemize}
As usually, in the case $E=\R$, we omit it in the notations so that by the last property enounced above, there is no ambiguity in the notation.\\
\noindent{\bf Remark:}  We need to introduce $\bbbd^{n,p}$ spaces because in general, an element in $\bbd^{\infty}$ does not belong to $\cD (a)$. Nevertheless, one has to note that in all the classical examples such as the standard Sobolev spaces on $\R^d$ or the Sobolev spaces associated to the {Ornstein-Uhlenbeck} operator this property holds, see Section \ref{Meyer} below. 
\subsection{Sobolev spaces on the upper space}
We define
\[ \bbD_0 =\left\{ \varphi (\tN (f_1),\cdots ,\tN (f_k ))|\; k\in\Ne , \varphi \in C_c^{\infty} (\R^k ), f_i \in L^2 (\R^+ ,dt)\otimes \bbd^{\infty}\, i=1,\cdots ,k\right\},\]
and $$\bbD_0 (E)=\{ \sum_{i=1}^k G_i e_i |\, k\in\Ne , G_i\in \bbD_0 ,\, e_i \in E \ i=1,\cdots ,k\}.$$
As $\bbd_0$ is dense in $\bbd$, $\bbd_{\infty}\supset \bbd_0$ is also dense in $\bbd$ so  $\bbD_0$ is dense in $\bbD$ and clearly each element in $\bbD_0$ belongs to $\bigcap_{p\geq 1} L^p (\bbP)$. Moreover, it is easy to verify that if $X$ belongs to $\bbD_0 (E)$, it is infinitely differentiable. Indeed, similarly to the previous subsection with similar notation, $X^{(n\sharp )}$ is defined inductively as the derivate of $X^{((n-1)\sharp)}\in \bbD \left(L^2 (\hbbP^{(n-1)};E )\right)$ so it belongs to $\bbD \left(L^2 (\hbbP^{ n};E)\right)$ that we consider in a natural way as a subspace of $L^2 (\bbP\times\hbbP^{ n};E)$.
As in the case of the bottom space, we can define for all $n\in\Ne , p\geq 1$ the Sobolev space $\bbD^{n,p} (E)$ which is the closure of $\bbD_0 (E)
$  with respect to the norm
\[ \| X\|_{n,p}=\|X\|_{L^p (\bbP;E)}+\| X^\sharp\|_{L^p (\bbP\times\hbbP ;E)}+\cdots +\| X^{(n\sharp )}\|_{L^p (\bbP\times\hbbP^{n};E)},\]
and $\bbD^{\infty} (E)=\bigcap_{n\in\Ne ,p\geq 1} \bbD^{n,p}(E).$\\
In the same way as in the previous subsection,  for all $n\in\Ne , p\geq 1$  we consider first $\bbbD^{\infty}$, the subvector space of elements in $\bbD^{\infty}\bigcap \cD (A)$ such that $A(X)\in \bbD^{\infty}$, then define in an obvious way $\bbbD_0 (E)$ by 
\[\bbbD_0 (E)=\{ \sum_{i=1}^k G_i e_i |\, k\in\Ne , G_i\in \bbbD^{\infty} ,\, e_i \in E \ i=1,\cdots ,k\}.\]
and finally we construct space $\bbbD^{n,p} (E)$ which is the closure of $\bbbD_0 (E)$ with repect to the norm 
\[ \| X\|_{\bbbD^{n,p} (E)}= \| X\|_{n,p}+ \| A(X)\|_{n,p},\]
and put $\bbbD^{\infty} (E)=\bigcap_{n\in\Ne ,p\geq 1} \bbbD^{n,p} (E).$\\
{\bf Remark:} These Sobolev spaces satisfy the same properties as spaces $\bbd^{n,p}$ and $\bbbd^{n,p}$ {listed after Definition \ref{Def5}} so that we do not recall them.
\begin{Le}{\label{algebre}} Let $X\in \bbD^{\infty} $ and $Y\in\bbD^{\infty}(E)$, then $XY\in\bbD^{\infty} (E)$.
\end{Le}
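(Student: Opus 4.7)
The plan is to prove the Leibniz rule for the iterated gradient $\sharp$ on a dense subalgebra, and then extend by density using H\"older's inequality.

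First I observe that $\bbD_0$ is an algebra: the product of $\varphi(\tN(f_1),\ldots,\tN(f_k))$ and $\psi(\tN(g_1),\ldots,\tN(g_\ell))$ with $\varphi,\psi\in C_c^\infty$ equals $(\varphi\otimes\psi)(\tN(f_1),\ldots,\tN(g_\ell))$ and $\varphi\otimes\psi\in C_c^\infty$. Consequently, if $X\in\bbD_0$ and $Y=\sum_i G_i e_i\in\bbD_0(E)$ then $XY=\sum_i(XG_i)e_i$ lies in $\bbD_0(E)$. On this algebra the gradient obeys $(XY)^\sharp=XY^\sharp+YX^\sharp$: the representation
\[F^\sharp=\int_0^{+\infty}\int_{\Xi\times R}\anni((\crea F)^\flat)\,dN\odot\rho\]
reduces this to the multiplicativity of $\crea$ and $\anni$ together with the product rule $(uv)^\flat=u\,v^\flat+v\,u^\flat$ for the bottom gradient, itself a consequence of the chain rule in dimension $2$ applied to $(x,y)\mapsto xy$ with a truncation in $\bbd_{loc}$. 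Iterating the product rule $n$ times yields the binomial formula
\begin{equation*}
(XY)^{(n\sharp)}=\sum_{k=0}^{n}\binom{n}{k}\,X^{(k\sharp)}\cdot Y^{((n-k)\sharp)},
\end{equation*}
where the dot denotes the natural product of a scalar function of $(w,r_1,\ldots,r_k)\in\Om\times\hOm^k$ with an $E$-valued function of $(w,r_{k+1},\ldots,r_n)\in\Om\times\hOm^{n-k}$, viewed as an $E$-valued function on $\Om\times\hOm^n$.

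Fix $n\in\Ne$ and $p\geq 1$. By Cauchy--Schwarz applied termwise (using Fubini to extend each factor trivially in the missing $\hOm$-variables),
\begin{equation*}
\|(XY)^{(n\sharp)}\|_{L^p(\bbP\times\hbbP^n;E)}\leq\sum_{k=0}^n\binom{n}{k}\,\|X^{(k\sharp)}\|_{L^{2p}(\bbP\times\hbbP^k)}\,\|Y^{((n-k)\sharp)}\|_{L^{2p}(\bbP\times\hbbP^{n-k};E)},
\end{equation*}
so multiplication extends continuously as a bilinear map $\bbD^{n,2p}\times\bbD^{n,2p}(E)\to\bbD^{n,p}(E)$. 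For $X\in\bbD^\infty$ and $Y\in\bbD^\infty(E)$, density of $\bbD_0$ and $\bbD_0(E)$ in $\bbD^{n,2p}$ and $\bbD^{n,2p}(E)$ yields approximants $X_j\to X$ and $Y_j\to Y$ whose products $X_jY_j\in\bbD_0(E)$ are Cauchy in $\bbD^{n,p}(E)$ and converge to $XY$; hence $XY\in\bbD^{n,p}(E)$. Since $n,p$ are arbitrary, $XY\in\bbD^\infty(E)$.

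The main obstacle is the rigorous verification of the Leibniz identity for $\sharp$ on $\bbD_0$, which relies on the product rule for $\flat$ via a localization argument through $\bbd_{loc}$ and the multiplicativity of $\crea, \anni$; once this identity and its iterate are in hand, the result reduces to the Cauchy--Schwarz bound together with the density argument built into the definition of $\bbD^\infty(E)$.
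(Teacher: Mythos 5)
Your proof is correct and follows essentially the same route as the paper: establish the iterated Leibniz rule on the dense subalgebra $\bbD_0$ (producing $2^n$ terms of the form $X^{(k\sharp)}Y^{((n-k)\sharp)}$ with the $\hat\omega$-variables distributed between the factors), bound each term by H\"older with exponent $2p$, and conclude by density. The only cosmetic caveat is that your ``binomial'' identity should really be read as a sum over subsets of the $n$ auxiliary variables rather than $\binom{n}{k}$ copies of one fixed term, but by exchangeability of $\hbbP^n$ this does not affect the $L^p$ estimate, and the paper states the formula with the same degree of informality.
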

\begin{proof} Assume first that $X\in \bbD_0$ and $Y\in \bbD_0 (E)$. Then, clearly $Z=XY\in\bbD^{\infty} (E)$ and:
\begin{eqnarray*}
Z ^\sharp (w,w_1 )&=& X^\sharp (w,w_1)Y(w)+X(w)Y^\sharp (w,w_1 ),\\
Z^{(2\sharp )} (w,w_1 ,w_2 )&=& X^{(2\sharp )}(w,w_1,w_2)Y(w)+X^\sharp (w,w_1)Y^\sharp (w,w_2)+\\&&X^\sharp (w,w_2)Y^\sharp (w,w_1 )+X(w)Y^{(2\sharp )}(w,w_1,w_2 )
\end{eqnarray*}
and more generally, for any $n\in \Ne$, $X^{(n\sharp)} $ can be expressed as the sum of $2^n$ terms of the form $X^{(k\sharp)}Y^{((n-k)\sharp)}$ with $k\in
\{ 0,\cdots ,n \} $. The Holder's inequality yields for all $p\geq 1$:
\[ \| Z^{(n\sharp )}\|_{L^p (\bbP\times \hbbP^n ;E)} \leq 2^n \| X\|_{n,2p}\| Y\|_{n,2p}.\]
We deduce that for all $n\in\Ne$ and all $p\geq 1$, there exists a constant $C_{n,p}$ such that,
\[ \|Z\|_{n,p}\leq C_{n,p}\| X\|_{n,2p}\| Y\|_{n,2p}.\]
We then conclude by density.
\end{proof}
\begin{Pro}{\label{Module}} Let $X\in\bbD^{\infty} (E)$ then $Y=\| X\|_E^2$ belongs to $\bbD^{\infty}$.
\end{Pro}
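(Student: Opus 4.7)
The strategy is to first establish the statement on the dense subspace $\bbD_0(E)$ by an explicit computation analogous to the proof of Lemma \ref{algebre}, and then to extend to all of $\bbD^\infty(E)$ by a continuity argument based on bilinear bounds of the form $\|Y\|_{n,p}\leq C_{n,p}\|X\|_{n,2p}^2$.

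First, suppose $X=\sum_{i=1}^{k}G_i e_i\in\bbD_0(E)$. Then
\[
Y=\|X\|_E^2=\sum_{i,j=1}^{k}G_i G_j\,\langle e_i,e_j\rangle_E,
\]
which is a finite linear combination of products of elements of $\bbD_0\subset\bbD^\infty$. By iterated application of Lemma \ref{algebre}, $Y\in\bbD^\infty$. Moreover, the Leibniz computation used in the proof of Lemma \ref{algebre} gives explicitly, for every $n\in\Ne$, a Hilbert-valued Leibniz formula
\[
Y^{(n\sharp)}(w,w_1,\ldots,w_n)=\sum_{S\subseteq\{1,\ldots,n\}}\bigl\langle X^{(|S|\sharp)}(w;(w_i)_{i\in S}),\;X^{((n-|S|)\sharp)}(w;(w_i)_{i\notin S})\bigr\rangle_E,
\]
where $X^{(k\sharp)}$ and $X^{((n-k)\sharp)}$ are viewed in the natural way as $E$-valued functions on $\Om\times\hOm^n$.

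Next, the Cauchy--Schwarz inequality in $E$ applied pointwise, combined with H\"older's inequality in $L^p(\bbP\times\hbbP^n)$, yields the key a priori bound
\[
\|Y^{(n\sharp)}\|_{L^p(\bbP\times\hbbP^n)}\leq 2^{n}\,\|X\|_{n,2p}^2,
\]
and summing over the orders of differentiation gives a constant $C_{n,p}$ with $\|Y\|_{n,p}\leq C_{n,p}\|X\|_{n,2p}^2$.

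Finally, for an arbitrary $X\in\bbD^\infty(E)$, choose a sequence $(X_m)\subset\bbD_0(E)$ converging to $X$ in every norm $\|\cdot\|_{n,p}$, and set $Y_m=\|X_m\|_E^2\in\bbD^\infty$. Writing $Y_m-Y_\ell=\langle X_m-X_\ell,\,X_m+X_\ell\rangle_E$ and applying the same Leibniz computation to the bilinear pairing $(U,V)\mapsto\langle U,V\rangle_E$, one obtains
\[
\|Y_m-Y_\ell\|_{n,p}\leq C_{n,p}\,\|X_m-X_\ell\|_{n,2p}\bigl(\|X_m\|_{n,2p}+\|X_\ell\|_{n,2p}\bigr),
\]
so $(Y_m)$ is Cauchy in every $\bbD^{n,p}$. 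Since $\|X_m\|_E^2\to\|X\|_E^2$ in $L^p(\bbP)$, the limit is $Y=\|X\|_E^2$, which therefore belongs to $\bbD^{n,p}$ for every $n$ and $p$, i.e.~to $\bbD^\infty$. The main technical point is the derivation of the Hilbert-valued Leibniz formula in the first step, including identifying $X^{(k\sharp)}$ with an element of $L^p(\bbP\times\hbbP^k;E)$ so that the $E$-inner product can be computed pointwise on the enlarged product space $\hbbP^n$; once this formula and its $L^p$ consequence are in place, the remaining continuity extension is routine.
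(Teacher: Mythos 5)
Your proof is correct and follows essentially the same route as the paper: reduce to $\bbD_0(E)$, write a Leibniz formula for $Y^{(n\sharp)}$ with $2^n$ terms, derive the quadratic bound $\|Y\|_{n,p}\leq C_{n,p}\|X\|_{n,2p}^2$ (the paper uses an orthonormal basis and $ab\leq\frac12(a^2+b^2)$ where you use Cauchy--Schwarz in $E$ plus H\"older, which is equivalent), and conclude by density. Your final step via the polarization identity $Y_m-Y_\ell=\langle X_m-X_\ell,X_m+X_\ell\rangle_E$ is in fact a more careful justification of the density argument than the paper's, which is left implicit there.
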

\begin{proof} Assume first that $X\in \bbD_0 (E)$: $ X= \sum_{i=1}^k G_i e_i ,$ 
where, without loss of generality, $(e_i )_{ 1\leq i\leq k}$ is an orthonormal family in $E$ so that 
 $ Y =\sum_{i=1}^k G_i^2 .$\\
As a consequence of the previous Lemma, $Y$ belongs to $\bbD^{\infty}$ and by the functional calculus, we have:
\[ Y^\sharp =2\sum_{i=1}^k G_i G_i^\sharp .\]
Let $p\geq 1$, then obviously: $ \| Y\|_{L^p (\bbP )} =\| X\|_{L^{2p}(\bbP ;E)}^2$ 
and using the trivial inequality $2\left|\sum_{i=1}^k G_i G_i^\sharp\right| \leq \sum_{i=1}^k G_i^2 +\sum_{i=1}^k (G_i^\sharp )^2 $ we get
\begin{eqnarray*}
\| Y^\sharp \|_{L^p (\bbP\times \hbbP )} &\leq&\| X\|_{L^{2p}(\bbP;E)}^2 +\| X^\sharp \|^2_{L^{2p} (\bbP\times\hbbP;E)}.
\end{eqnarray*}
As a consequence: 
$
\| Y\|_{1,p}\leq 2\| X\|_{1,2p}^2.
$\\
More generally, let $n\in \Ne$, we have
\begin{eqnarray*} 
(G_i^2 )^{(n\sharp)}(w,w_1 ,\cdots ,w_n )&=&\sum_{j=1}^{2^n}G_i^{(m_j \sharp )} (w, w_{\sigma_j (1)},\cdots ,w_{\sigma_j (m_j )})G_i^{((n-m_j)\sharp )} (w, w_{\sigma_j (m_j +1 )},\cdots ,w_{\sigma_j (n)}),\end{eqnarray*}
where for all $j\in \{ 1,\cdots ,2^n \}$, $m_j \in \{ 0,\cdots ,n\}$, $\sigma_j$ is a permutation on $\{ 1,\cdots ,n\}$ and both do not depend on $G_i$. 
Using  $ab\leq \frac12 (a^2 +b^2)$ we get:
\begin{eqnarray*} \sum_{i=1}^k (G_i^2 )^{(n\sharp)}&\leq& \frac12 \sum_{j=1}^{2^n}\sum_{i=1}^k \left( (G_i^{(m_j \sharp )})^2 +(G_i^{((n-m_j)\sharp )})^2 \right) \\
&=& \frac12\sum_{j=1}^{2^n}\left(  \| G^{((m_j)\sharp)}\|_E^2 +\| G^{((n-m_j)\sharp)}\|_E^2 \right) .\end{eqnarray*} 
This yields:
\[ \| Y^{n\sharp}\|_{L^p (\bbP\times\hbbP^n )}\leq 2^n \| G\|_{n,2p}^2.\]
We deduce that for all $n\in \Ne$ and all $p\geq 1$ there exists a constant $C_{n,p}$ such that 
\[ \| Y\|_{n,p} \leq C_{n,p}\| X\|^2_{n,2p}.\]
It is now easy to conclude using a density argument.
\end{proof}
\begin{Cor}\label{Gamma}  Let $X\in {\bbD^\infty}$, then $\Gamma [X]$ belongs to ${\bbD^\infty}$.
\end{Cor}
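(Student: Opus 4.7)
The plan is to apply Proposition~\ref{Module} to the element $X^\sharp$, viewed as an $L^2(\hbbP)$-valued random variable on $(\Om,\cA,\bbP)$. Recall the occupation-type formula \eqref{formuleocc}, which reads
\[
\Gamma[X] = \hE\bigl[(X^\sharp)^2\bigr] = \bigl\| X^\sharp\bigr\|^2_{L^2(\hbbP)}.
\]
Thus $\Gamma[X]$ is nothing but the squared norm, in the Hilbert space $E=L^2(\hbbP)$, of the gradient $X^\sharp$. So if we can show
\[
X \in \bbD^\infty \;\Longrightarrow\; X^\sharp \in \bbD^\infty\bigl(L^2(\hbbP)\bigr),
\]
then Proposition~\ref{Module} applied with this choice of $E$ gives directly $\Gamma[X]=\|X^\sharp\|_E^2 \in \bbD^\infty$.

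The first step is therefore to verify the implication above. For $X\in\bbD_0$, the iterated gradients $X^{(n\sharp)}$ were constructed inductively as elements of $\bbD(L^2(\hbbP^{n}))$, with the natural identification $L^2(\hbbP^{n})\simeq L^2(\hbbP)\ho L^2(\hbbP^{n-1})$. Hence $X^\sharp$ itself, viewed as an $L^2(\hbbP)$-valued functional, admits the iterated derivatives $(X^\sharp)^{(n\sharp)} = X^{((n+1)\sharp)}$ belonging to $L^p(\bbP\times\hbbP^{n};L^2(\hbbP))$ for every $p\geq 1$, just by reindexing the last copy of $\hbbP$ as the target Hilbert space. This shows $X^\sharp \in \bbD_0(L^2(\hbbP))$ with, for each $n$ and $p$,
\[
\|X^\sharp\|_{\bbD^{n,p}(L^2(\hbbP))} \leq \|X\|_{\bbD^{n+1,p}}.
\]
Extending by density from $\bbD_0$ to $\bbD^\infty$ through the closed graph of the successive gradients, we obtain the desired implication.

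The second step is then routine: Proposition~\ref{Module} with $E=L^2(\hbbP)$ applied to $X^\sharp\in\bbD^\infty(L^2(\hbbP))$ yields $\|X^\sharp\|_E^2 \in \bbD^\infty$, and \eqref{formuleocc} identifies this quantity with $\Gamma[X]$.

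The only delicate point is the reindexing step, i.e.~making the identification $(X^\sharp)^{(n\sharp)} = X^{((n+1)\sharp)}$ rigorous once one passes from $\bbD_0$ to the closure $\bbD^\infty$. This is essentially the same argument that justifies iterating the gradient in the definition of $\bbD^\infty(E)$ itself; the norm estimate displayed above shows that the map $X\mapsto X^\sharp$ is continuous from $\bbD^{n+1,p}$ into $\bbD^{n,p}(L^2(\hbbP))$, so the identification persists in the limit. Once this technical point is settled, the corollary follows at once.
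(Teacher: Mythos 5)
Your proof is correct and is exactly the argument the paper uses: the paper's proof of this corollary is the one-liner ``apply Proposition \ref{Module} to $X^\sharp$,'' with $E=L^2(\hbbP)$ and the identification $\Gamma[X]=\hE[(X^\sharp)^2]=\|X^\sharp\|^2_{L^2(\hbbP)}$ from \eqref{formuleocc}. Your additional verification that $X\in\bbD^\infty$ implies $X^\sharp\in\bbD^\infty(L^2(\hbbP))$ via reindexing the iterated gradients is a reasonable filling-in of what the paper leaves implicit.
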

\begin{proof} Just apply the preceding Proposition to $X^\sharp$.
\end{proof}
The next Lemma generalizes identity \eqref{FormuleUpperSpace}:
\begin{Le}{\label{DeriveSimple}}  Let $h\in L^2 (\R^+,dt)\otimes \bbd^{\infty}$, then $\tN
(h)=\int_0^{+\infty}\int_\Xi h(t,u)\tN (ds,du)$ belongs to $\bbD^{\infty}$ and for all $n\in\Ne$ :
\begin{equation}
\tN (h)^{(n\sharp )}=\int_0^{+\infty}\int_{\Xi\times R^n} 
h^{(n\flat )} (t,u,r_1,\cdots ,r_n )N\odot\rho^{ \odot n}(dt,du,dr_1 ,\cdots ,r_n).
\end{equation}
\end{Le}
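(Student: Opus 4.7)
The plan is to argue by induction on $n\in\Ne$. The base case $n=1$ is Proposition~\ref{FormuleDerive}. Assume the formula holds at level $n$, so that $\tN(h)^{(n\sharp)}$, viewed as an element of $\bbD\bigl(L^2(\hbbP^n)\bigr)$, equals $\int h^{(n\flat)} dN\odot\rho^{\odot n}$. Applying the gradient formula \eqref{formulegradient}---extended to Hilbert-valued functionals through the construction of the Sobolev spaces on the upper space recalled above---to this variable yields
\[
\tN(h)^{((n+1)\sharp)} = \int_0^{+\infty}\!\int_{\Xi\times R} \anni\bigl((\crea \tN(h)^{(n\sharp)})^\flat\bigr)(t,u,r)\, N\odot\rho(dt,du,dr).
\]

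Next I would compute the integrand explicitly using the lent particle mechanism. Inserting a particle at $(t,u)$ into $N$ forces the auxiliary system to draw a fresh vector $(R^1,\dots,R^n)$ of i.i.d.\ $\rho$-distributed marks to attach to it in $N\odot\rho^{\odot n}$, so
\[
\crea_{(t,u)}\tN(h)^{(n\sharp)} = \tN(h)^{(n\sharp)} + h^{(n\flat)}(t,u,R^1,\dots,R^n).
\]
Taking the bottom gradient $\flat$ in the variable $u$ kills the first summand (which is $u$-independent) and, by the definition of $h^{((n+1)\flat)}$ as the iterated bottom gradient, sends the second to $h^{((n+1)\flat)}(t,u,R^1,\dots,R^n,r)$, with $r$ the new $\flat$-variable. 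The subsequent $\anni_{(t,u)}$ acts trivially since the integrand no longer depends on the configuration. Integrating against $N\odot\rho$ attaches one more independent $\rho$-mark to each atom of $N$, and combining this with the freshly drawn $(R^1,\dots,R^n)$ produces integration against $N\odot\rho^{\odot(n+1)}$, giving the claim at level $n+1$.

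For well-definedness and integrability, I would first work with $h\in L^2(\R^+,dt)\otimes \bbd_0$, where every iterated gradient $h^{(n\flat)}$ has finite $L^p(\nu;L_0^{\ho n})$-norms for every $p\ge 1$ by Hypothesis~(C). The centering property of $L_0$ in each fresh mark variable makes the stochastic integral $\int h^{(n\flat)}dN\odot\rho^{\odot n}$ coincide with its compensated version; combined with Khintchine--Meyer type estimates (Part~4 of the paper), this provides $L^p$-bounds showing $\tN(h)\in \bbD^{n,p}$ for every $n,p$. The general case $h\in L^2(\R^+,dt)\otimes\bbd^{\infty}$ then follows by density and continuity of the iterated gradients $\sharp$ on the Sobolev spaces.

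The main obstacle I anticipate is the bookkeeping of the auxiliary marks when combining the fresh $(R^1,\dots,R^n)$ produced at the creation step with the outer $r$ coming from the $N\odot\rho$ integration: one must verify that the joint law obtained in this way matches the definition of $N\odot\rho^{\odot(n+1)}$ as a Poisson measure with intensity $dt\times\nu\times\rho^{n+1}$. Morally this is the exchangeability of i.i.d.\ marks together with the compatibility between $N\odot\rho^{\odot n}$ and $N\odot\rho^{\odot(n+1)}$ built into the construction, but writing it down cleanly requires carefully tracking which auxiliary probability spaces $(\hOm,\hcA,\hbbP)^{\Ne}$ carry the successive families of marks.
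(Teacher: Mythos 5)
Your proof is correct in outline and shares the paper's basic strategy (induction on $n$ with Proposition \ref{FormuleDerive} as the engine), but the inductive step as you execute it leaves open exactly the point you flag at the end, and the paper's proof uses a device that dissolves it. Instead of unfolding $\anni((\crea\,\cdot)^\flat)$ by hand for the functional $\tN(h)^{(n\sharp)}$ and then tracking which auxiliary marks the lent particle receives, the paper observes that $N\odot\rho^{\odot n}$ is itself a Poisson random measure on $[0,+\infty[\times\Xi\times R^n$ with intensity $dt\times\nu\times\rho^n$, takes for bottom space the product $\Xi\times R^n$ carrying the product of $(\bbd,e)$ with the trivial structure on $L^2(\rho^n)$, and notes that the upper structure built from these data is precisely the product of $(\bbD,\cE)$ with the trivial structure on $L^2(\hbbP^n)$, i.e. $\bbD(L^2(\hbbP^n))$. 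Since $h^{(n\flat)}$ is centered in the mark variables, $\tN(h)^{(n\sharp)}=N\odot\rho^{\odot n}(h^{(n\flat)})=\widetilde{N\odot\rho^{\odot n}}(h^{(n\flat)})$, so Proposition \ref{FormuleDerive} applies verbatim in this new structure: its gradient acts only on the $\Xi$-variable, whence $(h^{(n\flat)})^\flat=h^{((n+1)\flat)}$, and $(N\odot\rho^{\odot n})\odot\rho=N\odot\rho^{\odot(n+1)}$ by construction. This yields the level-$(n+1)$ formula with no explicit mark bookkeeping; the exchangeability and compatibility argument you anticipate having to write down is absorbed into the identification of the two Dirichlet structures, which is the one thing your version would still have to prove. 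A smaller remark: the $L^p$ bounds needed for $\tN(h)\in\bbD^{n,p}$ do not come from the Khintchine--Meyer inequalities of Part 4 (those compare $\|F^{(k\sharp)}\|_{L^p(\hbbP^k)}$ with $(\Gamma_k[F])^{1/2}$, not with the integrand); they come from standard moment estimates for compensated Poisson integrals together with the $\bigcap_{p\geq 1}L^p(\nu;L_0^{\ho n})$ bounds on $h^{(n\flat)}$ guaranteed by Hypothesis (C), before the concluding density argument that both you and the paper invoke.
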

\begin{proof} In fact, this is a direct consequence of Proposition \ref{FormuleDerive}. Indeed, for all $n\in\Ne$, take for {\it bottom space } 
the product space $\Xi\times R^n$ equipped with the Dirichlet structure which is the product structure of $(\bbd ,e)$ with the trivial ones on $L^2 (\rho^n )$. Then, 
following the same construction as above with $N$ replaced by $N\odot \rho^{\odot n}$, we obtain a Dirichlet structure on $L^2 (\bbP\times\hbbP^n )$. It is 
obvious that this structure is the product of $(\bbD ,\cE )$ with the trivial one on $L^2 (\hbbP ^n )$ so  we identify it with $\bbD (L^2 (\hbbP ^n ))$. Consider now $h\in L^2 (\R^+,dt)\otimes \bbd^{\infty}\bigcap L^1 (dt\times \nu \times \rho^n )$ then, 
as $h^{(n\flat )}$ takes its value in $L_0^n $, $N\odot \rho^{\odot n} (h^{(n\flat )})=\widetilde{N\odot \rho^{ \odot n}} (h^{(n\flat )})$, it is now easy to apply Proposition \ref{FormuleDerive} to get the expression of $\tN (h)^{((n+1)\sharp)}$ in this case and then to conclude using a density argument.
\end{proof}
\section{Identity of $\mathbb{D}^\infty(E)$ and $\overline{\mathbb{D}^\infty}(E)$ and Meyer inequalities in the classical cases}{\label{Meyer}}
This section is devoted to inequalities in $L^p$ norms. The first subsection gives a general  equivalence between gradient and carr\'e du champ operator due to Khintchine inequality and some of its improv{e}ments. The following subsections deal with the classical cases where the bottom space is either the Euclidean space equipped with the Laplacian or the Wiener space equipped with the Ornstein-Uhlenbeck operator.

\subsection{An equivalence of norms} 
Let us first emphasize that when we write $F^{\sharp\sharp}$ the second $\sharp$-operator acts on $F^\sharp(\omega, \hat{\omega}_1)$ with fixed $\hat{\omega}_1$ and adds a new $\hat{\omega}_2$ independently. We write $\hat{\mathbb{E}}$ for the expectation w.r. to all these $\hat{\omega}_1$, $\hat{\omega}_2$ etc, in other words $\hE$ denotes the expectation with respect to $\hbbP^{\otimes\Ne}$.\\
Now, we introduce the following notation for any $F\in \bbD^{k,2}$:
\begin{equation}\label{gammak}
\Gamma_k[F]=\hat{\mathbb{E}}[(F^{(k\sharp )})^2].
\end{equation}  This is a general definition of carr\'e du champ operators of order $k$ (cf. P.-A. Meyer s\'em XVIII \cite{meyer18} p182 in the Ornstein-Uhlenbeck case where operators $\Gamma_k$ satisfy a specific recurrence relation due to a commutation identity, that we do not suppose in this subsection).\\
The aim of this subsection is to prove that by choosing well the probability space $(R,\cR, \rho)$ and the version of the gradient on the bottom space then 
the norm on $\bbD^{k,p}$ is equivalent to the following  norm
$$ \| F\|_{L^p (\bbP )}+\sum_{i=1}^k \| (\Gamma_i [F])^{1/2}\|_{L^p (\bbP )}.$$

\begin{Pro} \label{equiv-p} We can choose $(R,\cR ,\rho)$ and the gradient operator $\flat$ such that for all $k\in\Ne$, $p> 1$ the following inequality holds for any $F\in\bbD^{k,p}$: 
\begin{equation}\label{occ-Lp}
c_{p,k}(\Gamma_k[F])^{1/2}\leq \|F^{(k\sharp )}\|_{L^p(\hat{\mathbb{P}}^k) }\leq C_{p,k}(\Gamma_k[F])^{1/2}\ {\quad \mathbb{P}\mbox{-a.s.}},
\end{equation}
where $c_{p,k}$ and $C_{p,k}$ are  constants only depending on $p$ and $k$.
\end{Pro}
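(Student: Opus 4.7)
The plan is to choose $(R,\cR,\rho)$ and a version of $\flat$ so that, conditionally on $w\in\Om$ and on the atoms of $N_w$, the variable $F^{(k\sharp)}(w,\cdot)$ sits in a fixed Rademacher chaos of order $k$ in i.i.d.\ symmetric variables; the announced $L^p$--$L^2$ equivalence will then reduce to a classical hypercontractivity (Khintchine--Kahane, Bonami--Beckner) estimate applied fibrewise in $w$. Concretely I would take $R=\{-1,+1\}^{\bbN}$ with $\rho$ the symmetric Bernoulli product measure, and denote the coordinate Rademacher maps by $(\varepsilon_j)_{j\ge 1}$. The closed subspace $\cH_1\subset L_0$ spanned by $(\varepsilon_j)_{j\ge 1}$ is isometric to $\ell^2$, so separability of $\bbd$ permits me to realise the Hilbertian gradient of Bouleau--Hirsch \cite{bouleau-hirsch2} inside $\cH_1$, giving a version of the form $u^{\flat}(x,r)=\sum_{j\ge 1}D_j u(x)\,\varepsilon_j(r)$ with $\sum_j(D_j u)^2=\gamma[u]$, still satisfying the chain rule and the other structural properties listed in Section \ref{notations}.

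For $k=1$, starting from $F\in\bbD_0$ and using formula \eqref{formulegradient}, substitution of the above Rademacher version of $\flat$ yields, for $\bbP$-a.e.\ $w$,
\[F^{\sharp}(w,\hat w_1)=\sum_{i}\sum_{j\ge1}c_{i,j}(w)\,\varepsilon_j(R^1_i(\hat w_1)),\]
where $i$ indexes the atoms $(t_i,u_i)$ of $N_w$ and the coefficients $c_{i,j}(w)$ satisfy $\sum_{i,j}c_{i,j}(w)^2=\Gamma[F](w)$ by \eqref{formuleocc}. Since the family $(\varepsilon_j(R^1_i))_{i,j}$ consists of jointly independent Rademacher variables under $\hbbP$, Khintchine's inequality yields the two bounds $c_p\,\Gamma[F](w)^{1/2}\le\|F^{\sharp}(w,\cdot)\|_{L^p(\hbbP)}\le C_p\,\Gamma[F](w)^{1/2}$ with $p$-dependent constants, which is \eqref{occ-Lp} for $k=1$.

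For general $k$ I would argue by induction, noting that $F^{(k\sharp)}=(F^{((k-1)\sharp)})^{\sharp}$ and that each application of $\sharp$, combined with Lemma \ref{DeriveSimple}, contributes one new independent Rademacher factor; thus for $F\in\bbD_0$ and fixed $w$, $F^{(k\sharp)}$ is a homogeneous Rademacher chaos of degree $k$ in the independent family $(\varepsilon_j(R^{\ell}_i))_{i,j,\ell}$, with $L^2(\hbbP^k)$-norm exactly $\Gamma_k[F](w)^{1/2}$. The Bonami--Beckner hypercontractive inequality (together with its converse by duality for $1<p<2$) then delivers constants $c_{p,k}$, $C_{p,k}$ depending only on $p$ and $k$ for which \eqref{occ-Lp} holds, and the passage from $\bbD_0$ to all of $\bbD^{k,p}$ is obtained by density, extracting a subsequence so that convergence of $F_n^{(k\sharp)}$ to $F^{(k\sharp)}$ in $L^p(\bbP\times\hbbP^k)$ becomes $L^p(\hbbP^k)$-convergence $\bbP$-a.s.\ in $w$. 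The step I expect to require the most care is the first one: exhibiting a measurable first-chaos realisation of $\flat$ that is a genuine gradient of the bottom form in the Bouleau--Hirsch sense and is compatible with the chain rule; once such a version is in place, everything else is a routine fibrewise application of hypercontractivity.
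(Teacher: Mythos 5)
Your proposal is correct and follows essentially the same route as the paper: choose the gradient with values in the span of a Rademacher sequence, observe that $F^{(k\sharp)}$ is then, fibrewise in $w$, a degree-$k$ chaos in independent Rademacher variables with $L^2(\hbbP^k)$-norm $\Gamma_k[F]^{1/2}$, apply Khintchine for $k=1$ and the Borell--Bonami (Bonami--Beckner hypercontractivity) moment comparison for $k>1$, and finish by density. You also correctly flag the one subtlety the paper itself emphasizes, namely that plain Khintchine does not suffice for $k>1$.
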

\begin{proof} Take $(R,\cR ,\rho)$ such that a sequence $(\xi_i )_{i\in\Ne}$ of Rademacher functions may be defined on it. The simplest choice is to take 
$R=[0,1]$, $\rho$ the Lebesgue measure and for $(\xi_i )$ the standard Rademacher functions. We recall that $(\xi_i )_i$ is a sequence of i.i.d. variables 
defined on $L^2 (\rho )$ and such that $\rho (\xi_i =1)=\rho (\xi_i =-1 )=1/2$. Then we choose the version of the gradient $\flat$ with values in $V$, the vector space spanned by the  $(\xi_i )_i$. It is clear that $V\subset L^2_0 (\rho )$. Hence we have the following decomposition for any $f\in\bbd^{\infty}$
\begin{eqnarray}{\label{gradBase1}}f^{(\flat )} (u,r)=\sum_{i} \langle f^{(\flat )} (u,\cdot ),\xi_{i}\rangle_{L^2 (\rho )}\xi_{i} (r).\end{eqnarray}
Consider now $F=\varphi (\tN (f_1 ),\cdots,\tN (f_n))$ in $\bbD_0$, then
$$F^{(\sharp )} =N\odot \rho (J_1 ),$$
where $
J_1 (t,x,r)=\sum_{j=1}^n \partial_j \varphi(\tN (f_1 ),\cdots,\tN (f_n)) f_i (t,x,r).$\\
Applying \eqref{gradBase1} , this yields
$$J_1 (t,u,r)=\sum_{i} D_{i} (t,u)\xi_{i}(r),$$
where $D_{i}(t,u)=\langle J_1 (t,u,\cdot ),\xi_{i}\rangle_{L^2 (\rho )}$.\\
 We have
\begin{eqnarray*}
F^\sharp &=&  N\odot \rho (J_1 )
=\sum_{j=1}^{Y}J_1(T_j ,X_j, R_j )=\sum_{j=1}^{Y} \sum_{i=1}^{+\infty}D_i (T_j ,X_j )\xi_i (R_j )
\end{eqnarray*}
where  $(T_j ,X_j )$ is the sequence of marked points of the random measure $N$ and $Y$ is the number of jumps in $\Xi$ before $T$ ($Y=+\infty$ a.s.  iff $\nu (\Xi )=+\infty$) and $(R_j)_{ j\geq 1}$ is a  sequence of independent random variables with common law $\rho$ independent of $(T_j ,X_j )$ (see the construction of $N\odot\rho$ above). But, it is obvious that the sequence 
$\left( \xi_{i}(R_j)\right)_{i,j\in\Ne }$ is a Rademacher sequence of independent variables defined on the probability space $L^2 (\hat{\Omega},\hbbP )$ so independent of the $(T_j, X_j )_{j\in\Ne}$ and of $Y$. Then Khintchine inequality (cf. \cite{ledoux-talagrand} p. 91) yields: 
$$ A_p \| F^\sharp\|_{L^2 (\hbbP )}\leq \| F^\sharp\|_{L^p (\hbbP )}\leq B_p \| F^\sharp\|_{L^2 (\hbbP)},$$
in other words:
$$A_p \left( \Gamma [F]\right)^{1/2}\leq \| F^{\sharp }\|_{L^p (\hbbP)}\leq B_p \left( \Gamma [F]\right)^{1/2}\quad  {\quad \mathbb{P}\mbox{-a.s.}}$$
where $A_p=c_{p,1}$ and $B_p=C_{p,1}$ are the constants in Khintchine's inequality. Using a density argument, we get the result in the case $k=1$.\\
For $k>1$ we {cannot follow strictly the same argument because Khintchine inequalities are not relevant in that case (it seems that some authors do not detect this difficulty, which demands using stronger inequalities, see below).}
As above we first consider $F=\varphi (\tN (f_1 ),\cdots,\tN (f_n))$ in $\bbD_0$.  By iteration:
$$F^{(k\sharp )} =N\odot \rho^{\odot k} (J_k ),$$
where $J_k \in L^2 (\Omega )\otimes L^2 ([0,T])\otimes \bbd^{\infty} (L^2 (R^k ))$. \\
We have:
$$J_k (t,u,r_1 ,\cdots ,r_k )=\sum_{i_1 ,\cdots ,i_k} D_{i_1 ,\cdots ,i_k} (t,u)\xi_{i_1}(r_1)\xi_{i_2}(r_2 )\cdots \xi_{i_k}(r_k),$$
where $D_{i_1 ,\cdots ,i_k}(t,u)=\langle J_k (t,u,\cdot ),\xi_{i_1}\xi_{i_2}\cdots \xi_{i_k}\rangle_{L^2 (\rho^k )}$.\\
Then, with the same notations as above:
\begin{eqnarray*}
F^{(k\sharp)} &=&  N\odot \rho^{\odot k} (J_k )=\sum_{j=1}^{Y}J_k(T_j ,X_j, R^{1}_j ,R^{2}_j ,\cdots ,R^{k}_j )\\ &=&\sum_{j=1}^{Y} \sum_{i_1 ,\cdots ,i_k}D_{i_1 ,\cdots ,i_k} (T_j ,X_j )\xi_{i_1} (R^{1}_j )\xi_{i_2} (R^{2}_j )\cdots \xi_{i_k} (R^{k}_j).
\end{eqnarray*}
Here $(R_j^{l})_{ j\geq 1}$, $l\in \{ 1,\cdots ,k\}$ are $k$ independent  sequences of independent random variables with common law $\rho$ thus the sequence 
$\left( \xi_{i}(R^{l}_j)\right)_{l\leq k; i,j\geq 1 }$ is a Rademacher sequence of independent variables defined on the probability space $L^2 (\hat{\Omega}^k ,\hbbP^k )$.
To conclude as in the case $k=1$, we need some Khintchine's inequalities for $k$-fold products of Rademacher functions. These inequalities are due to C. Borell and A. Bonami. More precisely, as a consequence of Theorem 1.1. in \cite{pisier}, we have:\\ 
If $p\in ]1,2[$:\\
$$(p-1)^{\frac{k}{2}}\| F^{(k\sharp )}\|_{L^2 (\hbbP^k)}\leq \| F^{(k\sharp )}\|_{L^p (\hbbP^k)}\leq  \| F^{(k\sharp )}\|_{L^2 (\hbbP^k)}$$
i.e.
$$ (p-1)^{\frac{k}{2}}\left( \Gamma_k [F]\right)^{1/2}\leq \| F^{(k\sharp )}\|_{L^p (\hbbP^k)}\leq \left( \Gamma_k [F]\right)^{1/2}\quad {\quad \mathbb{P}\mbox{-a.s.}}$$
And if $p>2$:
$$\| F^{(k\sharp )}\|_{L^2 (\hbbP^k)}\leq \| F^{(k\sharp )}\|_{L^p (\hbbP^k)}\leq (p-1)^{\frac{k}{2}} \| F^{(k\sharp )}\|_{L^p (\hbbP^k)}$$
i.e.
$$\left( \Gamma_k [F]\right)^{1/2}\leq \| F^{(k\sharp )}\|_{L^p (\hbbP^k)}\leq  (p-1)^{\frac{k}{2}}\left( \Gamma_k [F]\right)^{1/2}\quad {\quad \mathbb{P}\mbox{-a.s.}}$$
So in all the cases, we have proved the desired inequality for $F\in\bbD_0$,  it is easy to conclude by density.\end{proof}
\noindent {\bf Remark:} 
There is a strong similarity about the Khintchine-like inequalities between Rademacher functions and Gaussian variables. Now, in the case $k=1$ the argument above extends to 
a Gaussian basis and gives even a stronger result:
$$ \| F^{\sharp }\|_{L^p (\hbbP)}=\kappa_p \left( \Gamma [F]\right)^{1/2},$$
where $\kappa_p =\left( \frac{1}{\sqrt{2\pi}}\int_{\R}|x|^p e^{-x^2 /2}\, dx\right)^{1/p}$.\\
 In the case $k>1$, inequalities like Borell-Bonami for Gaussian variables are known (see for example \cite{latala}) hence yield the same result.
\subsection{ Meyer inequalities in the Euclidean case}{\label{4.2}}
We consider now the classical case where $\nu$ is the Lebesgue measure $dx$ on $\mathbb{R}^d$ and $a=\frac{1}{2}\Delta$.

We  start from the Stein inequalities  about Riesz transfoms \cite{stein}. For $f$ say in $\mathcal{S}(\mathbb{R}^d)$ the Riesz transforms are defined by the Fourier multiplier 
$$\widehat{R_jf}(\xi)=\frac{i\xi_j}{(\sum_1^d\xi_j^2)^{1/2}}\hat{f}(\xi)$$
i.e. symbolically  $$
R_jf=\frac{\partial}{\partial x_j}(\frac{1}{\sqrt{-\Delta}}f)$$
\begin{Th}{\rm(E.M. Stein)} For $1<p<\infty$, there are constants $C_p$, $c_p$, depending only on $p$ such that $\forall f\in \mathcal{S}(\mathbb{R}^d)$ 
\begin{equation}\label{stein}
c_p\|f\|_p\leq\|(\sum_1^d|R_jf|^2)^{1/2}\|_p\leq C_p\|f\|_p
\end{equation}
\end{Th}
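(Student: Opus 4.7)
The statement is the classical Stein theorem on Riesz transforms, so my plan is the standard two-step proof: establish the upper bound by Calder\'on–Zygmund theory in vector-valued form, then deduce the lower bound by duality using the algebraic identity $-\sum_{j=1}^d R_j^2=I$. The base case $p=2$ in both directions is immediate from Plancherel, because the Fourier symbol of the map $T:f\mapsto (R_1f,\dots,R_df)$ is the unit vector $i\xi/|\xi|$, so $\sum_j|\widehat{R_jf}(\xi)|^2=|\hat f(\xi)|^2$; this gives the $L^2\to L^2(\ell^2)$ isometry of $T$ and, simultaneously, the identity $\sum_j R_j^2 f=-f$.

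For the upper bound, I would treat $T$ as a singular integral operator with values in $\ell^2$, whose vector kernel $K(x)=c_d\,x/|x|^{d+1}$ is a Calder\'on–Zygmund kernel (satisfying $|K(x)|\leq C|x|^{-d}$ and the H\"ormander smoothness condition $|\nabla K(x)|\leq C|x|^{-d-1}$, now read with $\ell^2$ norms). Combining the $L^2\to L^2(\ell^2)$ bound from Plancherel with the vector-valued Calder\'on–Zygmund theorem yields $\|Tf\|_{L^p(\ell^2)}\leq C_p\|f\|_p$ for all $1<p<\infty$ (the case $p>2$ comes from the weak $(1,1)$ estimate and Marcinkiewicz interpolation, while $1<p<2$ follows by duality since the formal adjoint $T^*$ has the same structure).

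For the lower bound, I would exploit anti-self-adjointness $R_j^*=-R_j$ (the symbol is purely imaginary) together with $\sum_j R_j^2=-I$. Given $f,g\in\mathcal{S}(\mathbb{R}^d)$,
\begin{equation*}
\langle f,g\rangle=-\sum_j\langle R_j^2 f,g\rangle=\sum_j\langle R_jf,R_jg\rangle\leq\|Tf\|_{L^p(\ell^2)}\|Tg\|_{L^{p'}(\ell^2)}.
\end{equation*}
Taking the supremum over $g$ with $\|g\|_{p'}\leq 1$ and applying the already-proved upper bound at the dual exponent $p'$ gives $\|f\|_p\leq C_{p'}\|Tf\|_{L^p(\ell^2)}$, i.e.\ the constant $c_p=1/C_{p'}$.

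The main technical obstacle is the vector-valued Calder\'on–Zygmund step: one must set up a Calder\'on–Zygmund decomposition at level $\lambda$, split $f=g+b$ with $b=\sum_Q b_Q$ supported on Whitney cubes, and carry out the usual cancellation argument with the $\ell^2$-valued kernel, which requires controlling $\int_{|x-y|\geq 2\ell(Q)}|K(x-y)-K(x-y_Q)|_{\ell^2}\,dx$ by a constant. This is routine but is where all the geometric content of the proof lives. An alternative (in the spirit of this paper) would be to bypass singular integrals and use the Littlewood–Paley $g$-function for the Poisson semigroup via the subordination identity $R_j=-\int_0^\infty\partial_j P_t(\,\cdot\,)\,dt/\sqrt{\pi t}$, but this only shifts the difficulty to the $L^p$ boundedness of $g$, which itself ultimately rests on similar semigroup estimates.
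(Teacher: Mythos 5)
Your outline is the standard Calder\'on--Zygmund proof of the $L^p$ boundedness of the Riesz vector, and the duality step (using $R_j^*=-R_j$ and $\sum_j R_j^2=-I$ to convert the upper bound at $p'$ into the lower bound at $p$) is correct. Note, though, that the paper does not prove this theorem at all: it is quoted from Stein's 1983 Bulletin note, and the entire point of citing that particular paper is that the constants $c_p,C_p$ depend \emph{only on $p$ and not on the dimension $d$}. This is not a cosmetic issue here: in the very next paragraph the authors apply the inequality in dimension $kd$ with the product measure $\bigl(\frac{1}{|A|}dx|_A\bigr)^k$ for every $k$ and then sum over $k$, which is only legitimate because the constants do not change with the dimension.

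This is where your proof has a genuine gap. The vector-valued Calder\'on--Zygmund route you describe produces constants that blow up with $d$: the weak $(1,1)$ bound coming from the Calder\'on--Zygmund decomposition, and in particular the H\"ormander integral $\int_{|x|\geq 2|y|}\bigl|K(x-y)-K(x)\bigr|_{\ell^2}\,dx$ for the kernel $c_d\,x/|x|^{d+1}$, carry dimensional constants, so you end up with $C_{p,d}$ rather than $C_p$. (Minor additional slip: interpolation of the weak $(1,1)$ bound with the $L^2$ bound handles $1<p<2$, and duality then gives $p>2$ --- you have the two roles reversed.) Dimension-free bounds require a different mechanism: Stein's own argument via the Littlewood--Paley $g$-function for the Poisson semigroup with careful constant tracking, or the later probabilistic proofs (Gundy--Varopoulos background radiation, martingale transforms as in Ba\~nuelos--Wang), or the Duoandikoetxea--Rubio de Francia / Pichorides approach. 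The semigroup route you mention and set aside as ``only shifting the difficulty'' is in fact the one that delivers the dimension-independence the statement asserts and the paper's application requires.
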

\noindent  Applying (\ref{stein}) to $\sqrt{-\Delta}f$ we have
\begin{equation}\label{stein2}
c_p\|\sqrt{-\Delta}f\|_p\leq\|(\sum_1^d|\partial_jf|^2)^{1/2}\|_p\leq C_p\|\sqrt{-\Delta}f\|_p
\end{equation}
Let us remark that the functions $F=e^{iNf}$ with $f$ infinitely differentiable with support in a set $A$ of finite Lebesgue measure generate an algebra $\mathbf{ L}$ dense in every $L^p(\mathbb{P})$, cf Lemma 8 of \cite{bouleau-denis}.

Such a random variable may be written 
$F=\sum_k1_{\{Y=k\}}F_k(X_1,\ldots,X_k)$ where $Y$ follows a Poisson distribution with parameter $|A|$ and where the $X_\ell$ are i.i.d. with distribution $\frac{1}{|A|}dx|_A$.

In the present classical case the operators $\nabla$ and $\Delta$ tensorize so that we have $\Gamma[F]=\sum_k1_{\{Y=k\}}\sum_{j=1}^k(\partial_jF_k)^2(X_1,\ldots,X_k)$ and
 $AF=\sum_k1_{\{Y=k\}}\Delta F_k(X_1,\ldots,X_k)$. 
 
 This implies $\sqrt{-A}F=\sum_k1_{\{Y=k\}}\sqrt{-\Delta} F_k(X_1,\ldots,X_k)$. So that we can write
$$
\mathbb{E}|\Gamma[F]|^p=\sum_k\mathbb{P}{\{Y=k\}}\mathbb{E}|\sum_{j=1}^k(\partial_jF_k)^2(X_1,\ldots,X_k)|^{p/2}
\leq C_p^p\sum_k\mathbb{P}{\{Y=k\}}\mathbb{E}|\sqrt{-\Delta_k}F_k|^p$$
where we have applied the inequality  (\ref{stein2}) in dimension $kd$ with the measure $(\frac{1}{|A|}dx|_A)^k$ what does not change the constants. So
$$\mathbb{E}|\Gamma[F]|^p\leq 
C_p^p\mathbb{E}|\sum_k1_{\{Y=k\}}\sqrt{-\Delta_k}F_k|^p=C^p_p\mathbb{E}|\sqrt{-A}F|^p
$$
and finally by density and similarly on the left hand side, we obtain 
\begin{Pro}\label{pro19} a) $ \forall F\in\mathbb{D}$, with the same constants as in {\rm(\ref{stein})}
\begin{equation}\label{meyer-poisson}
c_p\|\sqrt{-A}F\|_p\leq\|\sqrt{\Gamma[F]}\|_p\leq C_p\|\sqrt{-A}F\|_p
\end{equation}
b) For $F$ in the closure of $\mathbf{L}$ for the norm $\|F^{k\sharp}\|_p$, 
\begin{equation}\label{inegalitesdemeyer}c_p\|(-A)^{k/2}F\|_p\leq\|\sqrt{\Gamma_k[F]}\|_p\leq C_p\|(-A)^{k/2}F\|_p.\end{equation}
\end{Pro}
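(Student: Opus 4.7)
The plan is to complete the derivation already sketched in the text for part~(a) and then iterate it for part~(b). For part~(a), essentially everything has been done on the generating algebra $\mathbf{L}$: each $F\in\mathbf{L}$ fibres as $F=\sum_\ell\mathbf{1}_{\{Y=\ell\}}F_\ell(X_1,\ldots,X_\ell)$, the bottom operators $\Delta$ and $\nabla$ tensorize, and Stein's inequality~\eqref{stein2} applied fibrewise in dimension $\ell d$ with the product reference measure (whose renormalization by $|A|^\ell$ leaves the constants unchanged) gives the two-sided bound with constants $c_p,C_p$. To promote this to all $F\in\mathbb{D}$ it suffices to note that $\mathbf{L}$ is a core for $\sqrt{-A}$ in $L^p$: indeed $\mathbf{L}\subset\mathcal{D}_0\subset\mathcal{D}(A)$, and $\mathcal{D}(A)$ is dense in $\mathbb{D}$ by the Friedrichs construction, while Corollary~\ref{Gamma} guarantees that $F\mapsto\sqrt{\Gamma[F]}$ is a well-defined operation on $\mathbb{D}^{\infty}$. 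Closure of both sides in $L^p$ then transfers the inequality.

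For part~(b), I would run the same scheme on the iterated gradient. In the Euclidean case the natural Rademacher version of the bottom gradient is $f^{\flat}(x,r)=\sum_{i=1}^d\partial_i f(x)\,\xi_i(r)$, and by iteration
$$
f^{(k\flat)}(x,r_1,\ldots,r_k)=\sum_{i_1,\ldots,i_k=1}^d\partial^k_{i_1\cdots i_k}f(x)\,\xi_{i_1}(r_1)\cdots\xi_{i_k}(r_k).
$$
Combined with Lemma~\ref{DeriveSimple} and the fibred structure of $\mathbf{L}$, this expresses $F^{(k\sharp)}$ for $F\in\mathbf{L}$ as a sum over $k$-tuples of jumps in which each summand is a $k$-th order partial derivative of some $F_\ell$ evaluated at jump points, weighted by a Rademacher tensor in $\hat{\Omega}^k$. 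Orthogonality of the Rademacher tensors under $\hE$ then yields, fibrewise on $\{Y=\ell\}$,
$$
\Gamma_k[F]=\sum_{i_1,\ldots,i_k}\bigl(\partial^k_{i_1\cdots i_k}F_\ell(X_1,\ldots,X_\ell)\bigr)^2
$$
up to the usual combinatorial multiplicities.

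The analytic input needed is then a higher-order Stein bound, obtained by iterating~\eqref{stein}: the pure Riesz transforms of order $k$, namely $\partial^\alpha(-\Delta)^{-k/2}$ with $|\alpha|=k$, are bounded on $L^p$ for $1<p<\infty$, so that for $f\in\mathcal{S}(\mathbb{R}^N)$
$$
c_{p,k}\|(-\Delta)^{k/2}f\|_p\leq\Bigl\|\Bigl(\sum_{|\alpha|=k}|\partial^\alpha f|^2\Bigr)^{1/2}\Bigr\|_p\leq C_{p,k}\|(-\Delta)^{k/2}f\|_p.
$$
Applied in dimension $\ell d$ to $F_\ell$ and averaged against $\mathbb{P}\{Y=\ell\}$, exactly as in the case $k=1$, this gives~\eqref{inegalitesdemeyer} on $\mathbf{L}$; the density hypothesis in~(b), namely closure of $\mathbf{L}$ under $\|F^{(k\sharp)}\|_p$, extends the inequality to the full stated class.

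The main obstacle I foresee is the bookkeeping in the iterated-gradient formula: verifying carefully that the expansion of $F^{(k\sharp)}$ on Rademacher tensors really identifies $\Gamma_k[F]$, fibrewise, with $\sum_{|\alpha|=k}(\partial^\alpha F_\ell)^2$, and that the combinatorial prefactors appearing in the Leibniz-type expansion of $F^{(k\sharp)}$ do not pollute the Stein constants into something dimension-dependent. Once this accounting is settled, the $L^p$ estimates reduce to iterated Stein bounds fibre by fibre, followed by the same density argument as in part~(a).
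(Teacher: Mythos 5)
Your proposal is correct and follows essentially the same route as the paper: the fibred representation $F=\sum_\ell\mathbf{1}_{\{Y=\ell\}}F_\ell(X_1,\ldots,X_\ell)$ on the algebra $\mathbf{L}$, tensorization of $\nabla$ and $\Delta$, the fibrewise Stein inequality in dimension $\ell d$, and a closure argument. For part (b) the paper merely asserts that the proof ``follows exactly the same lines''; you supply the details it omits (the Rademacher-tensor expansion of $F^{(k\sharp)}$ identifying $\Gamma_k[F]$ fibrewise with the square function of the full $k$-th derivative, and the iterated, dimension-free Stein bound), and you correctly flag the only delicate point, namely the bookkeeping of multiplicities and the fact that the constants become $k$-dependent rather than literally $c_p,C_p$.
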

The proof of part b) of the proposition follows exactly the same lines thanks to the representation $F=\sum_k1_{\{Y=k\}}F_k(X_1,\ldots,X_k)$ on the algebra $\mathbf{ L}$.\\
\noindent {\bf Remarks:} \\
1- This inequality has been obtained by  L. Wu \cite{wu1} following a way suggested by P.-A. Meyer of approximating $\mathbb{R}^d$ by the torus.\\
2- Let us emphasize that the tensorization property of $\nabla$ and $\Delta$ is crucial in this result. For example if we had started with the operator $\sqrt{-\Delta}$ on the bottom space in the role of $a$, we would not have obtained on the upper space the operator $\sqrt{-A}$. Subordination doesn't commute with tensorization.
\\
3- Inequalities (\ref{inegalitesdemeyer}) and (\ref{occ-Lp}) imply the identity of the spaces $\mathbb{D}^\infty(E)$ and $\overline{\mathbb{D}^\infty}(E)$.
\subsection{Other cases of transfer of inequalities on the Poisson space}{\label{4.3}}
a) Suppose that the bottom space is an abstract Wiener space equipped with the Ornstein-Uhlenbeck structure, then Meyer inequalities hold on all tensor products of the bottom space which are still abstract Wiener spaces with the same constants. In that case where the bottom space is a probability space, Meyer's inequalities are preserved by product and lift to the Poisson space.

 In this case, the logarithmic Sobolev inequalities hold on the bottom space. These inequalities tensorize with the same constants (cf. for instance \cite{ane-et-al}). The measure $\nu$ is finite and the semigroup $P_t$ associated with $A$ acts  as $P_t F=\sum_k1_{\{Y=k\}}P_t[F_k(X_1,\ldots,X_k)]$. The logarithmic Sobolev inequalities hold above. This shows that the hypercontractivity property holds on the Poisson space.

\noindent b) Suppose the bottom space satisfies the Bakry hypothesis
$$a\gamma[f]-2\gamma[f,af]\geq 0$$
then (cf Bakry \cite{bakry} and Bakry-Emery \cite{bakry-emery}) the Meyer inequalities hold down. Then, if 

(i) the operator $a$ tensorizes, i.e. on  functions $\sigma(N|_B)$-measurable for some set $B$ such that $\nu(B)<\infty$, the operator $A$  writes $AF=\sum_k1_{\{Y=k\}}(a_kF_k)(X_1,\ldots,X_k)$ where $$a_k F_k=a[F(.,X_2,\ldots,X_k)](X_1)+\cdots+a[F_k(X_1,\ldots,X_{k-1},.)](X_k)$$
\indent(ii) the inequalities hold on the products on the bottom space with the same constants, 

\noindent then the Meyer inequalities transfer on the Poisson space.\\
\noindent{\bf Remark:}  In the cases considered in Subsection \ref{4.2} and in this subsection, spaces $\mathbb{D}^\infty(E)$ and $\overline{\mathbb{D}^\infty}(E)$ are identical. The identity of these spaces  extends of course  to the situation where the bottom space is  a product with a new  factor carrying a null Dirichlet form. We will have such a case in the examples below.

\section{Criterion of smoothness for the law of Poisson functionals}
\begin{Le}{\label{Inverse}} Let $X\in \bbD^{\infty}$ be positive and such that $\displaystyle\frac1X \in \bigcap_{p\geq 1} L^p (\bbP)$, then
 \[\displaystyle\frac1X \in\bbD^{\infty}.\]
 \end{Le}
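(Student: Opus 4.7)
The plan is to realize $1/X$ as the limit, in every Sobolev space $\bbD^{n,p}$, of smooth bounded functions of $X$ that are already known to belong to $\bbD^\infty$. For $\epsilon>0$ fix $g_\epsilon\in C^\infty(\R)$ bounded with bounded derivatives of all orders such that $g_\epsilon(x)=1/(x+\epsilon)$ for every $x\geq 0$, and set $Y_\epsilon:=g_\epsilon(X)$. My first step is to show that $Y_\epsilon\in\bbD^\infty$ and to establish a Fa\`a di Bruno-type formula
\[
Y_\epsilon^{(k\sharp)}=\sum_{\pi\in\mathcal{P}_k}c_\pi\,g_\epsilon^{(|\pi|)}(X)\prod_{B\in\pi}X^{(|B|\sharp)}_B,
\]
where $\mathcal{P}_k$ is the set of set-partitions of $\{1,\ldots,k\}$ and $X^{(|B|\sharp)}_B$ stands for the iterated gradient of $X$ evaluated at the $\hat\omega$-variables indexed by~$B$. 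I would do this by induction on $k$, using at each step the gradient formula \eqref{formulegradient}, the bottom-space chain rules, and the Leibniz rule that underlies Lemma \ref{algebre}. For $X\in\bbD_0$ the identity is direct, and by density of $\bbD_0$ in $\bbD^{n,p}$ it extends to $X\in\bbD^\infty$.

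Next I would check that $(Y_\epsilon)_{\epsilon>0}$ is Cauchy in every $\bbD^{n,p}$. Since $g_\epsilon^{(j)}(x)=(-1)^j j!/(x+\epsilon)^{j+1}$ on $[0,+\infty)$, we have $|g_\epsilon^{(j)}(X)|\leq j!/X^{j+1}$ and $g_\epsilon^{(j)}(X)\to(-1)^j j!/X^{j+1}$ pointwise as $\epsilon\downarrow 0$. The hypothesis $1/X\in\bigcap_{p\geq 1} L^p(\bbP)$ gives $1/X^{j+1}\in\bigcap_{p\geq 1} L^p(\bbP)$, so by dominated convergence $g_\epsilon^{(j)}(X)$ converges in every $L^p(\bbP)$ as $\epsilon\to 0$. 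Since $X\in\bbD^\infty$, each $X^{(|B|\sharp)}$ lies in every $L^q(\bbP\times\hbbP^{|B|})$; applying H\"older's inequality term by term in the Fa\`a di Bruno sum yields convergence of $Y_\epsilon^{(k\sharp)}$ in $L^p(\bbP\times\hbbP^k)$ for every $k\leq n$ and every $p\geq 1$.

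By completeness, $Y_\epsilon$ then converges in $\bbD^{n,p}$ to some limit $Z$. Since $Y_\epsilon=1/(X+\epsilon)\leq 1/X\in L^p(\bbP)$ and $Y_\epsilon\to 1/X$ pointwise, dominated convergence also gives $Y_\epsilon\to 1/X$ in $L^p(\bbP)$; hence $Z=1/X$ a.s. This proves $1/X\in\bbD^{n,p}$ for every $n$ and every $p\geq 1$, i.e.\ $1/X\in\bbD^\infty$.

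The main obstacle is the first step: rigorously justifying the Fa\`a di Bruno formula at every order in this Dirichlet form framework, and checking along the induction that the resulting terms live in the right space of $\bbP\times\hbbP^k$ so that the next application of $\sharp$ is legitimate. Once that iterated chain rule is in place, the rest of the argument reduces to a clean dominated-convergence / H\"older estimate made possible precisely by the integrability hypothesis on $1/X$.
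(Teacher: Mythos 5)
Your proposal is correct and follows essentially the same route as the paper: approximate $1/X$ by $1/(\varepsilon+X)$, compute its iterated gradients by the chain rule (the paper writes out the first two orders explicitly and iterates, which is exactly your Fa\`a di Bruno expansion), dominate the terms by powers of $1/X$ using the integrability hypothesis, and pass to the limit in each $\bbD^{n,p}$ by dominated convergence. Your write-up is, if anything, slightly more explicit about the combinatorial form of the iterated chain rule and about identifying the limit.
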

 \begin{proof} For all $\varepsilon >0$, we put:
 \[ X_{\varepsilon}=\displaystyle\frac{1}{\varepsilon +X}.\]
 As the map $\varphi_{\varepsilon}:x\mapsto \frac{1}{\varepsilon +x}$ is infinitely differentiable on $[0 ,+\infty[$ with 
 bounded derivatives of any order, $X_{\varepsilon}$ belongs to $\bbD^{\infty}$. Indeed, first $\varphi_{\varepsilon} (X)$ belongs to 
 $\bbD$ and by the chain rule and the hypotheses we made:
 \[ X_{\varepsilon}^\sharp =\displaystyle\frac{-X^{\sharp}}{(\varepsilon+X)^2}\in\bigcap L^p (\bbP \times\hbbP ),\]
 so $X_{\varepsilon}\in \bigcap_{p\geq 1} \bbD^{1,p}$.\\
 As $X^\sharp \in \bbD (L^2 (\hbbP))$ we deduce as a consequence of Lemma \ref{algebre} that $X_{\varepsilon}^\sharp$ belongs to 
 $\bbD (L^2 (\hbbP ))$ and still by the chain rule:
 \[ X_{\varepsilon}^{2\sharp}=\displaystyle\frac{-X^{(2\sharp )}}{(\varepsilon+X)^2}+2\displaystyle\frac{X^{\sharp}X^\sharp}{(\varepsilon+X)^3}\in \bigcap_{p\geq 1} L^p (\bbP\times \hbbP^{ 2}),\]
 this ensures that $X_{\varepsilon} \in \bigcap_{p\geq 1} \bbD^{2,p}$.\\
 Then by iteration, we obviously get that  $X_{\varepsilon}^{(n\sharp )}\in \bigcap_{p\geq 1} L^p (\bbP \times \hbbP^{ n})$ for all $n\in\N$ so $X_{\varepsilon}\in\bbD^{\infty}$.\\
  Moreover, by the dominated convergence theorem, it is clear that for all $n\in\N$, $X_{\varepsilon}^{(n\sharp )}$ converges in $L^p (\bbP \times \hbbP^{n})$ as $\varepsilon$ goes to $0$. So $X_{\varepsilon}$ converges in $\bbD^{\infty}$ equipped with its natural Fr\'echet topology to an element which is nothing but $X$ and this ends the proof.
\end{proof}
This yields the following corollary:
 \begin{Cor}{\label{inverseGamma}}
Let $d\in\Ne$ and $X\in (\bbD^{\infty})^d$, if $det (\Gamma [X])>0$ and if $\displaystyle\frac{1}{det (\Gamma [X])}$ belongs to $\bigcap_{p\geq 1} L^p (\bbP)$ then
\[\left( \Gamma [X]\right)^{-1}\in (\bbD^{\infty})^{d\times d} \makebox{ and } \left( \Gamma [X]\right)^{-1}\cdot{X^\sharp}\in \left(\bbD \left(L^2 (\hbbP )\right)\right)^d.\]
\end{Cor}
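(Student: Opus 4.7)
The plan is to build the matrix $(\Gamma[X])^{-1}$ entrywise from the three stability properties already established for $\bbD^\infty$: (i) it is an algebra for pointwise products (Lemma \ref{algebre}), (ii) it is closed under $\Gamma$ (Corollary \ref{Gamma}), and (iii) it is closed under taking reciprocals, provided positivity and negative moments hold (Lemma \ref{Inverse}). The cofactor formula then reduces the inversion to a rational combination of elements of $\bbD^\infty$.

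First I would check that every entry $\Gamma[X_i,X_j]$ of the $d\times d$ matrix $\Gamma[X]$ lies in $\bbD^\infty$. By the polarization identity $\Gamma[X_i,X_j]=\tfrac{1}{2}(\Gamma[X_i+X_j]-\Gamma[X_i]-\Gamma[X_j])$, each entry is a linear combination of terms of the form $\Gamma[Z]$ with $Z\in\bbD^\infty$, hence belongs to $\bbD^\infty$ by Corollary \ref{Gamma}. Because $\bbD^\infty$ is an algebra (Lemma \ref{algebre} with $E=\R$), the polynomial expression $\det(\Gamma[X])$ in these entries is again in $\bbD^\infty$. The hypotheses that $\det(\Gamma[X])>0$ and $1/\det(\Gamma[X])\in\bigcap_{p\geq 1}L^p(\bbP)$ are exactly what Lemma \ref{Inverse} requires, so $1/\det(\Gamma[X])\in\bbD^\infty$. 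Writing the inverse through Cramer's rule,
\[
\bigl(\Gamma[X]\bigr)^{-1}_{ij}=\frac{C_{ji}(\Gamma[X])}{\det(\Gamma[X])},
\]
where $C_{ji}$ is a polynomial in the entries of $\Gamma[X]$, another application of Lemma \ref{algebre} shows that every entry of $(\Gamma[X])^{-1}$ lies in $\bbD^\infty$, which is the first conclusion.

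For the second conclusion, observe that for $X_j\in\bbD^\infty$ the iterated definition of the Sobolev spaces on the upper space ensures $X_j^\sharp\in\bbD^\infty(L^2(\hbbP))$. Applying Lemma \ref{algebre} with $E=L^2(\hbbP)$ to the scalar factor $(\Gamma[X])^{-1}_{ij}\in\bbD^\infty$ and the $E$-valued factor $X_j^\sharp$ yields $(\Gamma[X])^{-1}_{ij}X_j^\sharp\in\bbD^\infty(L^2(\hbbP))\subset\bbD(L^2(\hbbP))$, and summing in $j$ gives $(\Gamma[X])^{-1}\cdot X^\sharp\in(\bbD(L^2(\hbbP)))^d$. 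The only step that is not purely algebraic is the invocation of Lemma \ref{Inverse}, and the integrability hypothesis on $1/\det(\Gamma[X])$ is precisely designed to meet it; everything else reduces to bookkeeping with the stability results already proven.
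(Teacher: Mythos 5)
Your proof is correct and follows essentially the same route as the paper: express $(\Gamma[X])^{-1}$ via the cofactor matrix, use Lemma \ref{algebre} and Corollary \ref{Gamma} to place $\det(\Gamma[X])$ in $\bbD^\infty$, invoke Lemma \ref{Inverse} for its reciprocal, and apply Lemma \ref{algebre} with $E=L^2(\hbbP)$ for the product with $X^\sharp$. Your version is simply more explicit (e.g.\ the polarization step for the off-diagonal entries $\Gamma[X_i,X_j]$) than the paper's terse argument.
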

\begin{proof} First of all, it is clear that $det(\Gamma [X])$  belongs to $\bbD^{\infty}$ thanks to Lemma
\ref{algebre}. Applying the previous Lemma, we conclude that $\displaystyle\frac{1}{det(\Gamma [X])}\in \bbD^{\infty}$. From this, as $(\Gamma[X])^{-1}$ is the product of $\displaystyle\frac{1}{det(\Gamma [X])}$ and the co-factors matrix, it is clear that
$(\Gamma[X])^{-1}$ belongs to $(\bbD^{\infty})^{d\times d}$. \\
The second property is a direct consequence of Lemma \ref{algebre}.
\end{proof}
In view of the next Lemma, we recall that  $A$ denotes the generator of the Dirichlet form $(\bbD ,\cE )$. \\
The operator $X\mapsto X^\sharp$, considered as an unbounded operator with domain $\bbD \subset L^2 (\bbP )$ and values in   $L^2 (\bbP\times \hbbP)$, admits an adjoint operator that we denote by $\delta$. Its is an operator with domain $\cD (\delta ) \subset L^2 (\bbP\times \hbbP)$ and values in $L^2 (\bbP )$.
\begin{Le}{\label{FormuleAdj}} Let $X\in \bbD^{\infty}$ and $Y\in \bbbD^{\infty}$  then $XY^\sharp$ belongs to $\cD (\delta )$ and
\[ \delta [XY^\sharp ]=-2XAY-\Gamma [X,Y].\]
\end{Le}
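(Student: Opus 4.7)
The plan is to verify the required duality identity by testing against functions in $\bbD^\infty$, which is dense in $\bbD$, and then extending by continuity. First I would fix an arbitrary $Z\in\bbD^\infty$ and compute $\E\hE[XY^\sharp\,Z^\sharp]$. Since $X$ does not depend on $\hat\omega$, Fubini together with the polarized form of \eqref{formuleocc}, namely $\hE[F^\sharp G^\sharp]=\Gamma[F,G]$, immediately yields
\[
\E\hE[XY^\sharp Z^\sharp]\;=\;\E\!\left[X\,\hE[Y^\sharp Z^\sharp]\right]\;=\;\E[X\,\Gamma[Y,Z]].
\]

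Next I would apply the Leibniz rule for the carr\'e du champ $\Gamma[Y,XZ]=X\,\Gamma[Y,Z]+Z\,\Gamma[X,Y]$, which is legitimate because Lemma \ref{algebre} ensures $XZ\in\bbD^\infty\subset\bbD$. This gives
\[
\E[X\,\Gamma[Y,Z]]\;=\;\E[\Gamma[Y,XZ]]-\E[Z\,\Gamma[X,Y]].
\]
The hypothesis $Y\in\bbbD^\infty$ in particular places $Y$ in $\cD(A)$, so the defining relation of the Dirichlet form $\cE(Y,U)=-\E[AY\cdot U]$ combined with $\cE(Y,U)=\tfrac12\E[\Gamma[Y,U]]$ gives $\E[\Gamma[Y,XZ]]=-2\E[XZ\,AY]$. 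Substituting, one arrives at
\[
\E\hE[XY^\sharp Z^\sharp]\;=\;\E\!\left[Z\,\bigl(-2XAY-\Gamma[X,Y]\bigr)\right].
\]

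To finish, it remains to check that $W:=-2XAY-\Gamma[X,Y]$ lies in $L^2(\bbP)$, so that the linear form $Z\mapsto\E[ZW]$ is continuous on $L^2(\bbP)$ and \emph{a fortiori} on $\bbD$. The hypothesis $Y\in\bbbD^\infty$ is precisely designed to give $AY\in\bbD^\infty\subset\bigcap_{p\geq1}L^p(\bbP)$, so H\"older (or Lemma \ref{algebre}) yields $XAY\in L^2$; and Corollary \ref{Gamma} together with polarization, $\Gamma[X,Y]=\tfrac12(\Gamma[X+Y]-\Gamma[X]-\Gamma[Y])$, gives $\Gamma[X,Y]\in\bbD^\infty\subset L^2$. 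By density of $\bbD^\infty$ in $\bbD$, the identity $\E\hE[XY^\sharp Z^\sharp]=\E[ZW]$ extends to every $Z\in\bbD$, which is exactly the statement that $XY^\sharp\in\cD(\delta)$ with $\delta[XY^\sharp]=W$.

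No real obstacle is anticipated: the argument is a single application of the Leibniz rule for $\Gamma$ followed by the standard integration-by-parts identity for $A$. The only point requiring attention is to match the regularity hypotheses — $X\in\bbD^\infty$ is just enough for the carr\'e du champ and product manipulations, while the stronger assumption $Y\in\bbbD^\infty$ is exactly what is needed to make $AY$ live in sufficiently many $L^p$ spaces for the final $L^2$-continuity argument to close.
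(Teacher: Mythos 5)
Your proof is correct and follows essentially the same route as the paper: test against $Z\in\bbD^\infty$, use the product/Leibniz rule (your Leibniz identity for $\Gamma$ is just the $\hE$-average of the paper's expansion of $(ZX)^\sharp$), apply the duality $\E[\Gamma[Y,U]]=-2\E[U\,AY]$, and conclude by density. Your explicit check that $-2XAY-\Gamma[X,Y]$ lies in $L^2(\bbP)$ is a detail the paper leaves implicit, but it does not change the argument.
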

\begin{proof} Let $Z\in\bbD^{\infty}$ then $XZ \in\bbD^{\infty}$ and by definition of $A$, we have:
\begin{eqnarray*}
\E\hE [ (ZX)^\sharp Y^\sharp ]&=&\E [\Gamma [ZX,Y]]\\
&=& \E [ZX (-2 AY)].
\end{eqnarray*}
But $(ZX)^\sharp =Z^\sharp X + ZX^{\sharp}$ so that
\begin{eqnarray*}
\E\hE [ Z^{\sharp} XY^{\sharp}]&= &\E [ZX (-2 AY)]-\E\hE [Z X^{\sharp}Y^\sharp ]\\
&=& \E [ZX (-2 AY)]-\E[Z \Gamma [X,Y]].
\end{eqnarray*}
We end the proof using the fact that $\bbD^{\infty}$ is dense in $L^2 (\bbP )$.
\end{proof}
We now turn out to the main result of this section which gives a
criterion of smoothness for an element in $\bbD^{\infty}$, we
shall apply it to several examples in the next sections.
\begin{Pro}\label{Densite} Let $d\in\Ne$ and $X$ be  in $\left(\bbbD^{\infty}\right)^d$. If
$\left( \Gamma [X]\right)^{-1} \in \bigcap_{p\geq 1} L^p
 (\bbP; \R^{d\times d} )$, 
 then $X$ admits a density which belongs to $C^{\infty}(\R^d )$.
 \end{Pro}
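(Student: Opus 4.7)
The plan is to run the classical Malliavin-calculus argument inside our Dirichlet-form framework. The target is to establish, for every multi-index $\alpha\in\N^d$, an integration-by-parts identity
\[\E[(\partial^\alpha\varphi)(X)]\;=\;\E[\varphi(X)\,H_\alpha],\qquad H_\alpha\in\bigcap_{p\geq 1}L^p(\bbP),\]
valid for all $\varphi\in C_b^\infty(\R^d)$. Specializing then to $\varphi_\xi(x)=e^{i\xi\cdot x}$ (admissible by a cut-off approximation since each $H_\alpha\in L^1(\bbP)$) gives $|\xi^\alpha||\hat{\mu}_X(\xi)|\leq\|H_\alpha\|_{L^1(\bbP)}$, so $\hat{\mu}_X$ decays faster than any polynomial and Fourier inversion produces a density in $\mathcal{S}(\R^d)\subset C^\infty(\R^d)$.

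The base case uses the chain rule for $\sharp$: $\Gamma[\varphi(X),X_j]=\sum_k(\partial_k\varphi)(X)\Gamma[X_k,X_j]$. By Corollary \ref{inverseGamma} the entries of $(\Gamma[X])^{-1}$ lie in $\bbD^\infty$, so matrix inversion yields $(\partial_l\varphi)(X)=\sum_j(\Gamma[X]^{-1})_{lj}\Gamma[\varphi(X),X_j]$. Rewriting $\Gamma[\varphi(X),X_j]=\hE[\varphi(X)^\sharp X_j^\sharp]$ and applying Lemma \ref{FormuleAdj} with first argument $Z(\Gamma[X]^{-1})_{lj}\in\bbD^\infty$ (Lemma \ref{algebre}) and second argument $X_j\in\bbbD^\infty$, I obtain, for any weight $Z\in\bbD^\infty$, the one-step formula $\E[Z(\partial_l\varphi)(X)]=\E[\varphi(X)\Phi_l(Z)]$ with
\[\Phi_l(Z):=-\sum_j\Bigl(2Z(\Gamma[X]^{-1})_{lj}\,AX_j+\Gamma\bigl[Z(\Gamma[X]^{-1})_{lj},X_j\bigr]\Bigr).\]

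The structural fact that drives the whole argument is that each $\Phi_l$ maps $\bbD^\infty$ into itself. Indeed $AX_j\in\bbD^\infty$ because $X_j\in\bbbD^\infty$; $\Gamma[\cdot,\cdot]$ preserves $\bbD^\infty$ by Corollary \ref{Gamma} together with polarization; and Lemma \ref{algebre} handles products. Crucially, the second slot of Lemma \ref{FormuleAdj} always remains the fixed component $X_j$, so the iteration never needs $A^2 X_j$ or deeper $\bbbD$-regularity. Iterating, $H_\alpha:=\Phi_{l_{|\alpha|}}\circ\cdots\circ\Phi_{l_1}(1)\in\bigcap_p L^p(\bbP)$ realizes the desired IBP formula for every multi-index $\alpha$, and the Fourier step above concludes.

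The main technical obstacle is precisely verifying this iterability, i.e.\ showing that after each integration by parts the new weight remains in $\bbD^\infty$ rather than merely in some $L^p(\bbP)$. This rests on the algebraic stability of $\bbD^\infty$ under products, $\Gamma$, and inversion of $\det\Gamma[X]$ (Lemma \ref{algebre}, Corollary \ref{Gamma}, Corollary \ref{inverseGamma}); the role of the hypothesis $X\in(\bbbD^\infty)^d$, strictly stronger than $(\bbD^\infty)^d$, is exactly to absorb the single $AX_j$ that appears in each application of Lemma \ref{FormuleAdj}. Once this closure is secured, the Fourier decay / inversion step is routine.
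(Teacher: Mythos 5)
Your argument is correct and follows essentially the same route as the paper: the identity $\nabla f(X)=(\Gamma[X])^{-1}\Gamma[f(X),X]$, integration by parts through the adjoint $\delta$ via Lemma \ref{FormuleAdj}, and the key observation that the iterated weights stay in $\bbD^{\infty}$ thanks to Lemmas \ref{algebre}, \ref{Gamma}, \ref{Inverse} and Corollary \ref{inverseGamma}, with the hypothesis $X\in(\bbbD^{\infty})^d$ serving exactly to absorb the single $A X_j$ produced at each step. The only cosmetic slip is the claim that the density lies in $\mathcal{S}(\R^d)$: rapid decay of $\widehat{\mu}_X$ only yields a bounded $C^{\infty}$ density (consider the Cauchy law), which is all the statement requires.
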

 \begin{proof} Let $f\in C^{\infty}_c (\R^d )$. With obvious
 notation, we consider the column vector:
 \[ \Gamma [f(X),X]=\left( \Gamma [f(X),X_i ]\right)_{1\leq i\leq
 d}
 .\]
 As a consequence of the functional calculus related to the local
 Dirichlet forms (see \cite{bouleau-hirsch2}, section I.6) we
 have for all $i\in \{ 1,\cdots ,d\}$:
 \begin{eqnarray*}
\Gamma [f(X),X_i]&=& \sum_{j=1}^d {\partial_j}f(X)\Gamma [X_j ,X_i
]
\end{eqnarray*}
so that 
$
\Gamma [f(X),X]= \Gamma [X]\nabla f(X)
$ 
  and $ \nabla f(X)=\left(\Gamma [X]\right)^{-1}\Gamma [f(X),X].$\\
   We now denote by $e_i$ the $i-$th column vector of the canonical
 basis in $\R^d$ and by $M^\ast$ the transposed  of any matrix
 $M$. We have:
 \begin{eqnarray*}
 \bbE[\partial_i f(X)]&=& \E[\nabla f (X)^\ast e_i ]
 =\bbE[\Gamma [f(X),X^\ast ]\left(\Gamma [X]\right)^{-1}e_i ]\\
 &=&\bbE\hE [ f(X)^\sharp X^{\ast,\sharp}\left( \Gamma [X]\right)^{-1}e_i
 ]= \bbE \left[f(X) \delta [X^{\ast,\sharp }\left(\Gamma [X]\right)^{-1}e_i ]
 \right]\end{eqnarray*}
 If $i_1 ,i_2 ,\cdots ,i_n ,\cdots $ is a given sequence  in $\{ 1,\cdots ,d\}$ we get
 by iteration:
 \begin{eqnarray*}
 \bbE [\partial_{i_n}\cdots \partial_{i_1} f(X)]&=&\\
 &&\hspace{-2cm}\bbE \left\{
 f(X)\delta\left[ X^{\ast,\sharp }\left(\Gamma
 [X]\right)^{-1}e_{i_n}\delta \left[ X^{\ast,\sharp }\left(\Gamma
 [X]\right)^{-1}e_{i_{n-1}}\delta\left[ \cdots \delta \left[ X^{\ast,\sharp }\left(\Gamma
 [X]\right)^{-1}e_{i_1}\right] \right.\cdots \right.
 \right]\right\}\end{eqnarray*}
 More precisely, we have for all $n\in\Ne$:
 \begin{eqnarray}{\label{eqZ}}\bbE \left[\partial_{i_n}\cdots \partial_{i_1} f(X)\right]&&=\bbE
 \left[ f(X)Z_n \right],\end{eqnarray}
 where $Z_n $ is defined inductively by :
 \[ \left\{ \begin{array}{rcl}
 Z_1&=& \delta [ X^{\ast,\sharp }\left(\Gamma
 [X]\right)^{-1}e_{i_1}]\\
 Z_n &=& \delta [ X^{\ast,\sharp }\left(\Gamma
 [X]\right)^{-1}e_{i_{n}}Z_{n-1}],\ \ n\in\Ne.\end{array}\right. \]
By Lemma \eqref{FormuleAdj}, we obtain that for all $n\in\Ne$:
\[ Z_n =-2A[X^\ast ]\left(\Gamma
 [X]\right)^{-1}e_{i_{n}}Z_{n-1}-\sum_{j=1}^d \Gamma [X^{\ast}_j, a_{j,i_n }Z_{n-1}],\]
 where $a_{j, i_n}$ denotes the $j-$th element of the $i_n$ column of the matrix $(\Gamma [X])^{-1}$.
As $A[X]\in (\bbD^{\infty} )^d$ and thanks to Lemmas \ref{algebre}, \ref{Gamma} and \ref{Inverse} we conclude that for all $n\in\Ne$,
$Z_n$ belongs to $\bbD^{\infty}$ hence in $L^1 (\bbP )$. \\
So, equality \eqref{eqZ} implies that for all $n\in\Ne$ and all $f\in C^{\infty}_c (\R^d )$:
\[ \E [|f^{(n)} (X)|]\leq \| f\|_{\infty} \E [|Z_n|].\]
This ends the proof by standard arguments.
\end{proof}

\section{Application to Poisson driven sde's}{\label{SDE}}
\subsection{The SDE we consider}
We assume that on the probability space $(\Om_2,\cA_2 ,\bbP_2)$, an
$\R^n$-valued semimartingale $Z=(Z^1 ,\cdots,Z^n)$ is defined,
$n\in\Ne$.
As in \cite{bouleau-denis2}, we adopt the following assumption  on the bracket of $Z$ and on the total variation of its finite variation part. It is satisfied if both are dominated by the Lebesgue measure uniformly:\\
 {\underline{Assumption on $Z$}}:

There exists a positive constant $C$ such that  for any square integrable $\R^n$-valued predictable process $h$:
\begin{equation}\label{CondCrochet} \forall t\geq 0 ,\ \E  [(\int_0^t h_s dZ_s )^2]\leq C^2 \E [\int_0^t |h_s|^2 ds].\end{equation}
Let $d\in\Ne$, we consider the following SDE :
\begin{equation}\label{eq}
X_t =x_0 +\int_0^t \int_\Xi c(s,X_{s^-},u)\tN (ds,du)+\int_0^t \sigma
(s,X_{s^-})dZ_s
\end{equation}
where $x_0\in\R^d$,  $c:\Omega_2\times\R^+
\times \R^d \times \Xi\rightarrow\R^d$ and $\sigma:\Omega_2 \times\R^+ \times \R^d \rightarrow\R^{d\times n}$ are random coefficients which are predictable and satisfy the set of hypotheses below denoted (R).\\
\noindent\underline{Assumption (R)}: For simplicity, we fix all
along this article a finite terminal time $T>0$.
\noindent 1.a) For $\bbP_2$-almost  all $w_2 \in\Omega_2$, all $t\in [0,T]$ and $u\in \Xi$, $c(t,\cdot ,u)$ is
infinitely differentiable  and
\[\forall \alpha\in\Ne, \  \sup_{t\in [0,T], x\in\R^d} | D^\alpha_x c(t,x
,\cdot)| \in \bigcap_{p\geq 1} L^p (\Omega_2 \times \Xi,\bbP_2\times\nu),\] \indent b) $
\sup_{t\in [0,T]}\ |c(t,0,\cdot)|\in \bigcap_{p\geq 1} L^p (\Omega_2 \times \Xi,\bbP_2\times\nu),$ 

\indent c)  for all $t\in [0,T]$, $\alpha\in\N$ and $x\in \R^d$,
$D^\alpha_x c(t,x,\cdot )\in\bbd^\infty$ and
\[  \forall n\in\Ne, \ \forall q\geq 2,\ \sup_{t\in [0,T], x\in\R^d}  \|
D^\alpha_x c(t,x,\cdot)\|_{\bbd^{n,q}}\in \bigcap_{p\geq 1} L^p (\Omega_2 ,\bbP_2),\] \indent d) for all $t\in
[0,T] $, all $x\in \R^d$ and $u\in \Xi$,\ the matrix $I+D_x
c(t,x,u)$ is invertible and
\[ \sup_{t\in [0,T], x\in \R^d} \left|\left(I+D_x c(t,x,u)\right)^{-1}{\times c(t,x,u)}\right|\in \bigcap_{p\geq 1} L^p (\Omega_2 \times \Xi,\bbP_2\times\nu).\] 2.  For all $t\in [0,T]$ , $\sigma(t,\cdot )$ is infinitely
 differentiable and
\[ \forall \alpha\in\Ne \sup_{t\in [0,T], x\in\R^d} | D_x^\alpha \sigma(t,x)|\in \bigcap_{p\geq 1} L^p (\Omega_2 ,\bbP_2 ).\]
3. As a consequence of  hypotheses 1. \!and 2. \!above, it is well
known that equation \eqref{eq} admits a unique solution $X$ such
that $ \E [\sup_{t\in [0,T]} |X_t |^2 ]<+\infty$.
 We suppose that for all $t\in [0,T]$, the matrix $(I+\sum_{j=1}^n D_x \sigma_{\cdot ,j} (t, X_{t^-})\Delta Z_t^j )$
  is invertible and its inverse is bounded by a deterministic constant uniformly with respect to
  $t\in [0,T]$.\\
We shall also consider the following set {of} hypotheses:\\
\noindent\underline{Assumption (\={R})}: assume all the hypotheses of (R) excepted  hypothesis 1.c) which is replaced by the stronger one:\\ 
\indent  1.\={c})  for all $t\in [0,T]$, $\alpha\in\N$ and $x\in \R^d$,
$D^\alpha_x c(t,x,\cdot )\in\bbbd^\infty$ and
\[  \forall n\in\Ne, \ \forall p\geq 1,\ \sup_{t\in [0,T], x\in\R^d}  \|
D^\alpha_x c(t,x,\cdot)\|_{\bbbd^{n,p}}<+\infty .\] 
\subsection{Spaces of processes}
We start by introducing the Dirichlet structure which is the  product   of $(\bbD, \cE)$ and $(\bbd ,e)$ that we denote by $(\bbD\ho\bbd ,\hat{\cE})$. Following \cite{bouleau-hirsch2} Section V.2, we know that 
\begin{eqnarray*}\bbD\ho\bbd&={\big\{ }f\in L^2 (\bbP\times \nu)\makebox{ s.t. } &\makebox{for } \nu  \makebox{ almost all } x\in\Xi,\ f(\cdot ,x )\in \bbD,\makebox{ for } \bbP  \makebox{ almost all } w\in\Omega,\ f(w ,\cdot )\in \bbd\\
&&\makebox{ and } \hat{\cE} (f)=\int_{\Xi} \cE (f(\cdot ,x ))\, d\nu (x)+\E [e (f(w ,\cdot ))]\, <+\infty\}.\end{eqnarray*}
It is a local Dirichlet structure which admits a carré du champ, $\hat{\Gamma }$,  given by 
$$\hat{\Gamma} [f](w ,x )=\Gamma [f(\cdot ,x )](w) +\gamma [f(w ,\cdot)](x),$$
and a Gradient operator, $\hat{D}$, with values in $L^2 (\hbbP )\times L_0 $ given by 
$$ \hat{D}f (w,x)=\left( f^\sharp (\cdot ,x)(w),f^\flat (w,\cdot )(x)\right).$$
Let $\hat{A}$ be the generator of this Dirichlet structure and $\cD (\hat{A})$ its domain. It is obvious  (see Section V, Proposition 2.1.3 in \cite{bouleau-hirsch2})\ that $\cD (\hat{A})$ contains 
$\cD_0 (\hat{A})$ where 
\begin{eqnarray*}\cD_0 (\hat{A})&={\big\{ }f\in L^2 (\bbP\times \nu)\makebox{ s.t.} &\makebox{ for }\nu  \makebox{ almost all } x\in\Xi,\ f(\cdot ,x )\in \cD (A),\\ &&\makebox{ for } \bbP  \makebox{ almost all } w\in\Omega,\ f(w ,\cdot )\in \cD (a), \\
&&\E\left[ \int_{\Xi} |A[f (\cdot ,x )](w)|^2+ |a[f(w ,\cdot )](x)|^2 \, d\nu (x)\right]\, <+\infty\},\end{eqnarray*}
and if $f\in \cD_0 (\hat{A})$, \begin{equation}\label{formuleA}\hat{A}f (w,x)=A[f(\cdot ,x)](w)+a[f(w,\cdot )](x).\end{equation}
We consider the algebraic tensor product $\bbD_0 \otimes \bbd_0$, it is dense in $\bbD\ho\bbd$. Moreover, each element in $\bbD_0 \otimes \bbd_0$ is infinitely differentiable w.r.t. $\hat{D}$, so that we can define as for $\bbD$ (or $\bbd$) the different Sobolev spaces $(\bbD\ho\bbd )^{n,p}$ and  $(\bbD\ho\bbd )^{\infty}=\bigcap_{n\in\Ne , p\geq 1}(\bbD\ho\bbd )^{n,p}$.\\
For all $n\in\Ne$ and $p\geq 1$, we denote by $(\widetilde{\bbD\ho\bbd })^{n,p}$ the completion of the algebraic tensor product $\bbbD^{n,p}\otimes\bbbd^{n,p}$ with respect to the norm
$$\| X\|_{(\widetilde{\bbD\ho\bbd })^{n,p}}=\int_{\Xi} \|X(\cdot ,x)\|_{\bbbD^{n,p}}\, \nu (dx)+\E [ \| X(w,\cdot )\|_{\bbbd^{n,p}}].$$
It is clear that $(\widetilde{\bbD\ho\bbd })^{n,p} \subset \cD_0 (\hat{A})$ and as usual we set 
$$(\widetilde{\bbD\ho\bbd })^{\infty}=\bigcap_{n\in\Ne , p\geq 1}(\widetilde{\bbD\ho\bbd })^{n,p}.$$
We denote by $\cP$ the predictable sigma-field on
$[0,T]\times \Omega$ and we define the following sets of processes:
\begin{itemize}
\item $\LDP{n,p}$ : the space of predictable processes which belong to $L^2 ([0,T]; \bbD^{n,p})$.
\item $\LDdP{n,p}$ : the set of real valued processes $H$ defined on $[0,T]\times
\Omega \times \Xi$ which are predictable and belong to $L^2
([0,T];(\bbD\ho\bbd )^{n,p}).$
\item  $\bLDP{n,p}$ : the set of predictable real valued  processes which belong to $L^2 ([0,T]; \bbbD^{n,p})$.
\item $\bLDdP{n,p}$ : the set of real valued processes $H$ defined on $[0,T]\times
\Omega \times \Xi$ which are predictable and belong to $L^2
([0,T];(\widetilde{\bbD\ho \bbd})^{n,p})$.

\end{itemize}
In a natural way, we set
\[ \LDP{\infty} =\bigcap_{n\in\Ne , p\geq 1} \LDP{n,p},\ \bLDP{\infty} =\bigcap_{n\in\Ne , p\geq 1} \bLDP{n,p}\]
and
\[ \LDdP{\infty} =\bigcap_{n\in\Ne , p\geq 1} \LDdP{n,p},\ \bLDdP{\infty} =\bigcap_{n\in\Ne , p\geq 1}\bLDdP{n,p}.\]
These spaces are endowed with their natural inductive limit topology.

We define $\LDP{\infty}^0$ (resp. $\bLDP{\infty}^0 )$to be the set of elementary processes
in $\LDP{\infty}$ (resp. $\bLDP{\infty}$) of the form
\[ G_t
(w)=\sum_{i=0}^{m-1} F_i (w)\1_{]t_i ,t_{i+1}]}(t),\] where
$m\in\Ne$, $0\leq t_0 \leq\cdots t_m\leq T$ and for all $i$, $F_i \in\bbD^\infty$ (resp. $\bbbD^\infty$) and is $\cA_{t_i}$-measurable.\\
The following Lemma is obvious:
\begin{Le} $\LDP{\infty}^0$ (resp. $\bLDP{\infty}^0 )$ is dense in  $\LDP{\infty}$ (resp. $\bLDP{\infty}$).
\end{Le}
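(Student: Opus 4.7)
The plan is a two-step vector-valued approximation: first mollify $H$ in time to obtain a predictable process, continuous in $t$ with values in $\bbD^{\infty}$, then discretize the time variable by a step function. Throughout I treat the $\LDP{\infty}$ case; the $\bLDP{\infty}$ case is identical after replacing $\bbD^{n,p}$ by $\bbbD^{n,p}$ and observing that the generator $A$, being closed, commutes with Bochner time-integration.

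Given $H\in\LDP{\infty}$, extend $H$ by zero outside $[0,T]$ and set
\[ H^k_t = k\int_{(t-1/k)^+}^{t} H_s\,ds,\qquad t\in[0,T],\ k\in\Ne. \]
For each $n\in\Ne$ and $p\geq 1$, $\bbD^{n,p}$ is a Banach space and $H\in L^2([0,T];\bbD^{n,p})$, so the Bochner integral makes $t\mapsto H^k_t$ a $\bbD^{n,p}$-valued function which is continuous in that norm; in particular $H^k_t\in\bbD^{\infty}$ for every $t\in[0,T]$. By the Lebesgue differentiation theorem for Bochner-integrable maps, $H^k\to H$ in $L^2([0,T];\bbD^{n,p})$ as $k\to\infty$, for every $n,p$. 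Because $H$ is predictable and $H^k$ is continuous and adapted, $H^k$ is itself predictable and $H^k_{t_0}$ is $\cA_{t_0}$-measurable for every $t_0\in[0,T]$.

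For fixed $k$, take partitions $0=t_0^m<\cdots<t_{N_m}^m=T$ with mesh tending to $0$ and set
\[ G^{k,m}_t = \sum_{i=0}^{N_m-1} H^k_{t_i^m}\,\1_{]t_i^m,t_{i+1}^m]}(t). \]
Each $H^k_{t_i^m}$ is $\cA_{t_i^m}$-measurable and lies in $\bbD^{\infty}$, so $G^{k,m}\in\LDP{\infty}^0$. By the $\bbD^{n,p}$-continuity of $s\mapsto H^k_s$ together with dominated convergence, $G^{k,m}\to H^k$ in $L^2([0,T];\bbD^{n,p})$ as $m\to\infty$, for every $n,p$. Since $\LDP{\infty}$ is equipped with the topology of the countable family of seminorms $\|\cdot\|_{L^2([0,T];\bbD^{n,p})}$, a standard diagonal extraction produces a sequence in $\LDP{\infty}^0$ converging to $H$ in $\LDP{\infty}$. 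The only point worth checking is that the closed operators $X\mapsto X^{(k\sharp)}$ (and $A$ in the barred case) commute with Bochner integration in $t$, which is immediate from their closedness together with the fact that the integrand already takes values in the relevant Banach space $\bbD^{n,p}$ (resp.\ $\bbbD^{n,p}$).
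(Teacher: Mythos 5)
Your argument is correct: the backward-in-time mollification preserves adaptedness and produces a process continuous in each $\bbD^{n,p}$ (resp.\ $\bbbd$-analogue) norm, the Riemann-type discretization then lands in $\LDP{\infty}^0$ (resp.\ $\bLDP{\infty}^0$), and a diagonal extraction over the countable family of seminorms finishes the job. The paper offers no proof at all --- it simply declares the lemma obvious --- and what you have written is precisely the standard argument the authors are implicitly invoking, so there is nothing to contrast.
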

\noindent{\bf Remark:} Let $H\in (\bbD\ho \bbd)^\infty$, then it is infinitely differentiable both w.r.t. to $w\in\Omega$ and $u\in\Xi$. One can easily 
verify (by approximation) that the order of {the derivations} plays no role so that for all $n,k\in\Ne$, the variable $X^{(n\sharp ),(k\flat ) }$ is defined without ambiguity 
as an element in $\bigcap_{p\geq 1} L^p (\bbP\times\hbbP^n \times \nu\times \rho^k )$.
\subsection{Functional calculus related to stochastic integrals}
\begin{Pro}\label{FormuleIS1} Let $H\in \LDdP{\infty}$ then for all $t\in [0,T]$ 
\[ X_t =\int_0^t \int_\Xi H(s,u)\tN (ds ,du)\]
belongs to $\bbD^{\infty}$. \\
And we have:
\begin{eqnarray*}
X_t^\sharp (w,w_1 )&=& \int_0^t\int_\Xi H^\sharp (s,u )(w,w_1 )\tN (ds,du)(w)\\
&&+\int_0^t \int_{\Xi\times R} H^\flat (s,u,r_1 )(w) N\odot\rho (ds,du, dr_1 )(w,w_1 ),
\end{eqnarray*}
\begin{eqnarray*}
X_t^{(2\sharp )} (w,w_1 ,w_2 )&=& \int_0^t\int_\Xi H^{2\sharp } (s,u )(w,w_1, w_2 )\tN (ds,du)(w)\\
&& +\int_0^t \int_{\Xi\times R} H^{\sharp,\flat} (s,u,r_1 )(w, w_1) N\odot\rho (ds,du, dr_1 )(w,w_2 )\\
&& +\int_0^t \int_{\Xi\times R} H^{\sharp,\flat}(s,u,r_1 )(w,w_2) N\odot\rho (ds,du, dr_1 )(w,w_1 )\\
&& +\int_0^t \int_{\Xi\times R^2} H^{(2\flat )} (s,u,r_1, r_2 )(w) N\odot\rho^{\odot 2} (ds,du, dr_1,dr_2 )(w,w_1 ,w_2 ).
\end{eqnarray*}
More generally, for all $n\in \Ne$,
\[X_t^{(n\sharp )}=\sum_{i=1}^{2^n} I_i ,\]
where $I_1=\int_0^t\int_\Xi H^{(n\sharp ) } (s,u )\tN (ds,du)$  and for $i\in \{ 2,\cdots ,2^n\}$, $I_i$ is a term of  the form
\begin{equation*}\begin{split}& I_i (w,w_1 ,\cdots ,w_n ) =\\
&\int_0^t \int_{\Xi\times R^j} H^{(j\sharp),((n-j)\flat )}_{(s,u,r_1 ,\cdots ,r_{n-j})} (w,w_{\sigma (1)}, \cdots ,w_{\sigma (j)})N\odot\rho^{\odot (n-j)} (ds ,du, dr_1 ,\cdots ,dr_{n-j})(w,w_{\sigma (j+1)},\cdots ,w_{\sigma (n )}),\end{split}\end{equation*}
where $j\in \{ 0,\cdots , n-1\}$ and $\sigma$ is a permutation on $\{ 1 ,\cdots ,n \}$.
\end{Pro}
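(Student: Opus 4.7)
I would first establish the $n=1$ formula on a dense class of elementary integrands of the form $H(s,u)(w)=F(w)\,h(u)\,\mathbf{1}_{(a,b]}(s)$, with $F\in\bbD^{\infty}$ being $\cA_a$-measurable and $h\in\bbd^{\infty}$. For such $H$, one simply has
\[
X_t=F\cdot\tN\!\bigl(h\,\mathbf{1}_{(a,t\wedge b]}\bigr),
\]
which lies in $\bbD^{\infty}$ by Lemma~\ref{algebre} together with Lemma~\ref{DeriveSimple}. Applying the product rule for $\sharp$, using that $F^{\sharp}(w,\hat{w}_1)$ only sees jumps up to time $a$ (so that it may legitimately be grouped with the predictable kernel $h\,\mathbf{1}$), and invoking Lemma~\ref{DeriveSimple} on $\tN(h\,\mathbf{1})$, one recovers exactly
\[
X_t^{\sharp}=\int_0^t\!\!\int_{\Xi}H^{\sharp}(s,u)\,\tN(ds,du)+\int_0^t\!\!\int_{\Xi\times R}H^{\flat}(s,u,r)\,N\odot\rho(ds,du,dr).
\]

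I would then iterate. Each further application of $\sharp$ to an integral of the form $\int H\,dN\odot\rho^{\odot k}$ splits it into two contributions: one obtained by differentiating the integrand, which produces a fresh $\sharp$-variable $\hat{w}_i$ inside $H$, and one obtained by lifting the integrator via $N\odot\rho^{\odot k}\mapsto N\odot\rho^{\odot(k+1)}$, which produces a fresh $\flat$-variable $r_i$. This dichotomy is Lemma~\ref{DeriveSimple} applied, at each level, to the product Dirichlet structure on $\Xi\times R^k$ (only the $N$ factor is affected; the $\rho$-marks are passive). Running the dichotomy $n$ times generates the announced sum of $2^n$ terms, the permutation $\sigma$ simply recording how the successive auxiliary variables $\hat{w}_1,\dots,\hat{w}_n$ are allocated between the $j$ integrand-differentiations (arguments of $H^{(j\sharp),((n-j)\flat)}$) and the $n-j$ integrator-liftings (arguments of $N\odot\rho^{\odot(n-j)}$).

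The extension from elementary integrands to general $H\in\LDdP{\infty}$ will then be carried out by density. Each of the $2^n$ terms is a stochastic integral whose $L^p(\bbP\times\hbbP^{\otimes n})$-norm is controlled, through the $L^2$-isometry for $\tN$ and the fact that the compensator of $N\odot\rho^{\odot k}$ is $dt\,d\nu\,d\rho^{\otimes k}$, together with the Khintchine-type bounds of Proposition~\ref{equiv-p}, by an expression of the form $\bigl(\int_0^T\|H_s\|_{(\widetilde{\bbD\ho\bbd})^{n,p}}^{\,?}\,ds\bigr)^{1/?}$. Since elementary processes are dense in $\LDdP{\infty}$ (the preceding lemma) and $\bbD^{\infty}$ is closed for simultaneous convergence in every $\|\cdot\|_{n,p}$, passing to the limit yields $X_t\in\bbD^{\infty}$ together with the stated identities.

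The main obstacle I expect is the combinatorial and notational bookkeeping: one has to make rigorous that $\sharp$ commutes with stochastic integration against $N\odot\rho^{\odot k}$ up to the additional $\flat$-creation term, and then to track the $2^n$ iterated contributions with their permutations $\sigma$ in a way that is stable under the limiting procedure. The saving point is that the norm on $\LDdP{\infty}$ was designed precisely to dominate the mixed derivatives $H^{(j\sharp),((n-j)\flat)}$ arising in the formula, so that the density argument closes cleanly once the algebraic identity is established on the elementary class.
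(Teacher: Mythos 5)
Your proposal is correct and follows essentially the same route as the paper: reduce to elementary predictable integrands $F\,h\,\mathbf{1}_{(a,b]}$ with $F\in\bbD^{\infty}$ adapted and $h\in\bbd^{\infty}$, apply the product rule together with Lemma~\ref{DeriveSimple} (via the product bottom structure on $\Xi\times R^k$) to generate the $2^n$-term dichotomy, and close by density using that the $\LDdP{n,p}$-norms dominate the mixed derivatives. The paper's own proof is just a terser version of exactly this argument.
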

\begin{proof} The case $n=1$  has been established in \cite{bouleau-denis2},  we proceed in a similar way.  
Assume first that 
 \[ H_t
(w,u)=\sum_{i=0}^{m-1} F_i (w)\1_{]t_i ,t_{i+1}]}(t)g_i (u),\] where for all $i\in \{
0,\cdots ,m-1\}$, $F_i \in \bbD^{\infty}$  and is $\cA_{t_i}$-measurable and
$g_i \in \bbd^\infty$.\\
The result is a direct consequence of the functional calculus and  Lemma \ref{DeriveSimple}. We conclude taking first a linear combination and then by density.\end{proof}
\begin{Le}\label{FormuleA}
Let $0\leq s<t\leq T$, $F\in\cD (A)$ and $g\in \cD (a)$.  If $F$ is $\cF_s$-measurable then  
\[ X=F\tN (\1_{]s,t]} g)\]
belongs to $\cD (A) $ and 
\[ A[X]=A[F]\tN (\1_{]s,t]} g)+ F\tN (\1_{]s,t]} a[g]).\]
\end{Le}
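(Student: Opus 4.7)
The plan is to reduce the assertion to two auxiliary facts: (i) $G := \tN(\1_{]s,t]}g)$ belongs to $\cD(A)$ with $A[G] = \tN(\1_{]s,t]}a[g])$, and (ii) $\Gamma[F,G] = 0$. Combining these with the derivation property of $\Gamma$ and the defining identity $\cE(U,V) = -\E[A[U]V]$ will then yield the formula.

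For (i), I would invoke the explicit generator formula \eqref{214}. For $g\in H$ the function $\Phi_\lambda := e^{i\lambda\tN(\1_{]s,t]}g)}$ lies in $\cD_0$, and
\[
A\Phi_\lambda = \Phi_\lambda\bigl(i\lambda\,\tN(\1_{]s,t]}a[g]) - \tfrac{\lambda^2}{2}\,N(\gamma[\1_{]s,t]}g])\bigr).
\]
Dividing by $i\lambda$ and letting $\lambda\to 0$, dominated convergence yields the $L^2(\bbP)$-limits $(\Phi_\lambda - 1)/(i\lambda) \to G$ and $A\Phi_\lambda/(i\lambda) \to \tN(\1_{]s,t]}a[g])$; closedness of $A$ then gives (i) for $g\in H$. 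A density argument using the closedness of $a$, the isometry $\|\tN(\1_{]s,t]}h)\|_{L^2(\bbP)}^2 = (t-s)\|h\|_{L^2(\nu)}^2$, and the closedness of $A$ extends this to all $g\in\cD(a)$.

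For (ii), the $\cF_s$-measurability of $F$ gives $\crea_{(\tau,u)}F = F$ for every $\tau > s$, so $(\crea F)^\flat(w,\tau,u,r) = 0$ on $]s,\infty[$ (using $1^\flat = 0$). By \eqref{formulegradient}, $F^\sharp$ is therefore supported on the jumps of $N$ lying in $[0,s]$. On the other hand, Proposition \ref{FormuleDerive} gives $G^\sharp = \int_s^t\!\int_{\Xi\times R} g^\flat(u,r)\,N\odot\rho(d\tau,du,dr)$, supported on the jumps of $N$ lying in $]s,t]$. The marks $R_j$ attached to these two disjoint collections of jumps are $\hbbP$-independent, and each integrand belongs to $L^2_0(\rho)$ in its $r$-variable, so conditioning on $w$ one finds that $F^\sharp(w,\cdot)$ and $G^\sharp(w,\cdot)$ are $\hbbP$-independent with zero $\hbbP$-mean. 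Therefore $\Gamma[F,G] = \hE[F^\sharp G^\sharp] = 0$.

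Finally, for $V$ in a dense family of bounded test functions in $\bbD$ (for instance $V\in\bbD_0$), the derivation property $\Gamma[FG,V] = F\Gamma[G,V] + G\Gamma[F,V]$ and the convention $\cE = \tfrac{1}{2}\E[\Gamma[\cdot,\cdot]]$ give
\[
\cE(FG,V) = \cE(G,FV) + \cE(F,GV) - \E[V\Gamma[F,G]] = -\E\bigl[(A[F]G + FA[G])V\bigr],
\]
using (i), (ii), and the generator identity applied to $F,G\in\cD(A)$. Density of the test family in $\bbD$ then identifies $FG$ as an element of $\cD(A)$ with the desired expression for $A[FG]$. The main obstacle I anticipate is ensuring $FV,GV\in\bbD$ in order to apply the derivation identity when $F$ and $G$ are only assumed to sit in $\cD(A)$; this is managed by restricting the test family to functions of $\bbD_0$ with bounded values and sufficiently controlled gradients, a routine but slightly delicate technicality in the local Dirichlet form framework.
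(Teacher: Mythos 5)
Your two computational ingredients are correct, and your argument for $\Gamma[F,G]=0$ is a genuinely different (and rather illuminating) one: the paper proves this vanishing only for $F=e^{i\tilde N(f)}$ with $f$ supported in $[0,s]\times\Xi$, where it reduces to $\gamma[f,\1_{]s,t]}g]=0$ because $\gamma$ acts only on the $\Xi$-variable and the time supports are disjoint; you instead argue directly for general $\cF_s$-measurable $F$ via the lent particle formula, observing that $F^\sharp$ and $G^\sharp$ charge disjoint sets of atoms of $N$ and hence involve independent, centred marks. Your derivation of $G\in\cD(A)$ with $A[G]=\tilde N(\1_{]s,t]}a[g])$ is also fine (the paper simply cites the explicit form of $A$ from the earlier reference).

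The gap is in the final assembly. You write $\cE(FG,V)$ and $\Gamma[FG,V]$ for a general $F\in\cD(A)$, which presupposes $FG\in\bbD$ --- precisely part of what the lemma must establish --- and you need $FV,\,GV\in\bbD$ to invoke $\cE(G,FV)=-\E[A[G]FV]$ and $\cE(F,GV)=-\E[A[F]GV]$. You propose to handle the latter by choosing test functions $V$ ``with bounded values and sufficiently controlled gradients,'' but on the Poisson space with $\nu(\Xi)=\infty$ this device is not available: even for $V\in\bbD_0$ one has $\Gamma[V]=\sum_{i,j}\partial_i\varphi\,\partial_j\varphi\,N(\gamma[f_i,f_j])$, a Poisson integral which is unbounded, so $\E[F^2\Gamma[V]]<\infty$ is not guaranteed for a general $F\in\cD(A)$, and $FV\in\bbD$ fails to be routine. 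The clean repair is the paper's order of operations: first prove the identity for $F=e^{i\tilde N(f)}$ with $f\in (H\otimes L^2(dt))$ supported in $[0,s]\times\Xi$, where $F$ is bounded, lies in the algebra $\cD_0\subset\cD(A)$, and the product formula $A[FG]=FA[G]+GA[F]+\Gamma[F,G]$ is part of the functional calculus; then extend to all $\cF_s$-measurable $F\in\cD(A)$ by closedness of $A$, using the independence of $F$ (and $A[F]$) from $G=\tilde N(\1_{]s,t]}g)$ and $\tilde N(\1_{]s,t]}a[g])$ to get $L^2$-convergence of the products $F_nG$ and $A[F_n]G+F_n\tilde N(\1_{]s,t]}a[g])$ along an approximating sequence. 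With that density step inserted, your two ingredients slot in and the proof closes.
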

\begin{proof}Let us  prove that $X$ belongs to the domain of $A$ and calculate $A[X]$. To this end, assume first that 
\[F=e^{i\tN (f)},\ f\geq 0,\ f\in L^2 ([0,T],dt)\otimes H,\,  \makebox{ and } f(u,\cdot )=0\ \forall u>s\ \quad (**) \]
(the space $H$ is the subspace of $\mathcal{D}(a)$ introduced in hypothesis (H) \S \ref{notations}). 

By the functional calculus (see \cite{bouleau-hirsch2}, Section 1.6), we know that $X$ belongs to $\cD (A)$ and that 
\[ A[X]=A[F]\tN (\1_{]s,t]} g)+ FA[\tN (\1_{]s,t]} g)]+\frac12 \Gamma [F,\tN (\1_{]s,t]} g)],\]
but as a consequence of the explicit expression of $A$ given in \cite{bouleau-denis}, Section 3.2.1, we know that 
\[ A[\tN (\1_{]s,t]} g)]=\tN (\1_{]s,t]} a[g]),\]
and moreover:
\begin{eqnarray*}
\Gamma [F,\tN (\1_{]s,t]} g)]&=&iF\Gamma [\tN (f), \tN (\1_{]s,t]} g)]\\
&=&iFN(\gamma [f, \1_{]s,t]}g,])\\
&=&0,
\end{eqnarray*}
since $\gamma$ acts only on the variable in $\Xi$.\\
Finally, as the space of random variables of the form $(**)$ is total in the subvector space of random variable in $\cD (A)$ and $\cF_s$-measurable  (see Section 3.2.1 in \cite{bouleau-denis}), we conclude by density. 
\end{proof}
\begin{Le}\label{LemmeIS} Let $H\in L^2 ([0,T]; \cD (\hat{A}))$ be predictable, then for all $t\in [0,T]$ 
\[ X_t =\int_0^t \int_\Xi H(s,u)\tN (ds ,du)\]
belongs to $\cD (A)$ and 
\[ A[X_t ] =\int_0^t \int_\Xi \hat{A} [H(s,u)] \, \tN (ds ,du).\]
\end{Le}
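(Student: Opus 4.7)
The strategy is the standard one: establish the formula first for an elementary class of integrands where Lemma~\ref{FormuleA} applies term by term, then pass to the limit using the closedness of the generator $A$ together with the Itô isometry for $\tilde{N}$.

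First I would consider an elementary predictable process of the form
\[ H_s(w,u)=\sum_{i=0}^{m-1} F_i(w)\,g_i(u)\,\1_{]t_i,t_{i+1}]}(s),\]
with $F_i\in\cD(A)$ being $\cF_{t_i}$-measurable and $g_i\in\cD(a)$. For each summand $F_i\tilde{N}(\1_{]t_i,t_{i+1}]}g_i)$, Lemma~\ref{FormuleA} gives membership in $\cD(A)$ together with
\[ A\bigl[F_i\tilde{N}(\1_{]t_i,t_{i+1}]}g_i)\bigr]=A[F_i]\,\tilde{N}(\1_{]t_i,t_{i+1}]}g_i)+F_i\,\tilde{N}(\1_{]t_i,t_{i+1}]}a[g_i]).\]
By linearity and formula \eqref{formuleA}, which says $\hat{A}[F_i g_i\1_{]t_i,t_{i+1}]}]=A[F_i]g_i\1_{]t_i,t_{i+1}]}+F_i a[g_i]\1_{]t_i,t_{i+1}]}$, the sum of these contributions is exactly $\int_0^t\int_\Xi \hat{A}[H(s,u)]\,\tilde{N}(ds,du)$, so the lemma holds on the elementary class.

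Next I would pass to general $H\in L^2([0,T];\cD(\hat{A}))$ by density and closedness. The key is the Itô isometry $\|X_t\|_{L^2(\bbP)}^2=\E\int_0^t\int_\Xi H(s,u)^2\,\nu(du)ds$, applied simultaneously to $H$ and to $\hat{A}[H]$: if $H_n$ is a sequence of elementary processes as above converging to $H$ in $L^2([0,T];\cD(\hat{A}))$ (i.e.\ with $H_n\to H$ and $\hat{A}[H_n]\to\hat{A}[H]$ in $L^2(dt\times\bbP\times\nu)$), then $X_t^{(n)}\to X_t$ in $L^2(\bbP)$ and $\int_0^t\int_\Xi \hat{A}[H_n]\,d\tilde{N}$ converges in $L^2(\bbP)$ to $\int_0^t\int_\Xi \hat{A}[H]\,d\tilde{N}$. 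Since $A[X_t^{(n)}]$ equals the former by the elementary case, closedness of $A$ forces $X_t\in\cD(A)$ and yields the claimed identity.

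The main obstacle is justifying the density of the elementary class in $L^2([0,T];\cD(\hat{A}))$ among predictable integrands, in the graph norm of $\hat{A}$. This decomposes into two parts: time-wise approximation of predictable processes by step processes (standard), and approximation of a generic $f\in\cD(\hat{A})$ by finite sums $\sum F_i\otimes g_i$ with $F_i\in\cD(A)$, $g_i\in\cD(a)$ in the graph norm $\|\cdot\|_{L^2}+\|\hat{A}\cdot\|_{L^2}$. The latter is precisely the content of the density of $\cD_0(\hat{A})$ (which contains $\cD(A)\otimes\cD(a)$) in $\cD(\hat{A})$ for the graph norm, a fact built into the product Dirichlet structure construction of \cite{bouleau-hirsch2}, Section V.2 recalled just before the lemma; the $\cF_{t_i}$-measurability of $F_i$ is preserved because the approximation of $F_i$ can be done within $\cD(A)\cap L^2(\cF_{t_i})$ (for instance by applying $(I-A/n)^{-1}$ which commutes with conditional expectation since the semigroup is adapted).
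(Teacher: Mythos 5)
Your proposal is correct and follows essentially the same route as the paper: reduce to simple predictable processes of the form $\sum_i F_i\,\1_{]t_i,t_{i+1}]}\,g_i$, apply Lemma~\ref{FormuleA} term by term, and then invoke the density of $\cD(A)\otimes\cD(a)$ in $\cD(\hat A)$ for the graph norm (Proposition 2.1.3 of Section V in \cite{bouleau-hirsch2}) together with closedness of $A$ to pass to general $H\in L^2([0,T];\cD(\hat A))$. The only difference is that you spell out the It\^o isometry and the measurability-preservation step that the paper leaves implicit in ``conclude by density''.
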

\begin{proof} Assume first that $H$ is a ``simple proces'' :
\[ H_t (w,u)=\sum_{i=0}^{m-1} F_i (w)\1_{]t_i ,t_{i+1}]}(t)g_i (u),\] 
where for all $i\in \{ 0,\cdots ,m-1\}$ $F_i\in \cD (A)$ is $\cF_{t_i}$ measurable and $g_i \in \cD (a)$.
Then, for all $t\in [0,T]$
\[ X_t =\sum_{i=0}^{m-1} F_i \tN (\1_{]t_i ,t_{i+1}]}g_i ).\] 
Then, as a consequence of Lemma \ref{FormuleA}:
\begin{eqnarray*}A[ X_t ] &=&\sum_{i=0}^{m-1} \left( A[ F_i ] \tN(\1_{]t_i ,t_{i+1}]}g_i )  + F_i (w)\tN \left(\1_{]t_i ,t_{i+1}]}a[g_i ] \right)\right) \\
&=& \tN\left( \sum_{i=0}^{m-1} \left( A[ F_i ] \1_{]t_i ,t_{i+1}]}g_i   + F_i (w)\1_{]t_i ,t_{i+1}]}a[g_i ] \right)\right).\\
&=&\tN \left( \hat{A} [H]\right)
\end{eqnarray*}
Following \cite{bouleau-hirsch2} (Section V, Proposition 2.1.3), we know that the algebraic tensor product $\cD (A)\otimes\cD (a)$ is dense in $\cD (\hat{A})$ for the graph norm. From this, 
 we deduce that the set of  combinations of simple processes is dense in   $L^2 ([0,T]; \cD (\hat{A}))$ and conclude by density.
\end{proof}
Combining Proposition \ref{FormuleIS1}, Lemma \ref{LemmeIS} and relation \eqref{formuleA} we obtain
\begin{Pro}\label{FormuleIS2} Let $H\in \bLDdP{\infty} $ then for all $t\in [0,T]$ 
\[ X_t =\int_0^t \int_\Xi H(s,u)\tN (ds ,du)\]
belongs to $\bbbD_{\infty}$ and 
\[ A[X_t ] =\int_0^t \int_\Xi \left( A[H(s,u)]+a[H(s,\cdot )](u)\right) \, \tN (ds ,du).\]
\end{Pro}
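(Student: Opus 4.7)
The plan is to read off Proposition \ref{FormuleIS2} by combining the three previously established results the authors mention: Proposition \ref{FormuleIS1} (which handles the upper Sobolev regularity $\bbD^{\infty}$ of the stochastic integral), Lemma \ref{LemmeIS} (which handles the action of the generator $A$) and formula \eqref{formuleA} (which decomposes $\hat{A}$ into its ``upper'' and ``bottom'' pieces). Concretely, I want to verify three things: that $X_t \in \bbD^{\infty}$, that $X_t \in \cD(A)$ with the claimed explicit expression for $A[X_t]$, and finally that $A[X_t]$ itself lies in $\bbD^{\infty}$; these three facts together yield $X_t \in \bbbD^{\infty}$ by the very definition of that space.

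First I would note the inclusion $\bLDdP{\infty} \subset \LDdP{\infty}$ (immediate from $\bbbD^{n,p} \otimes \bbbd^{n,p} \subset \bbD^{n,p} \otimes \bbd^{n,p}$ with a continuous injection of norms), and apply Proposition \ref{FormuleIS1} to $H$ to obtain $X_t \in \bbD^{\infty}$. Next I would check that $\bLDdP{\infty} \subset L^2([0,T];\cD(\hat{A}))$: on an elementary tensor $F \otimes g$ with $F\in\bbbD^\infty$ and $g \in \bbbd^\infty$ this is clear since $F \in \cD(A)$ and $g\in\cD(a)$ so that $F\otimes g \in \cD_0(\hat A) \subset \cD(\hat A)$; and the graph norm of $\hat{A}$ is dominated by the $(\widetilde{\bbD\ho\bbd})^{n,p}$-norm thanks to \eqref{formuleA}, so the inclusion passes to the completion. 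With this at hand, Lemma \ref{LemmeIS} gives $X_t \in \cD(A)$ and
\[
A[X_t] \;=\; \int_0^t\!\!\int_{\Xi} \hat{A}[H(s,u)]\,\tN(ds,du),
\]
and formula \eqref{formuleA} rewrites the integrand as $A[H(\cdot,s,u)](w) + a[H(w,s,\cdot)](u)$, which is precisely the right-hand side announced in the Proposition.

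It remains to upgrade $A[X_t]$ from ``well-defined element of $L^2(\bbP)$'' to an element of $\bbD^{\infty}$. For this I introduce the auxiliary process $\tilde H := \hat{A}[H]$ and apply Proposition \ref{FormuleIS1} a second time, after checking that $\tilde H \in \LDdP{\infty}$. The latter reduces to the fact that, for every $n\in\Ne$ and $p\geq 1$, the operator $\hat{A}$ is continuous from $(\widetilde{\bbD\ho\bbd})^{n,p}$ into $(\bbD\ho\bbd)^{n,p}$; this in turn follows from \eqref{formuleA} together with the definition of $\bbbD^{n,p}$ and $\bbbd^{n,p}$, which builds the graph norms of $A$ and $a$ into the very norm of the tilde-space. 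Proposition \ref{FormuleIS1} applied to $\tilde H$ then yields $A[X_t]\in\bbD^{\infty}$, and we conclude $X_t \in \bbbD^{\infty}$.

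The one non-mechanical step is the passage to the completion in the verification that $\hat{A}$ is continuous from $(\widetilde{\bbD\ho\bbd})^{n,p}$ into $(\bbD\ho\bbd)^{n,p}$: one has to make sure that the inequality
\[
\|\hat A[H]\|_{(\bbD\ho\bbd)^{n,p}} \;\leq\; C_{n,p}\,\|H\|_{(\widetilde{\bbD\ho\bbd})^{n,p}}
\]
holds on elementary tensors $F\otimes g$ and that all subsequent applications of Proposition \ref{FormuleIS1} and Lemma \ref{LemmeIS} commute with the density extensions. This is, however, a routine closability/density argument of exactly the kind used in the proof of Proposition \ref{FormuleIS1}, and presents no genuinely new difficulty; the rest of the proof is pure bookkeeping.
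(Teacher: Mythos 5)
Your proposal is correct and is essentially the argument the paper intends: the paper gives no written proof beyond the single sentence ``Combining Proposition \ref{FormuleIS1}, Lemma \ref{LemmeIS} and relation \eqref{formuleA} we obtain,'' and your write-up is a faithful (and usefully more explicit) elaboration of exactly that combination, including the second application of Proposition \ref{FormuleIS1} to $\hat{A}[H]$ needed to place $A[X_t]$ in $\bbD^{\infty}$ and the density/approximation checks on elementary tensors.
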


The proof of the next Proposition is similar to the previous ones and even easier, so we leave it to the reader:
\begin{Pro}\label{FormuleIS3} Let $G\in \LDP{\infty}$ then for all $t\in [0,T]$ 
\[ X_t =\int_0^t \int_\Xi G_s \, dZ_s\]
belongs to $\bbD^{\infty}$, and  for all $n\in \Ne$:
\begin{eqnarray*}
X_t^{(n\sharp )} &=& \int_0^t G_s^{(n\sharp)} \, dZ_s .
\end{eqnarray*}
Moreover, if $G$ belongs to $\bLDP{\infty}$ then $X_t$ belongs to $\bbbD^{\infty}$ and 
\[ A[X_t ] =\int_0^t \int_\Xi A[G_s ]\, dZ_s.\]
\end{Pro}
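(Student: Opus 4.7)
\bigskip

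\noindent\textbf{Proof proposal.} The plan is to approximate by elementary processes in $\LDP{\infty}^0$ (resp.\ $\bLDP{\infty}^0$) using the density lemma, compute everything by hand on such processes, and then extend by a density argument that uses assumption \eqref{CondCrochet} together with Burkholder--Davis--Gundy. The key structural fact we will exploit is that $Z$ lives on $(\Omega_2,\cA_2,\bbP_2)$, while $(\bbD,\cE)$ is the product of the Poisson Dirichlet structure on $\Omega_1$ with the trivial one on $\Omega_2$. Consequently, each increment $Z_{t\wedge t_{i+1}}-Z_{t\wedge t_i}$ has vanishing gradient, vanishing carr\'e du champ, and lies in $\ker A$, so the operators $\sharp$ and $A$ pass across stochastic integration against $Z$ without producing any boundary term.

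First, take $G\in\LDP{\infty}^0$ of the form $G_s=\sum_{i=0}^{m-1}F_i\mathbf{1}_{]t_i,t_{i+1}]}(s)$ with $F_i\in\bbD^\infty$ and $\cA_{t_i}$-measurable. Then $X_t=\sum_{i=0}^{m-1}F_i(Z_{t\wedge t_{i+1}}-Z_{t\wedge t_i})$. By Lemma~\ref{algebre} (applied with the increments of $Z$ viewed as elements of $\bbD^\infty$ on which $\sharp$ vanishes), $X_t\in\bbD^\infty$ and
\[X_t^\sharp=\sum_{i=0}^{m-1}F_i^\sharp(Z_{t\wedge t_{i+1}}-Z_{t\wedge t_i})=\int_0^t G_s^\sharp\,dZ_s.\]
Iterating this identity $n$ times gives $X_t^{(n\sharp)}=\int_0^t G_s^{(n\sharp)}\,dZ_s$ for every $n\in\Ne$, since the $\sharp$ operator never sees the $Z$-factor.

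Second, if additionally $G\in\bLDP{\infty}^0$ (so $F_i\in\bbbD^\infty$), the functional calculus for local Dirichlet forms applied to the product $F_i\cdot(Z_{t\wedge t_{i+1}}-Z_{t\wedge t_i})$ gives
\[A[F_i(Z_{t\wedge t_{i+1}}-Z_{t\wedge t_i})]=A[F_i](Z_{t\wedge t_{i+1}}-Z_{t\wedge t_i}),\]
because $A$ annihilates $\cA_2$-measurable variables and $\Gamma[F_i,Z_{t\wedge t_{i+1}}-Z_{t\wedge t_i}]=0$. Summing over $i$ yields $A[X_t]=\int_0^t A[G_s]\,dZ_s$, and applying the first part of the proposition to $A[G]\in\LDP{\infty}$ shows that $A[X_t]\in\bbD^\infty$, hence $X_t\in\bbbD^\infty$.

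Finally, extension by density. Given $G\in\LDP{\infty}$, pick a sequence $G^{(k)}\in\LDP{\infty}^0$ converging to $G$ in every $\LDP{n,p}$ norm. The identity \eqref{CondCrochet} gives $L^2$-continuity of $H\mapsto\int_0^\cdot H\,dZ$, and combined with Burkholder--Davis--Gundy together with the domination hypothesis on $[Z,Z]$ and on the finite-variation part of $Z$, one upgrades this to $L^p$-continuity for every $p\geq 1$. Applied coordinate-wise to $(G^{(k)})^{(n\sharp)}$, this gives convergence in $L^p(\bbP\times\hbbP^n)$ of $\int_0^t(G^{(k)})_s^{(n\sharp)}\,dZ_s$ to $\int_0^t G_s^{(n\sharp)}\,dZ_s$, and hence $X_t\in\bbD^\infty$ with the claimed formula. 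The same argument with $A[G^{(k)}]\to A[G]$ handles the $\bbbD^\infty$ statement.

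The main technical obstacle is precisely the last step: assumption \eqref{CondCrochet} is stated only as an $L^2$ bound, whereas the Fr\'echet topology of $\bbD^\infty$ requires control in every $L^p$. Getting from one to the other rests on BDG-type inequalities made available by the hypothesis that $[Z,Z]$ and the finite-variation part of $Z$ are dominated by Lebesgue measure uniformly; the rest of the proof is essentially bookkeeping on elementary processes.
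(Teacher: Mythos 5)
Your proof is correct and follows exactly the route the paper intends: the paper omits the proof of Proposition \ref{FormuleIS3} (``similar to the previous ones and even easier''), meaning precisely your scheme of reducing to elementary processes in $\LDP{\infty}^0$, computing explicitly by using that the Dirichlet structure is trivial on the factor carrying $Z$ (so $\sharp$, $\Gamma$ and $A$ pass through the $Z$-increments), and concluding by density. Your closing remark also correctly isolates the only genuine technical point --- upgrading the $L^2$ bound \eqref{CondCrochet} to $L^p$ continuity of $H\mapsto\int_0^\cdot H\,dZ$ via Burkholder--Davis--Gundy and the uniform domination of the bracket and the finite-variation part --- which is exactly what the paper relies on implicitly when it invokes $L^p$ estimates for such integrals in the proof of Proposition \ref{Solution}.
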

 
\noindent Finally, by the functional calculus developpped in the proofs of Propositions \ref{FormuleIS1} and \ref{FormuleIS2}, the following Lemma is also clear:
\begin{Le}\label{Compo} Let $c:\Omega_2\times\R^+\times \R^d \times \Xi\rightarrow\R^d$ be the coefficient of equation\eqref{eq} and $X$ be in
$(\LDP{\infty})^d$..
\begin{enumerate}
\item If $c$ satisfies hypothesis 1.c) of (R) then the process $(t,u)\rightarrow c(t,X_t ,u)$ belongs to 
$(\LDdP{\infty} )^d$.
\item If moreover $X$ belongs to $(\bLDP{\infty} )^d$ and $c$ satisfies hypothesis 1.\={c}) of (\={R}) then the process $(t,u)\rightarrow c(t,X_t ,u)$ belongs to 
$(\bLDdP{\infty} )^d$.
\end{enumerate}
\end{Le}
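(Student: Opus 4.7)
The plan is to obtain the result by the functional calculus for local Dirichlet forms (Section I.6 of Bouleau-Hirsch), exactly as in the proofs of Propositions \ref{FormuleIS1} and \ref{FormuleIS2}: I write explicit Faà-di-Bruno-type formulas for the iterated $\sharp$- and $\flat$-derivatives of $c(t,X_t,u)$, estimate each term in $L^p(\bbP\times\hbbP^k\times\nu\times\rho^\ell)$ using hypothesis (R) (resp.\ (\={R})) combined with H\"older's inequality, and conclude by density. Since $X_t(w)$ does not depend on $u$, the $\sharp$-gradient only hits the $x$-slot of $c$ while the $\flat$-gradient only hits the $u$-slot; this makes the two derivatives commute up to routine bookkeeping.

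First I would fix $n\in\Ne$ and $p\geq 1$, approximate each coordinate $X^i\in\LDP{\infty}$ by simple processes in $\LDP{\infty}^0$, and work with $X_t$ lying in polynomial combinations of terms $\tN(f_j)$ with $f_j\in L^2(dt)\otimes\bbd^\infty$. For such $X$, the chain rule gives
\[c(t,X_t,u)^{\sharp}=\sum_{j=1}^d \partial_{x_j}c(t,X_t,u)\,(X_t^j)^{\sharp},\qquad c(t,X_t,u)^{\flat}(\cdot,r)=c^{\flat}(t,X_t,\cdot,r),\]
and since these two operations commute (as shown in the remark following Proposition \ref{FormuleIS1}), iterating produces, for each mixed order $(k\sharp),(\ell\flat)$, a finite sum of terms of the form
\[D_x^{\alpha}c^{(\ell\flat)}(t,X_t,u,r_1,\dots,r_\ell)\;\prod_{i=1}^{|\alpha|}(X_t^{j_i})^{(m_i\sharp)}(w,w_{\sigma(\cdot)}),\]
with $\sum_i m_i=k$ and $\sigma$ a permutation that distributes the auxiliary variables $w_1,\dots,w_k$ among the factors.

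Next I would estimate the $L^p$-norm of each such summand. By H\"older with exponents $\tfrac1p=\tfrac1{q_0}+\sum_i\tfrac1{q_i}$, the norm is bounded by
\[\bigl\|\,\sup_{t,x}\|D_x^{\alpha}c(t,x,\cdot)\|_{\bbd^{\ell,q_0}}\bigr\|_{L^{q_0}(\bbP_2)}\;\prod_{i=1}^{|\alpha|}\|X_t^{j_i}\|_{\bbD^{m_i,q_i}}.\]
Hypothesis 1.a) together with 1.c) of (R) makes the first factor finite, while $X\in(\LDP{\infty})^d$ makes each other factor $L^2$-integrable in $t$ after a further H\"older step. Summing over $\alpha$ and permutations and integrating in $t$ bounds $\|c(\cdot,X,\cdot)\|_{L^2([0,T];(\bbD\ho\bbd)^{n,p})}$, uniformly along the approximating sequence; standard closability then gives item 1.

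For item 2, I additionally need $c(t,X_t,u)\in (\widetilde{\bbD\ho\bbd})^{\infty}$, i.e.\ control of $\hat A[c(t,X_t,u)]=A[c(\cdot,X_t,u)](w)+a[c(t,X_t,\cdot)](u)$ in the same Sobolev norms. The second summand is handled by 1.\={c}), which places $D_x^\alpha c(t,x,\cdot)$ in $\bbbd^\infty$ uniformly in $(t,x)$, so Lemma \ref{FormuleAdj} plus the chain rule for $a$ yields an expression of the same type as above with $D_x^\alpha c$ replaced by $D_x^\alpha a[c]$ and $D_x^\alpha\gamma[c,c]$. The first summand is a sum of products involving $D_x^\alpha c$, $A[X_t^j]\in\bbD^\infty$, and $\Gamma[X_t^j,X_t^k]\in\bbD^\infty$ (using $X\in(\bLDP{\infty})^d$ and Corollary \ref{Gamma}), which is again estimated by the same H\"older argument.

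The main obstacle is purely combinatorial: organizing the Faà-di-Bruno expansion for the mixed $\sharp$-$\flat$-$A$-derivatives and matching H\"older exponents so that each cross-product lies in the correct $L^p$. No new analytic idea beyond the chain rule, Lemmas \ref{algebre}--\ref{Gamma}, and Proposition \ref{FormuleIS2} is required, so once the formulas are written out the bounds follow term by term from hypotheses (R) and (\={R}); density of $\LDP{\infty}^0$ in $\LDP{\infty}$ (and its $\bLDP{\infty}$ analogue) then transfers the estimates to a general $X$.
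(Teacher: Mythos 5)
Your proposal follows exactly the route the paper intends: the authors give no written proof of Lemma \ref{Compo}, merely asserting that it is ``clear'' from the functional calculus developed in the proofs of Propositions \ref{FormuleIS1} and \ref{FormuleIS2}, and your Fa\`a-di-Bruno expansion of the mixed $\sharp$/$\flat$ (and $A$, $a$) derivatives of $c(t,X_t,u)$ together with H\"older estimates from hypotheses (R) and (\={R}) and a density argument is precisely the fleshed-out version of that claim. Your write-up is correct and in fact more detailed than the paper itself.
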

\subsection{Existence of smooth density for the solution}
\begin{Pro}\label{Solution} Under hypotheses (R), the equation \eqref{eq} admits a unique solution, $X$,  in {$(\LDP{\infty})^d$}.
\end{Pro}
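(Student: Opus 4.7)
The plan is to construct the solution by Picard iteration carried out directly at the level of the Sobolev space $(\LDP{\infty})^d$, and to identify the limit with the classical $L^2$-solution whose existence and uniqueness are assumed in (R).3. Set $X^0_t := x_0$ for $t \in [0,T]$ and, for $n \geq 0$,
$$X^{n+1}_t = x_0 + \int_0^t \int_\Xi c(s, X^n_{s^-}, u)\, \tN(ds, du) + \int_0^t \sigma(s, X^n_{s^-})\, dZ_s.$$
First I would prove by induction on $n$ that $X^n \in (\LDP{\infty})^d$. Assuming this for $X^n$, Lemma \ref{Compo}(1) gives $c(\cdot, X^n_{\cdot^-}, \cdot) \in (\LDdP{\infty})^d$; hypothesis (R).2 ensures $\sigma(\cdot, X^n_{\cdot^-}) \in (\LDP{\infty})^{d \times n}$; Propositions \ref{FormuleIS1} and \ref{FormuleIS3} then yield $X^{n+1} \in (\LDP{\infty})^d$ together with explicit expressions for each iterated gradient $(X^{n+1})^{(k\sharp)}$.

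The core step is to produce estimates on the $\LDP{k,p}$-semi-norms that are uniform in $n$. I would argue by induction on the derivation order $k$. The case $k=0$ --- an $L^p$ bound on $\sup_{t \leq T} |X^n_t|$ --- follows from BDG-type inequalities for the compensated Poisson integral together with assumption \eqref{CondCrochet} on $Z$, the growth bound (R).1.b on $c(t,0,\cdot)$, and the Lipschitz character of $\sigma$ from (R).2, via a classical Gronwall argument. For $k \geq 1$, differentiating the Picard identity via Proposition \ref{FormuleIS1} (Poisson part) and Proposition \ref{FormuleIS3} (semimartingale part), combined with the chain rule applied to the compositions $c(s, X^n_{s^-}, u)$ and $\sigma(s, X^n_{s^-})$, yields a linear SDE for $(X^{n+1})^{(k\sharp)}$ in which the leading term is linear in $(X^n)^{(k\sharp)}$ with coefficients $D_x c$ and $D_x \sigma$, while the inhomogeneous part involves only gradients of $X^n$ of strict order less than $k$, multiplied by higher $x$-derivatives of $c,\sigma$ and by the $\flat$-gradient $c^\flat$. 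Assumptions (R).1.a, (R).1.c and (R).2 guarantee these coefficients lie in $\bigcap_{p \geq 1} L^p$, so a further Gronwall estimate closes the induction.

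Once uniform Sobolev bounds are in hand, the same estimate applied to the difference $X^{n+1} - X^n$ produces a Picard contraction on a small time interval (extended to $[0,T]$ by concatenation) in every norm $\|\cdot\|_{\LDP{k,p}}$; hence $(X^n)$ is Cauchy in each such space and converges to a limit $X \in (\LDP{\infty})^d$ that satisfies \eqref{eq}, and uniqueness in $(\LDP{\infty})^d$ follows from $L^2$-uniqueness in (R).3.

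The main obstacle is the combinatorial bookkeeping of the $k$-th order derivatives: Proposition \ref{FormuleIS1} expresses $(X^{n+1}_t)^{(k\sharp)}$ as a sum of $2^k$ stochastic integrals whose integrands mix $\sharp$- and $\flat$-derivatives of $c(s, X^n_{s^-}, u)$ in every proportion, and the chain rule for each composition $c(s, X^n_{s^-}, u)$ or $\sigma(s, X^n_{s^-})$ introduces a Fa\`a di Bruno expansion involving arbitrary products of lower-order gradients of $X^n$. The delicate point is to verify that every coefficient arising in this expansion lies in $\bigcap_{p \geq 1} L^p$, so that H\"older's inequality can isolate the single highest-order gradient factor (controlled by the inductive hypothesis) against a coefficient bounded in every $L^p$; this is exactly what the uniform integrability of $\sup_{t,x} |D^\alpha_x c|$ and of $\sup_{t,x} \|D^\alpha_x c\|_{\bbd^{n,q}}$ imposed by (R).1 is designed to provide.
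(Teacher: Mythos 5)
Your overall strategy coincides with the paper's: Picard iteration, induction on the order of derivation, the representation of $(X^{r+1})^{(k\sharp)}$ via Propositions \ref{FormuleIS1} and \ref{FormuleIS3} together with Lemma \ref{Compo}, and Gronwall-type bounds yielding estimates uniform in the iteration index. The one place where you genuinely diverge is the passage from uniform bounds to membership of the limit in $(\LDP{k,p})^d$. The paper does \emph{not} run a contraction on differences at order $k$: it observes that the bounded sequence $(X^r)$ lives in the reflexive Banach space $(\LDP{k,p})^d$, extracts convex combinations converging strongly to some $Y$, identifies $Y$ with $X$ through the already known $L^p$-convergence of the Picard scheme, and only then writes the linear SDE satisfied by $X^{(k\sharp)}$ to upgrade to strong convergence of $X^{r,(k\sharp)}$. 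Your route --- a direct Cauchy estimate on $X^{r+1}-X^r$ in each $\LDP{k,p}$ norm --- also works, but be aware that at order $k\geq 1$ the difference recursion is not a pure contraction: splitting
$$D_xc(s,X^r)X^{r,(k\sharp)}-D_xc(s,X^{r-1})X^{r-1,(k\sharp)}
=D_xc(s,X^r)\bigl(X^{r,(k\sharp)}-X^{r-1,(k\sharp)}\bigr)+\bigl(D_xc(s,X^r)-D_xc(s,X^{r-1})\bigr)X^{r-1,(k\sharp)}$$
produces, besides the contracting term, a perturbation controlled by $\|X^r-X^{r-1}\|$ in lower-order norms times the uniform bound on $X^{r-1,(k\sharp)}$ (via H\"older and (R).1.a). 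So the correct inequality is of the form $a_{r+1}\leq\lambda a_r+\varepsilon_r$ with $\varepsilon_r\to 0$ by the inductive hypothesis at orders $<k$, which still forces $a_r\to 0$; this should be stated explicitly rather than invoking a contraction. The trade-off is that your argument avoids the weak-compactness/convex-combination step but requires this extra bookkeeping on the perturbation terms, whereas the paper's argument gets membership of $X$ in the Sobolev space almost for free from reflexivity and postpones the strong convergence to a separate, standard Picard argument for the linear equation satisfied by the gradient.
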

\begin{proof} Let us first prove that for all $p\geq 1$, $X$ belongs to $(\LDP{1,p})^d$. We follow the same proof 
as the one of Proposition 8 in \cite{bouleau-denis2} which corresponds to the case $p=2$. \\
 We define inductively a sequence $(X^r)$ of
$\R^d$-valued semimartingales by $X^0 =x$ and
\begin{equation}\label{picard}\forall r\in\N,\ \forall t\in [0,T],\ X^{r+1}_t =x_0 +
\int_0^t \int_\Xi c(s,X^r_{s^-},u)\tN (ds,du)+\int_0^t \sigma
(s,X^r_{s^-})dZ_s.\end{equation}As a consequence of Lemma \ref{Compo}, Propositions
\ref{FormuleIS1} and \ref{FormuleIS3}, it is clear that for all $r$, $X^r$ belongs to
$(\LDP{\infty})^d$ and that we have $\forall t\in [0,T]$
 \begin{eqnarray*}
X_t^{r+1 ,\sharp}& =&\int_0^t\int_U D_x c(s,X^r_{s-},u)\cdot
X^{r,\sharp}_{s-}\tN (ds,du)+\int_0^t\int_{U\times R}
c^{\flat}(s,X^r_{s-},u,r)N\odot\rho (ds,du,dr)\\&&+\int_0^t D_x
\sigma (s,X^r_{s-})\cdot X^{r,\sharp}_{s-} dZ_s.\end{eqnarray*}
This  is the iteration procedure due to \'Emile Picard and it
is well-known that for all $p\geq 1$ \begin{equation}\label{Picard} \lim_{r\rightarrow +\infty}E[\sup_{t\in
[0,T]} |X_t -X^r_t |^p] =0.\end{equation} Moreover, thanks to the hypotheses
we made on the coefficients, it is easily seen (see \cite{bichteler-jacod} or \cite{jacod}) that there exists a
constant {$\kappa_{p,x}$} such that for all $r\in\Ne$ and all $t\in [0,T]$
\begin{eqnarray*}
\E\hE\left[ |X_t^{r+1 ,\sharp}|^p\right] &\leq&{\kappa_{p,x}} \left( 1+\int_0^t \E\hE
\left[|X^{r,\sharp}_{s-}|^p \right]ds \right)\end{eqnarray*} so that by
induction we deduce
\[ \forall r\in\N ,\ \forall t\in [0,T] ,\ \E\hE \left[|X_t^{r ,\sharp}|^p\right]\leq {\kappa_{p,x}} e^{{\kappa_{p,x}} t}.\]
Hence, the sequence $(X^r )$ is bounded in $(\LDP{1,p})^d$ which is a reflexive  Banach space. Therefore, there is a sequence of convex combinations of
$X^r$ which converges to a process $Y\in (\LDP{1,p})^d$. But, by \eqref{Picard} we a priori know that $X^r$ tends to $X$ in $L^p ([0,T];\R^d )$ so that $Y$ is nothing but  $X$. This proves that $X$ belongs to $(\LDP{1,p})^d$. Moreover, still by Propositions \ref{FormuleIS1} and \ref{FormuleIS2} we know that $X^{\sharp}$ satisfies
 \begin{eqnarray*}
X_t^{\sharp}& =&\int_0^t\int_U D_x c(s,X_{s-},u)\cdot
X^{\sharp}_{s-}\tN (ds,du)+\int_0^t\int_{X\times R}
c^{\flat}(s,X_{s-},u,r)N\odot\rho (ds,du,dr)\\&&+\int_0^t D_x
\sigma (s,X_{s-})\cdot X^{\sharp}_{s-} dZ_s.\end{eqnarray*}
This ensures, by standard facts on the Picard's iteration method, that for all $p\geq 1$
\[ \lim_{r\rightarrow +\infty}\E\hE [ \int_0^T | X^{r,\sharp}_s -X^\sharp_s |^p ds ]=0,\]
in other words, we have proved that for all $p\geq 1$, $X^r$ converges to $X$ in $(\LDP{1,p} )^p$.\\
We now proceed by recurrence. Let $n\geq 2$, assume that we have established that $X^r$ converges to $X$ in $(\LDP{(n-1),p})^d$ for all $p\geq 1$. Then, by 
Propositions \ref{FormuleIS1},\, \ref{FormuleIS3} and the functional calculus we have for all $t\in [0,T]$ and all $r\in\Ne$:
\begin{eqnarray*}
X_t^{r+1,(n\sharp )}&=&\int_0^t \int_\Xi D_x c(s,X^r_{s^-}, u)\cdot X_{s^-}^{r,(n\sharp)}\,\tN (ds,du )\\
&&+\sum_{j=1}^n \int_0^t \int_{\Xi\times R^{n-j}}\Phi^j_r (s,u,r_1 ,\cdots, r_{n-j} )\, \tN\odot\rho^{\odot (n-j)} (ds ,du, dr_1,\cdots ,dr_{n-j})\\
&&+ \int_0^t D_x \sigma (s, X^r_{s^-})\cdot X_{s^-}^{r,(n\sharp)}\, dZ_s +\int_0^t \Pi_r (s) \, dZ_s,
\end{eqnarray*}
where:
\begin{itemize}
\item for all $j\in \{ 1,\cdots ,n\}$, $\Phi^j_r $ can be expressed as a sum such that each term is the  product of 
$D_x^k c^{(l\sharp )} (s, X^r_{s^- },u, r_1 ,\cdots ,r_l )$ with $k,l\in \{0,\cdots ,n\}$, and of derivatives of $X$ of order strictly less than $n$;
\item $\Pi_r $ is a sum such that each term is the  product of 
$D_x^k \sigma (s, X^r_{s^- })$ with $k\in \{1,\cdots ,n\}$, and of derivatives of $X$ of order strictly less than $n$.
\end{itemize}
Thanks to the hypotheses we made on $c$ and $\sigma$ and as $X^r$ converges to $X$ in $(\LDP{(n-1),p})^d$, it is clear that there exist 
predictable processes $\Phi^j$, $j=1\cdots n$ and $\Pi$ such that for all $p\geq 1$
\[ \lim_{r\rightarrow +\infty} \E\hE [\int_0^T\int_\Xi \int_{R^{n-j}} | \Phi^j_r -\Phi^j |^p (s,u, r_1 ,\cdots ,r_{n-j})\nu (du)\rho^n (dr_1 \cdots dr_{n-j})]=0,\]
\[\lim_{r\rightarrow +\infty} \E\hE [\int_0^T | \Pi_r -\Pi |^p (s) ds]=0.\]
From this, as in the case $n=1$, we conclude that for all $p\geq 1$,  $(X^{r})_r$ is bounded  in $(\LDP{n,p})^d$ hence a convex combination of $X^r$ converges 
in $(\LDP{n,p})^d$ to an element which is nothing but $X$ and then as $X^{(n\sharp )}$ satisfies a s.d.e., standard consideration on the Picard's iteration permits to conclude that in fact 
\[ \lim_{r\rightarrow +\infty} \E\hE [\int_0^T |X^{r,(n\sharp}_{s^-} -X^{(n\sharp)}_{s^-}|^p ds ]=0,\]
i.e. $(X^r )$ tends to $X$ in $(\LDP{n,p})^d$.
\end{proof}
In the next proposition, for all $i\in\{1,\cdots ,d\}$, we denote by $c_i$ (resp. $\sigma_i$) the $i$-th coordinate of coefficient $c$ (resp. $\sigma$) and we put $X=(X_1 ,\cdots ,X_d )$.
\begin{Pro}\label{FormuleEqA} Under hypotheses (\={R}), $X$ belongs to $\bLDP{\infty}$ and we have for all $t\in [0,T]$ and all $i\in\{ 1,\cdots ,d\}$:
\begin{eqnarray*}
A[X_{i,t} ]&=&\int_0^t \int_X \left(\sum_{j=1}^d\displaystyle\frac{\partial  c_i}{\partial x_j}(s,X_{s^-},u)A[ X_{j,s^-}]+\frac12\sum_{j,k=1}^d\displaystyle\frac{\partial^2 c_i}{\partial x_j\partial x_k }(s,X_{s^-},u)\Gamma [X_{j,s^-},X_{k,s^-}]\right)\, \tN (ds,du)\\&&
+\int_0^t \int_\Xi a[c_i (s,X_{s^-},\cdot )](u)\, \tN (ds ,du)\\
&&+\int_0^t \left( \sum_{j=1}^d\displaystyle\frac{\partial  \sigma_i}{\partial x_j}(s,X_{s^-})A[ X_{j,s^-}]+\frac12\sum_{j,k=1}^d\displaystyle\frac{\partial^2 \sigma_i}{\partial x_j\partial x_k }(s,X_{s^-})\Gamma [X_{j,s^-},X_{k,s^-}]\right)\, dZ_s
\end{eqnarray*}
\end{Pro}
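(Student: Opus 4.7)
The plan is to reprise the Picard iteration of Proposition~\ref{Solution} inside the smaller spaces $(\bLDP{n,p})^d$, using at each step the functional calculus for the generator of a local Dirichlet form admitting a carré du champ. Since Proposition~\ref{Solution} already places $X$ in $(\LDP{\infty})^d$, I only need to upgrade the approximating sequence $(X^r)$ defined in~\eqref{picard} so that $A[X^r]$ is uniformly bounded in the relevant Sobolev norms, and then pass to the limit by weak compactness in the reflexive Banach space $\bLDP{n,p}$; the limit will coincide with $X$ thanks to the $L^p$-convergence already available.

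To express $A$ applied to the right-hand side of~\eqref{picard}, I would combine three ingredients. Lemma~\ref{Compo}(2) together with hypothesis~$1.\bar{\text{c}})$ ensures that $(s,u)\mapsto c(s,X^r_{s^-},u)\in(\bLDdP{\infty})^d$ as soon as $X^r\in(\bLDP{\infty})^d$. Proposition~\ref{FormuleIS2} and relation~\eqref{formuleA} then give
\[ A\!\left[\int_0^t\!\!\int_\Xi c_i(s,X^r_{s^-},u)\,\tN(ds,du)\right]=\int_0^t\!\!\int_\Xi\!\left(A[c_i(s,\cdot,u)\circ X^r_{s^-}]+a[c_i(s,X^r_{s^-},\cdot)](u)\right)\tN(ds,du),\]
while Proposition~\ref{FormuleIS3} gives $A\!\left[\int_0^t\sigma_i(s,X^r_{s^-})\,dZ_s\right]=\int_0^t A[\sigma_i(s,\cdot)\circ X^r_{s^-}]\,dZ_s$. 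Finally, the functional calculus for the generator of a local Dirichlet form with carré du champ (cf.\ \cite{bouleau-hirsch2}, Section~I.6) yields, for any smooth $\varphi$ and $Y\in(\bbbD^\infty)^d$,
\[ A[\varphi(Y)]=\sum_{j=1}^d\partial_j\varphi(Y)\,A[Y_j]+\tfrac12\sum_{j,k=1}^d\partial^2_{jk}\varphi(Y)\,\Gamma[Y_j,Y_k].\]
Applied with $\varphi=c_i(s,\cdot,u)$ and $\varphi=\sigma_i(s,\cdot)$, this produces precisely the two chain-rule blocks appearing in the announced formula for $A[X_{i,t}]$.

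With this candidate equation for $A[X^{r+1}]$ in hand, uniform control is obtained by induction on the differentiation order $n$. For $n=0$, a Burkholder-type estimate of the form $\E\hE|A[X^{r+1}_t]|^p\leq\kappa_p\bigl(1+\int_0^t\E\hE|A[X^r_s]|^p\,ds\bigr)+R^r_t$ is derived, where the remainder $R^r_t$ is bounded uniformly in $r$ thanks to Proposition~\ref{Solution}, to the $L^p$-integrability of the derivatives of $c$ and $\sigma$ supplied by hypotheses~$1.\bar{\text{c}})$ and~2, to the control of $\Gamma[X^r_{j,s^-},X^r_{k,s^-}]$ through Lemma~\ref{algebre} and Corollary~\ref{Gamma}, and to assumption~\eqref{CondCrochet} on $Z$. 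Gronwall then yields uniform bounds, weak compactness in $\bLDP{0,p}$ extracts a convex-combination limit, and this limit coincides with $X$ by the already established $L^p$-convergence. The induction step from $n$ to $n+1$ is analogous: one differentiates the equation for $A[X^{r+1}]$ using Propositions~\ref{FormuleIS1} and~\ref{FormuleIS3}, the mixed Leibniz terms being handled by the algebra property of $\bbD^\infty$.

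The main obstacle is the bookkeeping of the chain-rule expansion. The $\hat{A}$-contribution applied to $c(s,X^r_{s^-},u)$ splits into an $a$-part acting on the jump variable $u$, which is exactly what forces the strengthened hypothesis~$1.\bar{\text{c}})$ and the use of $\bbbd^{n,p}$ instead of $\bbd^{n,p}$, and into an $A$-part acting on $w$ through the second-order operator above. One must therefore verify at every induction step that mixed products such as $\partial^2_{jk}c_i(s,X^r_{s^-},u)\,\Gamma[X^r_{j,s^-},X^r_{k,s^-}]$ lie in the right $(\bLDdP{n,p})^d$-space, which is where the algebra lemmas of Section~3 are repeatedly invoked. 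Once these pieces fit together, passing to the limit $r\to\infty$ in the equation satisfied by $A[X^{r+1}_t]$ delivers the identity stated for $A[X_{i,t}]$.
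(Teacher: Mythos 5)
Your proposal is correct and follows essentially the same route as the paper: Picard iteration combined with Lemma~\ref{Compo}, Proposition~\ref{FormuleIS2}/\ref{FormuleIS3} and the second-order functional calculus $A[\varphi(Y)]=\sum_j\partial_j\varphi(Y)A[Y_j]+\frac12\sum_{j,k}\partial^2_{jk}\varphi(Y)\Gamma[Y_j,Y_k]$, which is exactly how the paper produces the two chain-rule blocks and the $a[c_i(s,X_{s^-},\cdot)]$ term. The only (minor) divergence is in identifying the limit: where you invoke uniform Gronwall bounds plus weak compactness and convex combinations, the paper introduces the solution $Y$ of the limiting linear SDE, shows $A[X^r]\to Y$ strongly in $L^p([0,T]\times\Omega)$ using the convergence of $\Gamma[X^r]$ to $\Gamma[X]$, and concludes $Y=A[X]$ by closedness of $A$ — a slightly more direct way to obtain the stated identity.
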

\begin{proof} We keep the same notations as in the prof of the previous Proposition, so we still consider $(X^r )$ the Picard's approximation of $X$ given by 
relation \eqref{Picard}. By Proposition \ref{FormuleIS3}, Lemma \ref{Compo} we know that for all $r\in\Ne$, $X^r$ belongs to 
$(\bLDP{\infty})^d$ and that moreover:
\begin{eqnarray*}
A[X^{r+1}_{t} ]&=&\int_0^t \int_X\left( A[ c(s,X^r_{s^-},u)]+a[c(s,X_{s^-}^r ,\cdot )](u)\right)\, \tN (ds ,du)\\
&&+\int_0^t  A[ \sigma(s,X^r_{s^-})]\, dZ_s .
\end{eqnarray*}
And, by the functional calculus (see Corrolary 6.1.4. in \cite{bouleau-hirsch2}), this yields for all $i\in \{ 1,\cdots ,d\}$ (we apologize for the apparently complicated computation  but in fact quite natural):
\begin{eqnarray*}
A[X^{r+1}_{i,t} ]&=&\int_0^t \int_\Xi \left(\sum_{j=1}^d\displaystyle\frac{\partial  c_i}{\partial x_j}(s,X^r_{s^-},u)A[ X^r_{j,s^-}]+\frac12\sum_{j,k=1}^d\displaystyle\frac{\partial^2 c_i}{\partial x_j\partial x_k }(s,X^r_{s^-},u)\Gamma [X^r_{j,s^-},X^r_{k,s^-}]\right)\, \tN (ds,du)\\&&
+\int_0^t \int_\Xi a[c_i (s,X^r_{s^-},\cdot )](u)\, \tN (ds ,du)\\
&&+\int_0^t \left(\sum_{j=1}^d\displaystyle\frac{\partial  \sigma_i}{\partial x_j}(s,X^r_{s^-})A[ X^r_{j,s^-}]+\frac12\sum_{j,k=1}^d\displaystyle\frac{\partial^2 \sigma_i}{\partial x_j\partial x_k }(s,X^r_{s^-})\Gamma [X^r_{j,s^-},X^r_{k,s^-}]\right)\, dZ_s
\end{eqnarray*}
Let us now introduce $Y=(Y_1 ,\cdots, Y_d )$ the solution of the following s.d.e:
\begin{eqnarray*}
Y_{i,t}&=&\int_0^t \int_\Xi \left(\sum_{j=1}^d\displaystyle\frac{\partial  c_i}{\partial x_j}(s,X_{s^-},u)Y_ {j,t} +\frac12\sum_{j,k=1}^d\displaystyle\frac{\partial^2 c_i}{\partial x_j\partial x_k }(s,X_{s^-},u)\Gamma [X_{j,s^-},X_{k,s^-}]\right)\, \tN (ds,du)\\&&
+\int_0^t \int_\Xi a[c_i (s,X_{s^-},\cdot )](u)\, \tN (ds ,du)\\
&&+\int_0^t \left(\sum_{j=1}^d\displaystyle\frac{\partial  \sigma_i}{\partial x_j}(s,X_{s^-})Y_ {j,t} +\frac12\sum_{j,k=1}^d\displaystyle\frac{\partial^2 \sigma_i}{\partial x_j\partial x_k }(s,X_{s^-})\Gamma [X_{j,s^-},X_{k,s^-}]\right)\, dZ_s
\end{eqnarray*}
In the previous Proposition, we have proved that $X^r$ converges to $X$ in $\LDP{\infty}$ so for all $p\geq 1$:
\[ \lim_{r\rightarrow +\infty}\E [ \int_0^T | \Gamma [X_s]-\Gamma [X^r_s ]|^p \, ds =0.\]
From this, it is standard to prove that $A[ X^r ]$ converges to $Y$ in $L^p ([0,T]\times \Omega )$.\\
This implies that $X^r$ is a Cauchy sequence in $\bLDP{n,p}$ for all $(n,p)$ hence it converges to 
a limit which is $X$  and necessary, $Y=A[X]$. This ends the proof
\end{proof}
We are now able to give the main Theorem of this section. {For this we need} some processes introduced in \cite{bouleau-denis2}.\\
 First, the $\R^{d\times d}$-valued process $U_s$ defined by
$$dU_s=\sum_{j=1}^nD_x\sigma_{.,j}(s,X_{s-})dZ_s^j.$$
Then the  $\R^{d\times d}$-valued
process which is the derivative of the flow generated
by $X$:
\begin{eqnarray*}
K_t &=& I+\int_0^t\int_\Xi D_x c(s,X_{s-} ,u)K_{s-} \tN (ds ,du)
+\int_0^t dU_sK_{s-}
\end{eqnarray*}
\noindent Under our
hypotheses, for all $t\geq 0$, the matrix $K_t$ is invertible and its inverse   $\bK_t =(K_t )^{-1}$ satisfies:
\begin{eqnarray*}
\bK_t &=& I -\int_0^t\int_\Xi \bK_{s-} (I+D_x c(s,X_{s-} ,u))^{-1} {
D_x c(s,X_{s-} ,u)}\tN (ds ,du)
\\&&-\int_0^t \bK_{s-} dU_s+\sum_{s\leq t}\bK_{s-}(\Delta U_s)^2(I+\Delta U_s)^{-1}+\int_0^t\bK_s d<U^c,U^c>_s.
\end{eqnarray*}
The key property is  that under hypotheses (R), we have the following relation (see \cite{bouleau-denis2}, Theorem 10) for all $t\in [0,T]$,
\begin{eqnarray*}
\Gamma [X_t ]&=&K_t  \int_0^t \int_\Xi\bK_{s} \gamma[c(s
,X_{s-} ,\cdot )]\bK_{s}^{\ast}\, N (ds, du)
K_t^{\ast},
\end{eqnarray*}
where for any matrix $M$, $M^{\ast}$ denotes its transposed. 
\begin{Th}\label{Thregul} Assume hypotheses (\={R}). Let $t>0$,  if 
\[ \left(  \int_0^t \int_\Xi\bK_{s} \gamma[c(s
,X_{s-} ,\cdot )]\bK_{s}^{\ast}\, N (ds, du)\right)^{-1}\in\bigcap_{p\geq 1} L^p (\bbP, \R^{d\times d} )\]
then $X_t$  admits a density which 
belongs to $C^{\infty} (\R^d )$.
\end{Th}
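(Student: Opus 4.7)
The plan is to reduce the theorem to an application of Proposition \ref{Densite}, which states that if $X\in (\bbbD^\infty)^d$ and $(\Gamma[X])^{-1}\in\bigcap_{p\geq 1} L^p(\bbP;\R^{d\times d})$, then $X$ admits a $C^\infty$ density. Thus two things must be verified for the random vector $X_t$: that it lives in $(\bbbD^\infty)^d$, and that its Malliavin-type matrix has inverse moments of every order.

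For the first point, the plan is to chain Propositions \ref{Solution} and \ref{FormuleEqA}. Proposition \ref{Solution} (proved under (R)) ensures $X\in(\LDP{\infty})^d$, so in particular each $X_t\in (\bbD^\infty)^d$. The strengthened hypothesis (\={R}) is exactly what is needed in Proposition \ref{FormuleEqA} to identify $A[X_t]$ as the solution of an auxiliary SDE and to show $X\in(\bLDP{\infty})^d$, and a fortiori $X_t\in(\bbbD^\infty)^d$ for every $t\in[0,T]$. These two propositions have already been established, so this step is essentially a citation.

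For the second point, I would use the explicit representation recalled just before the theorem:
\[ \Gamma[X_t] = K_t\, M_t\, K_t^{\ast}, \qquad M_t:=\int_0^t\!\!\int_\Xi \bK_{s}\,\gamma[c(s,X_{s-},\cdot)]\,\bK_{s}^{\ast}\, N(ds,du). \]
Hence $\det\Gamma[X_t] = (\det K_t)^2 \det M_t$ and $(\Gamma[X_t])^{-1} = \bK_t^{\ast}\, M_t^{-1}\, \bK_t$. By assumption $M_t^{-1}$ has moments of all orders. Standard $L^p$ estimates for the linear SDE defining $K$ and $\bK$ under (\={R}) (together with the invertibility of $I+\sum_j D_x\sigma_{\cdot,j}(s,X_{s-})\Delta Z_s^j$ built into hypothesis~3 of (R)) show that both $K_t$ and $\bK_t$ have all $L^p$ moments. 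H\"older's inequality then gives $(\Gamma[X_t])^{-1}\in\bigcap_{p\geq 1}L^p(\bbP;\R^{d\times d})$, and Proposition \ref{Densite} concludes the proof.

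The only delicate step is the verification $X_t\in (\bbbD^\infty)^d$: the gradient Sobolev regularity $X\in(\LDP{\infty})^d$ from Proposition \ref{Solution} is not enough, because membership of $X_t$ in $\cD(A)$ with $A[X_t]\in\bbD^\infty$ is required, and this genuinely relies on (\={R}) and on the combined analysis of $\sharp$- and $A$-derivatives in Proposition \ref{FormuleEqA}. Once this is in hand, the rest is a purely algebraic manipulation of the representation of $\Gamma[X_t]$ together with moment estimates already standard in the Picard-iteration framework used earlier in the section.
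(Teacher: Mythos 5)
Your proposal is correct and follows essentially the same route as the paper: reduce to Proposition \ref{Densite}, use Proposition \ref{FormuleEqA} under (\={R}) to get $X_t\in(\bbbD^{\infty})^d$, and then exploit the factorization $\Gamma[X_t]=K_t\,M_t\,K_t^{\ast}$ together with the $L^p$-moments of $K_t$ and $\bK_t$ and H\"older's inequality to verify the integrability of $(\Gamma[X_t])^{-1}$. The paper's own proof is merely a terser version of this same argument.
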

\begin{proof} Let $t\in [0,T]$. The idea is to apply Proposition \ref{Densite} to $X_t$. Clearly, it remains to prove that 
$(\Gamma [X] )^{-1}$ belongs to $\bigcap_{p\geq 1} L^p (\bbP, \R^{d\times d} )$. But  this is obvious because 
$K_t$ belongs to $\bigcap_{p\geq 1} L^p (\bbP, \R^{d\times d} )$.
\end{proof}
\noindent {\bf Remark:} Let us {emphasize that the L\'evy measure is not assumed to possess a density, so that} the regularity of density for the solution is obtained under weaker hypotheses than those of  L\'eandre (cf \cite{leandre1} \cite{leandre2}) or other authors (see \cite{bichteler-gravereaux-jacod}).\\
Moreover, in these works and in some other works dealing with {non necessarily absolutely continuous L\'evy measure, a growth condition is supposed} near the origin of the form:
$$\liminf_{\rho\rightarrow 0} \rho^{-\alpha}\int_{\{|x|<\rho\}} |x|^2 d\nu (x)>0,$$
with $\rho\in (0,2)$, that we do not assume.
\section{Applications}
\subsection{The regular case}
As in Proposition 11 in \cite{bouleau-denis2}, we assume that  $\Xi$ is a topological space and that coefficient $c(s,x,u)$ is regular with respect to its argument $u$ governing the jumps size, this ensures existence of the density. The regularity of the density will be the consequence of an elliptic-type assumption on $c$. More precisely we have 
 \begin{Pro}{\label{regularcase} }Assume hypotheses (\={R}), that $\Xi$ is a topological space and  that the intensity measure $ds\times\nu$ of $N$ is such that $\nu$ has an infinite mass near some point $u_0$ in $\Xi$. Assume that the matrix $(s,x,u)\rightarrow\gamma[c(s,x,\cdot)](u)$ is continuous on a neighborhood of $(0,x_0,u_0)$ and  invertible at $(0,x_0,u_0)$. Assume moreover that it satisfies the following (local) ellipticity assumption:  \[ \forall (s',x,u)\in ]0,s]\times\R^d \times \cO,\ \gamma [ c(s' ,x,u]\geq \displaystyle\frac{1}{1+|x|^\delta}\psi (u) I_d ,\]
Where  $\geq$ denotes the order relation in the set of symmetric and positive matrixes,  $\delta , s>0$ are constant, $I_d$ is the identity matrix in $\R^{d\times d}$, $\cO$ is a neighborhood of $u_0$ and $\psi$ is an $\R^+\setminus \{ 0\}$-valued measurable function on $\cO$ such that 
 $$\left( \int_0^t \int_{\cO} \psi (u)\, N(ds ,du)\right)^{-1}\in \bigcap_{p\geq 1} L^p (\bbP) . \ \ \ (*)$$\\
Then, for all $t\geq s$  the solution $X_t$ of {\rm(\ref{eq})} admits a density in $C^{\infty} (\R^d )$.
 \end{Pro}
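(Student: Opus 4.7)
The plan is to invoke Theorem~\ref{Thregul}: it suffices to prove that $M_t^{-1} \in \bigcap_{p\geq 1} L^p(\bbP,\R^{d\times d})$, where
\[ M_t := \int_0^t \int_\Xi \bK_{s'}\gamma[c(s',X_{s'^-},\cdot)](u)\bK_{s'}^* \, N(ds',du). \]
Because each integrand matrix is positive semi-definite, integration may be restricted to the region $[0,s]\times\cO$ where the local ellipticity applies. Using the elementary bound $|\bK_{s'}^* v|^2 \geq |v|^2/\|K_{s'}\|^2$ (valid since $K_{s'}\bK_{s'} = I_d$), one obtains for every unit vector $v\in\R^d$
\[ v^* M_t v \;\geq\; \int_0^s\int_\cO \frac{\psi(u)}{(1+|X_{s'^-}|^\delta)\,\|K_{s'}\|^2}\, N(ds',du), \]
so that $\lambda_{\min}(M_t)$ is bounded below by the (scalar) right-hand side.

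Next I would localize with a single stopping time. For a parameter $R>0$, set
\[ \tau_R := s \wedge \inf\{s' \geq 0 : |X_{s'^-}| > R \text{ or } \|K_{s'}\| > R\}. \]
Proposition~\ref{Solution} and the analogous moment estimates for the linear SDE satisfied by $K$ guarantee that $\sup_{s'\leq s}|X_{s'}|$ and $\sup_{s'\leq s}\|K_{s'}\|$ have finite $L^q$-norms for every $q\geq 1$; hence Markov's inequality yields $\bbP(\tau_R < s)\leq C_q R^{-q}$ for any $q$. On $\{\tau_R = s\}$ the preceding bound simplifies to
\[ \lambda_{\min}(M_t) \;\geq\; \frac{1}{R^2(1+R^\delta)}\int_0^s\int_\cO \psi(u)\, N(ds',du). \]

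Combining the two ingredients, for every $\epsilon>0$,
\[ \bbP(\lambda_{\min}(M_t) < \epsilon) \;\leq\; \bbP(\tau_R < s) \;+\; \bbP\!\left(\int_0^s\!\!\int_\cO \psi(u)\, N(ds',du) < \epsilon R^2(1+R^\delta)\right), \]
and hypothesis $(*)$, via Markov's inequality applied to an arbitrary power of the inverse Poisson integral, dominates the second term by $C_q(\epsilon R^{2+\delta})^q$. Balancing with $R = \epsilon^{-1/(3+\delta)}$ renders both summands comparable to $\epsilon^{q/(3+\delta)}$, and letting $q\to\infty$ shows $\bbP(\lambda_{\min}(M_t)<\epsilon)$ decays faster than any polynomial; hence $\lambda_{\min}(M_t)^{-1}\in\bigcap_p L^p$. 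Together with routine upper bounds on $\|M_t\|$ deduced from moments of $X$, $\bK$ and the coefficient $c$, this gives $M_t^{-1}\in\bigcap_p L^p(\bbP,\R^{d\times d})$ and Theorem~\ref{Thregul} concludes. The principal technical obstacle is that $\tau_R$ and the Poisson integral $\int_0^s\!\int_\cO\psi\,N$ are both driven by the same measure $N$ and cannot be treated as independent; the union-bound splitting avoids any attempt to decouple them, at the cost of needing the \emph{all-moments} integrability in $(*)$ rather than a finite one, which is precisely what the hypothesis provides.
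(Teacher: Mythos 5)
Your proposal is correct and its core is the same as the paper's: reduce to Theorem \ref{Thregul}, restrict the Poisson integral to $]0,s]\times\cO$ where the ellipticity holds, and bound the matrix from below by $\frac{\psi(u)}{(1+|X_{s'^-}|^\delta)\|K_{s'}\|^2}$ using $|\bK_{s'}^*v|\geq |v|/\|K_{s'}\|$ together with the all-moments bounds on $\sup_{s'}|X_{s'}|$ and $\sup_{s'}\|K_{s'}\|$. Where you diverge is the last step. The paper simply writes $\Gamma[X_t]\geq V\,W\,I_d$ with $W=\int\!\!\int_\cO\psi\,dN$ and $V^{-1}\in\bigcap_pL^p$, and concludes because $\E[(VW)^{-p}]\leq\E[V^{-2p}]^{1/2}\E[W^{-2p}]^{1/2}$ by Cauchy--Schwarz --- no independence of $V$ and $W$ is needed, so the ``decoupling'' obstacle you flag at the end is not actually an obstacle. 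Your stopping-time/union-bound argument with the balancing $R=\epsilon^{-1/(3+\delta)}$ is a valid, if more laborious, substitute; it proves the same super-polynomial decay of $\bbP(\lambda_{\min}(M_t)<\epsilon)$ and hence the same conclusion (note also that for a symmetric positive-definite matrix $\|M_t^{-1}\|=\lambda_{\min}(M_t)^{-1}$, so your final appeal to upper bounds on $\|M_t\|$ is superfluous). Both arguments share the same mild imprecision as the paper regarding whether $(*)$ should be read with $\int_0^s$ or $\int_0^t$.
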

 \begin{proof} As a consequence of Proposition 11 in \cite{bouleau-denis2}, it just remains to prove that $(\Gamma [X_t ])^{-1}$ belongs to 
 $\bigcap_{p\geq 1} L^p (\bbP, \R^{d\times d} )$. \\
 We have for any vector $v\in\R^d$:
\begin{eqnarray*}
v^\ast \Gamma [X_t ]v&\geq& \int_0^t\int_{\cO} v^\ast \bK_{s} \gamma[c(s
,X_{s-} ,\cdot )]\bK_{s}^{\ast} v\, N (ds, du)\\
&\geq & \int_0^t\int_{\cO} v^\ast \bK_{s}\bK_{s}^{\ast} v \displaystyle\frac{1}{1+|X_{s^-}|^\delta}\psi (u) \, N (ds, du).
\end{eqnarray*}
As it is well known that both $\sup_{s\in [0,T]} |K_s|$ and $\sup_{s\in [0,T]}|X_{s^-}|$ belongs to $\bigcap_{p\geq 1} L^p (\bbP)$ we deduce that there exists 
a random variable $V$ such that  $V^{-1}\in \bigcap_{p\geq 1} L^p (\bbP )$ with
$$ \Gamma [X_t ]\geq V \int_0^t \int_{\cO} \psi (u) \, N(ds,du) I_d .$$
It is now easy to conclude.
\end{proof}
We now give a criterion which ensures that $(*)$ is satisfied:
\begin{Le}\label{Tauber} Consider $\cO$ and $\psi$ as above and assume that there exists $\alpha\in (0,1 )$ such that the limit 
$$ r_1=\lim_{\lambda \rightarrow  +\infty} \displaystyle\frac{1}{\lambda^\alpha} {\int_{\cO} (e^{-\lambda \psi (u)}-1)\, \nu (du)}$$
exits and belongs to  $(-\infty , 0)$ then hypothesis $(*)$ of the previous proposition is fulfilled.
\end{Le}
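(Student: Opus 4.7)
The plan is to exploit the explicit form of the Laplace transform of the compound Poisson variable
\[ S_t \;=\; \int_0^t\int_{\cO}\psi(u)\,N(ds,du) \]
together with the classical representation $x^{-p}=\Gamma(p)^{-1}\int_0^\infty \lambda^{p-1}e^{-\lambda x}\,d\lambda$ valid for $x>0$ and $p>0$. Since $\psi$ is strictly positive on $\cO$ and $\nu(\cO)=+\infty$ (this is forced by the hypothesis, as otherwise $\int_{\cO}(e^{-\lambda\psi}-1)\,d\nu$ is bounded from below, contradicting the prescribed asymptotic with $r_1<0$), the measure $N$ has infinitely many atoms in $[0,t]\times\cO$ almost surely, hence $S_t>0$ a.s. and the representation makes sense.

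First I compute the Laplace transform via the Lévy--Khintchine / Campbell formula: for every $\lambda>0$,
\[ \E\!\left[e^{-\lambda S_t}\right]\;=\;\exp\!\left(\,t\!\int_{\cO}\bigl(e^{-\lambda\psi(u)}-1\bigr)\nu(du)\right). \]
By the assumption of the lemma, for any fixed $\varepsilon>0$ there exists $\lambda_0>0$ such that
\[ \int_{\cO}\bigl(e^{-\lambda\psi(u)}-1\bigr)\nu(du)\;\leq\;(r_1+\varepsilon)\,\lambda^{\alpha}\qquad\forall\,\lambda\geq\lambda_0, \]
and choosing $\varepsilon=-r_1/2>0$ we get a constant $c:=-t(r_1+\varepsilon)>0$ such that
\[ \E\!\left[e^{-\lambda S_t}\right]\;\leq\;e^{-c\lambda^{\alpha}}\qquad\forall\,\lambda\geq\lambda_0. \]
For $\lambda\in[0,\lambda_0]$ we simply bound $\E[e^{-\lambda S_t}]\leq 1$.

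Next I apply Fubini to the identity $S_t^{-p}=\Gamma(p)^{-1}\int_0^\infty\lambda^{p-1}e^{-\lambda S_t}\,d\lambda$, yielding
\[ \E\!\left[S_t^{-p}\right]\;=\;\frac{1}{\Gamma(p)}\int_0^{\infty}\lambda^{p-1}\,\E\!\left[e^{-\lambda S_t}\right]d\lambda
\;\leq\;\frac{1}{\Gamma(p)}\left(\int_0^{\lambda_0}\lambda^{p-1}\,d\lambda+\int_{\lambda_0}^{\infty}\lambda^{p-1}e^{-c\lambda^{\alpha}}\,d\lambda\right). \]
The first integral equals $\lambda_0^{p}/p<\infty$. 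The second integral is finite for every $p>0$ because the stretched exponential $e^{-c\lambda^{\alpha}}$ decays faster than any negative power of $\lambda$ (here only $\alpha>0$ is used, not $\alpha<1$). Thus $\E[S_t^{-p}]<+\infty$ for every $p\geq 1$, which is exactly the statement $S_t^{-1}\in\bigcap_{p\geq 1}L^p(\bbP)$.

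There is no real obstacle: the only point that requires a brief justification is the Campbell formula for the Laplace transform and the observation that the hypothesis forces $\nu(\cO)=\infty$, which secures $S_t>0$ a.s. The role of $\alpha\in(0,1)$ is not used in the proof itself; the constraint $\alpha<1$ reflects the natural range in which such an asymptotic can arise from a Lévy measure satisfying $\int(\psi\wedge 1)\,d\nu<\infty$ (Karamata-type Tauberian regime, as the label \emph{Tauber} suggests), but the inequality that matters for negative moments is only $\alpha>0$.
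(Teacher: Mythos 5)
Your proof is correct, and it takes a genuinely different route from the paper's. The paper also starts from the exponential formula $\E[e^{-\lambda V}]=\exp\bigl(t\int_{\cO}(e^{-\lambda\psi(u)}-1)\,\nu(du)\bigr)$, but then invokes De Bruijn's Tauberian theorem (Bingham--Goldie--Teugels, Thm.\ 4.12.9) to convert the $e^{-c\lambda^\alpha}$ decay of the Laplace transform into a precise small-ball estimate $\log\mathbb{P}(V\leq\varepsilon)\sim r_2\,\varepsilon^{-\beta}$ with $\frac1\alpha=\frac1\beta+1$, from which all negative moments follow. You instead bypass the Tauberian step entirely by writing $\E[V^{-p}]=\Gamma(p)^{-1}\int_0^\infty\lambda^{p-1}\E[e^{-\lambda V}]\,d\lambda$ (Tonelli, since everything is nonnegative) and noting that a one-sided bound $\E[e^{-\lambda V}]\leq e^{-c\lambda^{\alpha}}$ for large $\lambda$ already makes this integral converge for every $p$. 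This is more elementary and self-contained, needs only an upper bound on the asymptotics rather than an exact limit, and, as you correctly observe, uses only $\alpha>0$; the restriction $\alpha\in(0,1)$ is what the De Bruijn theorem requires and is the natural range here, but your argument does not need it. What the paper's approach buys in exchange is the sharp two-sided small-deviation asymptotics for $V$, which is more information than hypothesis $(*)$ requires. Your side remarks are also sound: the hypothesis $r_1<0$ forces $\nu(\cO)=+\infty$ (otherwise the integral is bounded below by $-\nu(\cO)$ and the normalized limit would be $0$), whence $V>0$ a.s.; alternatively, positivity of $V$ is already implied by the decay of the Laplace transform itself.
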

\begin{proof} Set $V=\int_0^t \int_{\cO} \psi (u)\, N(ds ,du)$. Let us first remark that as $\nu (\cO )=+\infty$ and $\psi >0$, $\mathbb{P}(V=0 )=0$. 
The Laplace transform of $V$ is given by
$$ \forall \lambda \geq 0 ,\ \E [e^{-\lambda V}]= {e^{t\int_{\cO} (e^{-\lambda \psi (u)}-1)\, \nu (du)}}.$$
By the De Bruijn's Tauberian Theorem (see Theorem 4.12.9 in \cite{bingham}), we know that this implies 
$$ \lim_{\varepsilon \rightarrow 0} \varepsilon^{\beta}\log ( \mathbb{P}(V\leq \varepsilon ))=r_2,$$
where $\beta$ and $r_2$ satisfy:
$$ \frac{1}{\alpha}=\frac{1}{\beta}+1 \makebox{ and } |\alpha tr_1|^{1/\alpha}=|\beta t r_2 |^{1/\beta }.$$
In other words:
$$\mathbb{P}(V^{-1}\geq x)\underset{ x\rightarrow +\infty}{\sim}e^{\frac{r_2}{x^{\beta}}}.$$
This leads to the result.
\end{proof}
\subsection{Non linear subordination}

We now turn out to an example for which $\Xi$ is infinite dimensional. More precisely, we put\\
\begin{itemize}
\item $\Xi=\R^+\times C_0(\R^+  ;\R^q)$ where $q\in\Ne$ and $W=C_0(\R^+  ;\R^q)$ denotes the set of $\R^q$-valued continuous functions defined on $\R^+$ and vanishing in $0$.
\item $\nu=\tau\times m$ where $m$ is the Wiener measure on $C_0(\R^+  ;\R^q)$ and $\tau$ is a Lévy measure on $\R^+$  associated to a subordinator such that $\tau (\R^+ )=+\infty$. For simplicity, we assume that the support of $\tau$ is included in $[0,T']$ for some $T'>0$. Let us recall that necessarily  $\int_0^{T'} y\tau (dy)<+\infty$.
\item The Dirichlet structure $(\bbd ,e,\gamma)$ is the product of the trivial structure on $L^2 (R^+ ,\tau )$ with  $(\bbd_M ,e_M,\gamma_M)$, the Dirichlet structure on $L^2 (C_0(\R^+  ;\R^q),m)$
associated with the Ornstein-Uhlenbeck operator (see \cite{bouleau-hirsch2}) .
\end{itemize}
As usual,  we denote by $(B_t )_{t\geq 0}$ the coordinates maps on $W$:
$$\forall \omega\in W ,\ B_t (\omega )=\omega_t ,$$
 so that $(B_t)_{t\geq 0}$ is a $q$-dimensional Brownian motion under the probability $m$.\\

 \subsubsection{A very simple example}
Let us now give a basic example which proves that in the case of a non-linear subordination, even if a diffusion  is degenerated, the "subordinated" process $X$ may have a smooth density. We keep the notations above and take $d=q=2$. 
Consider for all $t\geq 0$:
$$X_t =\left(\begin{array}{c}
\int_0^t\int_0^{T'}\int_W B^1_y (\omega) N(ds,dy,d\omega )\\
\frac12  \int_0^t\int_0^{T'}\int_W (B^1_y (\omega) )^2 N(ds,dy,d\omega )\end{array}\right),$$
where $B^1$ denotes the first coordinate of the $2$-dimensional Brownian motion $B$.\\
{ As explained in the remark at the end of this subsection, the process $X$ may be viewed as a {\it non-linear} subordination of the $2$-dimensional diffusion $\zeta=(B^1 ,B^1 )$.}\\ 
 By the functional calculus related to stochastic   integrals, we have for all $t\geq 0$:
 $$\Gamma [X_t ]=\int_0^t \int_\Xi \gamma_M [ B_y^1 ,\frac12 (B_y^1 )^2 ](\omega) N(ds,dy,d\omega ).$$
 It is standard that 
 $$\gamma_M [ B_y^1 ,\frac12 (B_y^1 )^2 ]=\left(\begin{array}{cc}y & yB^1_y \\yB^1_y & y(B^1_y )^2\end{array}\right).$$
Let us now study the matrix $\Gamma [X_t ]$ in order to give a criterion which ensures that $X_t$ admits a smooth density. \\
Let $u=\left(\begin{array}{c}u_1\\u_2\end{array}\right)$ be in $\R^2\setminus\{0\}$. We have
\begin{eqnarray*}
u^*\cdot\Gamma [X_t] \cdot u&=&\int_0^t \int_{\Xi} y(u_1 +B_y u_2 )^2\, N(ds,dy,d\omega).\end{eqnarray*}
By remarking that if $u_1 u_2 \geq 0$, $y<1$ and $B_y \geq \sqrt{y}$ or  if $u_1 u_2 \leq 0$, $y<1$ and  $B_y \leq -\sqrt{y}$ then $(u_1 +B_y u_2 )^2 \geq y(u_1^2 +u_2^2 )$, we deduce easily that 
$$ \Gamma [X_t ]\geq (M_1 \wedge M_2)I,$$
where
$$ M_1 =\int_0^t \int_{\Xi} {\bf{1}}_{\{ y<1\, ,\, B_y \geq \sqrt{y}\}}y^2\,  N(ds,dy,d\omega)\makebox{ and }M_2=\int_0^t \int_{\Xi} {\bf{1}}_{\{ y<1\, ,\, B_y \leq -\sqrt{y}\}}y^2\,  N(ds,dy,d\omega).$$
As clearly $M_1$ and $M_2$ have same law, we only study $M_1$.\\
We first remark that 
$$\int_0^t \int_0^{T'\wedge 1} P(B_y\geq \sqrt{y})\tau(dy)dt =tP(B_1 \geq 1)\int_0^{T'\wedge 1}\tau (dy)=+\infty .$$
Hence, almost surely between times $0$ and $t$, there are infinitely many jumps whose "mark" belongs to the set $\{ (y,w)\in\Xi;\, y\leq 1 ,\, B_y (w)\geq\sqrt{y}\}$. From this, we deduce that $M_1 >0$ almost-surely and that $\Gamma [X_t]$ is invertible which ensures that $X_t$ admits a density.\\
Moreover, as a consequence of Theorem \ref{Thregul} we know that if 
$$M_1^{-1}\in L^p (\Omega) \  \forall p\geq 1$$
then $X_t $ admits a density which belongs to $C^{\infty}_b (\R^2 )$.\\
Here again, as in Lemma \ref{Tauber}, we calculate the Laplace transform of $M_1$:
$$\forall \lambda >0,\  E[e^{-\lambda M_1}]=e^{tP(B_1 \geq 1)\int_0^{T'\wedge 1} (e^{-\lambda y^2}-1)\, \tau (dy)}.$$
This yields:
\begin{Le} Assume that there exists $\alpha\in (0,1 )$ such that the limit 
$$ r_1=\lim_{\lambda \rightarrow  +\infty} \displaystyle\frac{1}{\lambda^\alpha} {\int_0^{T'\wedge 1} (e^{-\lambda y^2}-1)\, \tau (dy)}$$
exits and belongs to  $(-\infty , 0)$ then $X_t $ admits a density which belongs to $C^{\infty}_b (\R^2 )$.
\end{Le}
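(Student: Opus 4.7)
The plan is to verify $M_1^{-1}\in\bigcap_{p\geq 1}L^p(\bbP)$ (and, by the same argument, $M_2^{-1}$); combined with the matrix bound $\Gamma[X_t]\geq(M_1\wedge M_2)I_2$ already established and Theorem \ref{Thregul}, this will give the regularity of the density of $X_t$.

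The starting point is the explicit Laplace transform displayed just before the lemma,
$$\log\E[e^{-\lambda M_1}]=t\,\bbP(B_1\geq 1)\int_0^{T'\wedge 1}(e^{-\lambda y^2}-1)\,\tau(dy),$$
in which the Brownian factor has already been integrated out by scaling. Under the stated assumption, the right-hand side is equivalent as $\lambda\to+\infty$ to $c\,\lambda^\alpha$ with $c:=t\,\bbP(B_1\geq 1)\,r_1\in(-\infty,0)$. I would then replay verbatim the argument of Lemma \ref{Tauber}: applying De Bruijn's Tauberian theorem (Theorem 4.12.9 in \cite{bingham}) with $\beta=\alpha/(1-\alpha)$ yields $\log\bbP(M_1\leq\varepsilon)\sim r_2\,\varepsilon^{-\beta}$ as $\varepsilon\to 0^+$ for some $r_2\in(-\infty,0)$, so $\bbP(M_1^{-1}\geq x)$ decays faster than any polynomial in $x$, giving $M_1^{-1}\in\bigcap_{p\geq 1}L^p(\bbP)$. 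Since $M_1$ and $M_2$ are identically distributed (as noted in the paragraph preceding the lemma), the same conclusion holds for $M_2^{-1}$, and therefore for $(M_1\wedge M_2)^{-1}\leq M_1^{-1}+M_2^{-1}$.

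From $\Gamma[X_t]\geq(M_1\wedge M_2)I_2$ in the order of symmetric positive matrices we read off $\|(\Gamma[X_t])^{-1}\|\leq(M_1\wedge M_2)^{-1}$, so $(\Gamma[X_t])^{-1}\in\bigcap_{p\geq 1}L^p(\bbP;\R^{2\times 2})$. The coefficient $c(s,x,(y,\omega))=(B_y^1(\omega),\tfrac12(B_y^1(\omega))^2)$ does not depend on $x$ and trivially satisfies the hypotheses $(\bar{\mathrm R})$, so $X_t\in(\bbbD^\infty)^2$ by Propositions \ref{Solution} and \ref{FormuleEqA}; Theorem \ref{Thregul} then produces a $C^\infty$ density, and the $C_b^\infty$ refinement follows from the integration-by-parts estimate $\E[|f^{(n)}(X_t)|]\leq\|f\|_\infty\,\E[|Z_n|]$ appearing inside the proof of Proposition \ref{Densite}, which controls every derivative of the density in $L^1(\R^2)$ and hence forces boundedness and continuity of the density together with all of its derivatives. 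The only step with any real content is the Tauberian conversion from Laplace-transform asymptotics to small-ball probabilities, and this is already packaged inside Lemma \ref{Tauber}; the remainder reduces to the explicit Laplace computation recalled above and the comparison of symmetric matrices.
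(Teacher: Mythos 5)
Your proposal is correct and follows essentially the same route as the paper: the explicit Laplace transform of $M_1$, the De Bruijn--Tauberian argument already packaged in Lemma \ref{Tauber} to get $M_1^{-1}\in\bigcap_{p\geq 1}L^p(\bbP)$, the equality in law of $M_1$ and $M_2$ together with the bound $\Gamma[X_t]\geq (M_1\wedge M_2)I$, and finally Theorem \ref{Thregul} (via the integration-by-parts estimate of Proposition \ref{Densite}). Your explicit handling of $(M_1\wedge M_2)^{-1}\leq M_1^{-1}+M_2^{-1}$ and of the passage from $C^\infty$ to $C^\infty_b$ only makes precise what the paper leaves implicit.
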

\noindent{\bf Remark:} The hypothesis of the Lemma is fulfilled if for example 
$\tau (dy)=\frac{1}{y^{1+\epsilon}} dy $ 
with $\epsilon\in (0,1)$.

\subsubsection{Non-linear subordination of a diffusion}
We can generalize the previous example and consider that $X$ is the solution of the following equation
$$
X_t =x_0+\int_0^t \int_\Xi c(s,X_{s^-},y,\omega)\tN (ds,dy,d\omega )+\int_0^t \sigma
(s,X_{s^-})dZ_s
$$
where the semimartingale $Z$ and the coefficient $\sigma$ satisfy the same assumptions as in the previous Section.
The coefficient $c$ is defined thanks to a diffusion process $\zeta$ and is of the form
$$c(t,x,y,\omega )=F(t,x,y, \zeta_y^x -x).$$ Let us now give more details: we first consider the diffusion process $\zeta$ depending on the parameter $x\in\mathbb{R}^d$  and solution of the following SDE:
\begin{equation}\label{eqzeta}\zeta^{x}_t=x+\int_0^ta(\zeta^{x}_s)\,dB_s(w)+\int_0^tb(\zeta^{x}_s)\,ds,\end{equation}
where $a:\, \R^{d\times q}\rightarrow \R^d$ is assumed to belong to $C_K^{\infty} (\R^{d\times q})$, the set of  infinitely differentiable functions with compact support, and $b:\, \R^d \rightarrow \R^d$ is assumed to be bounded, infinitely differentiable with bounded partial derivatives of all orders.\\
Let us remark that, as shown by the basic example above,  these assumptions are not necessary, nevertheless they ensure that hypotheses (R) are fulfilled as we shall see now. \\
It is well known (see \cite{kunita}, \cite{bouleau-hirsch2} Section IV.4 or \cite{denis2}) that we can choose a version of $\zeta$ that we still denote $\zeta$ such that
\begin{enumerate}
\item For almost all $\omega \in W$, for all $t\geq 0$, $x\longrightarrow \zeta_t^x (\omega )$ is a $C^{\infty}$-diffeomorphism from $\R^d$ onto $\R^d$.
\item For almost all $\omega \in W$, for all $\alpha \in \N$, the map $(t,x)\in \R^+ \times \R^d \longrightarrow D^\alpha_x \zeta_t^x (\omega )$ is continuous.
\item For all $t\geq 0$ and all $x\in\R^d$, $\zeta_t^x$ belongs to $(\bbd_M^\infty )^d$.
\item The map $(t,x)\in \R^+ \times \R^d \longrightarrow \zeta_t^x \in (\bbd_M^\infty )^d$ is continuous.
\end{enumerate}
Moreover, as $a$ belongs to $C_K^{\infty} (\R^{d\times q})$ and $b$ is bounded, it is clear that there exists a constant $C>0$ such that
$$\sup_{t\in [0,T], x\in\R^d}| \xi_t^x -x|\leq C \ \makebox{a.e.}$$
Even more, there exists a constant $R>0$ such that if $|x|\geq R$, then $\zeta_t^x$ is deterministic for all $t\in [0,T']$ and satisfies 
\begin{equation}\label{deterministic}\forall t\in [0,T'],\ \zeta_t^x =x +\int_0^t b(\zeta_s^x) \, ds.\end{equation}
For all $t\geq 0$ and all $x\in \R^d$ we denote $\gamma_{M,t}^x =\gamma_M [\zeta_t^x]$, the Malliavin carré du champ of the random variable $\zeta_t^x$.   
We know (see \cite{ikeda-watanabe} or \cite{bouleau-hirsch2}) that
\begin{equation}{\label{eqgammaM}}
\forall t\geq 0 \, \forall x\in\R^d ,\ \gamma_{M,t}^x=M_t \int_0^t M^{-1}_t a(\zeta_t^x )a^*(\zeta_t^x )(M_s^{-1})^*\, dsM_t^*,
\end{equation}
where $(M_t )_{t\geq 0}$ and $(M_t^{-1} )_{t\geq 0}$ satisfy the following SDE's:
$$M_t =I+\sum_{j=1}^q \int_0^t a'_{\cdot ,j} (\zeta_s^x)M_s \, dB^j_s +\int_0^t b' (\zeta_s^x )M_s\, ds$$
and
$$M_t^{-1} =I-\sum_{j=1}^q \int_0^t M_s^{-1} a'_{\cdot ,j} (\zeta_s^x) \, dB^j_s +\int_0^t\sum_{j=1}^q \left((M_s^{-1} a'_{\cdot ,j} (\zeta_s^x ))^2-M_s^{-1}b' (\zeta_s^x )\right)\, ds.$$
Here, $a'_{\cdot ,j}$ denotes the Jacobian matrix of $a_{\cdot ,j}$, the $j$-th column of $a$ and $b'$ the Jacobian matrix of $b$.\\
For $n\in\N$ and $p\geq 1$, in a natural way we denote by $\bbd_M^{n,p}$ the usual Sobolev space of the Malliavin calculus: the space of elements in $L^p (W,m)$ which are $n$ times differentiable in the sense of Malliavin with $p$-integrable derivatives. Let $\alpha \in\N$, by studying the SDE satisfied by $D^{\alpha}_x \zeta^x$, it is quite standard to get the following estimates (see \cite{denis2} for example):
$$\forall s,t\geq 0 ,\, \forall x,x'\in\R^d ,\ \parallel D^\alpha_x \zeta^x_t -D^\alpha_x \zeta^{x'}_s \parallel_{\bbd^{n,p}_M} \leq C_{n,p}\left(1+|x|^n\right)\left(|x-x'|+|t-s|^{1/2}\right),$$
where $C_{n,p}$ is a constant.\\
We set 
$$ \pi (t,x)=\zeta^x_t -x .$$
Applying the Kolmogorov criterion w.r.t. the variable $x$ to the malliavin derivatives of $\zeta_t^x$, we get from this the following estimate:
$$\forall t\geq 0 ,\ \sup_{|x|\leq R} \| D^\alpha_x\pi (t,x)\|_{\bbd^{n,p}_M}\leq C'_{n,p} t^{1/2},$$
where $C'_{n,p}$ is a constant depending on $n,p,R$. Since for $|x|\geq R$, $\zeta^x$ satisfies \eqref{deterministic}. So that, we have the following estimate:
$$\forall t\geq 0 ,\ \sup_{x\in\R} \| D^\alpha_x\pi (t,x)\|_{\bbd^{n,p}_M}\leq C''_{n,p} t^{1/2},$$
where $C''_{n,p}$ is another constant. This yields
\begin{equation}\label{estimezeta}\forall \alpha\in\N,\, \forall n\in\N ,\, \forall p\geq 1 ,\ \int_0^{T'}  \sup_{x\in\R} \| D^\alpha_x\pi (t,x)\|_{\bbd^{n,p}_M}^p \, \tau (dt)<+\infty.\end{equation}
Consider now a function 
$$\begin{array}{cccc}
F: &\R^+ \times \R^d\times [0,T'] \times \R^d &\longrightarrow& \R^d\\
&(t,x,y,z)&\longmapsto& F(t,x,y,z)\end{array}$$
and we assume that for all $t,x,y$, $F(t,x,y,0)=0$, that for all $t,y\geq 0$, $(x,z)\rightarrow F(t,x,y,z)$ is infinitely differentiable and 
$$\forall \alpha \in\N,\ \forall \beta\in\N^* ,\ \sup_{t,x,y,z}| D_x^\alpha D_z^\beta F(t,x,y,z)|<+\infty.$$
We put
$$ \forall (t,x,y,\omega)\in\R^+\times \R^d \times \R^+\times W ,\ c(t,x,y,w)=F(t, x,y,\zeta^{x}_{y} (\omega )-x).$$
As a consequence of the estimate \eqref{estimezeta} and of the boundedness of the the derivatives of $F$, it is clear that $c$ satisfies conditions 1.a), 1.b) and 1.c) of the set of hypotheses (R).\\
It remains to verify hypothesis 1.d). We have for all $(y,\omega)\in\Xi$, all $t\in [0,T]$ and all $x\in\R^d$:
$$ I+D_x c(t,x,y,\omega )=I+D_x  F(t,x,y,\pi^x_y (w)) +D_z F(t,x,y,\pi^x_y (w))\cdot D_x \pi_y^x (w),$$
from this we can check if hypothesis 1.d) is satisfied on concrete examples.\\
Then we are able to make all the calculations:
\begin{eqnarray*}
\Gamma [X_t ]&=&K_t  \int_0^t \int_\Xi\bK_{s} \gamma_M[F(t,X_{s^-},y,\zeta^{X_{s^-}}_y -X_{s^-}) ](w)\bK_{s}^{\ast}\, N (ds, dy,d\omega)
K_t^{\ast}\end{eqnarray*}
and we know that for all $x\in\R^d$:
\begin{eqnarray*}
\gamma_M[F(t,x,y,\zeta^{x}_y -x) ](w)&=&F'(t,x,y,\zeta^{x}_y -x )\times\gamma_M [\zeta^{x}_y ]\times[F'(t,x,y,\zeta^{x}_y -x )]^*,
\end{eqnarray*}
and $\gamma_M [\zeta^{x}_y ]$ is given by equation \eqref{eqgammaM}.\\

{\bf{Remarks:}}\\
1- To understand why we call this example ``non linear subordination'', consider the following case. Take for $c$
$$c(t,x,y,w)=\zeta^{x}_{y} (\omega )-x,$$
and $\sigma=0$. Then 
$$\forall t\geq 0,\ X_t =x_0 +\int_0^t \int_{\Xi}(\zeta^{X_{s^-}}_{y} (\omega )-X_{s^-})N (ds, dy,d\omega ).$$
One can easily verify that in this case, the law of $X_t$ is the law of the diffusion $\zeta$ starting from $x_0$ and subordinated by a subordinator whose Lévy measure is $\tau$. In other words, $(X_t )_{t\geq 0}$ has the same law as $(\zeta^{x_0}_{Y_t})_{t\geq 0}$ where $Y$ is a subordinator independent of $B$ whose Lévy measure is $\tau$.\\
2- The example given first corresponds to the case where the coefficient $\sigma$ is constant and equal to $\sigma =\left(\begin{array}{c} 0\\ \frac12 \int_0^{T'}y\tau (dy)\end{array}\right)$, $dZ_t = dt$ and $$ F(t,x,y,z)=\left(\begin{array}{c}
 z_1 \\
\frac12 z_2^2
\end{array}\right).$$
3- The case we consider here is not the most general possible, indeed we can deal with more sophisticated examples without any difficulty. For example, in equation $\eqref{eqzeta}$ we can consider the case where 
coefficients depend on time $s$ and even by considering one more time a product Dirichlet structure, we can replace the Lebesgue measure $``ds"$ by $``d\tilde{Z}_s"$ where $\tilde{Z}$ is a continuous semimartingale independent of $B$ and satisfying good integrability conditions. We can also consider the case where $F$ is random.\\

\subsection{ Diffusive particle subjected to a L\'evy field of force}
We end by a more sophisticated example which, in some sense, may be viewed as a generalization of the previous one. The aim of this last subsection is to give an idea of what our technic could bring to this kind of example, so that we do not give all the details. \\
Let  $\Xi=\mathbb{R}^2\times\mathcal{C}(\mathbb{R}_+,\mathbb{R}^2)$ equipped with the measure $\nu\times m$ where  $\nu$ is a L\'evy measure and $m$ the Wiener measure with starting point zero.

Let us consider a Poisson random measure  $N(dt,du)$ on $\mathbb{R}_+\times \Xi$ with intensity measure $dt\times\nu\times m$.

A current point of $\mathbb{R}^2$ is represented by $re^{i\theta}$. A Dirichlet form is put on the argument $\theta$ and the Ornstein-Uhlenbeck form is put on the Brownian motion what defines the bottom space by product.

We consider the following SDE 
\begin{equation}\label{sde A1}dX_t=c(X_{t-},u)\;\tilde{N}(dt,du)\qquad X_0=x_0\in \mathbb{R}^2\end{equation} where the function  $c(x,u)$ is defined as follows:

We are given a field $\upsilon(x)$ of $2\times2$ matrices. For $u=(re^{i\theta},\omega)\in \Xi$  we consider the SDE
\begin{equation}\label{sde A2}Z_s^x(\theta)=\int_0^s\upsilon(Z_\tau^x(\theta)+x)\,dB_\tau(\omega)+s\left(\begin{array}{c}
\cos\theta\\
\sin\theta
\end{array}
\right)\end{equation}
during the time $r$, and we put
$c(x,u)=Z^x_{r}(\theta).$ In other words
$$c(X_{t-},u)=Z^{X_{t-}}_{r}(\theta).$$
Let us remark that the process $X_t$ in the case $\upsilon\equiv 0$ is the centered L\'evy process associated with the L\'evy measure $\nu$ :
$$Y_t=Y_0+\int_0^t re^{i\theta}\tilde{N}(ds,du)$$ whose jumps give the drift in equation (\ref{sde A2}). Thus the process $X_t$ may be seen as modeling a particle diffusing with matrix $\upsilon$ subjected to a varying field of forces given by the L\'evy process $Y_t$.

\begin{center}
\includegraphics[width=12cm]{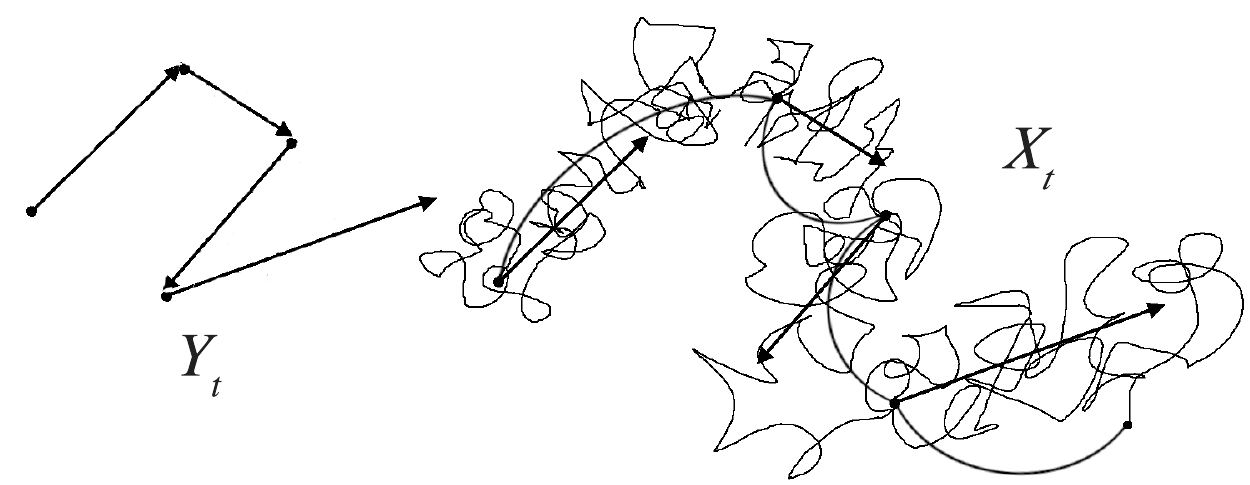}\\
figure 1: The jumps of the particle $X_t$ are the paths of a diffusion particle whose drift is governed by a L\'evy process.\\
\end{center}
Using the results of Bismut  on stochastic flows (see \cite{bismut}) and similarly to the previous example, putting strong regularity assumptions on $\upsilon$ we can 
verify that hypotheses (R) are satisfied and calculate the matrix $\Gamma [X_t]$ in order to apply our criteria.

 Ecole des Ponts,\\
ParisTech, Paris-Est\\
6 Avenue Blaise Pascal\\ 77455 Marne-La-Vallée Cedex 2
FRANCE\\bouleau@enpc.fr \\  \\ Equipe Analyse et Probabilités,
\\Universit\'{e} d'Evry-Val-d'Essonne,\\Boulevard François Mitterrand\\
91025 EVRY Cedex FRANCE\\ldenis@univ-evry.fr
 \end{document}